\newtheorem{thm}{Theorem}
\newtheorem{lem}[thm]{Lemma}
\newtheorem*{lem*}{Lemma}
\newtheorem{corl}{Corollary}
\newtheorem{defn}{Definition}
\title{Numerical Analysis of a Hybrid Method for Radiation Transport%
\thanks{This material is based upon work supported by the U.S. Department of Energy, Office of Science, Office of Advanced Scientific Computing Research, as part of their Applied Mathematics Research Program. The work was performed at the Oak Ridge National Laboratory, which is managed by UT-Battelle, LLC under Contract No. De-AC05-00OR22725. The United States Government retains and the publisher, by accepting the article for publication, acknowledges that the United States Government retains a non-exclusive, paid-up, irrevocable, world-wide license to publish or reproduce the published form of this manuscript, or allow others to do so, for the United States Government purposes. The Department of Energy will provide public access to these results of federally sponsored research in accordance with the DOE Public Access Plan (http://energy.gov/downloads/doe-public-access-plan).
}} 
\author{Cory, Victor, }
\date{\today}                                                                                              \author{Andr\'es Galindo-Olarte\thanks{Department of Mathematics, Michigan State University, East Lansing, MI 48824, USA 
}
\and Victor P.\ DeCaria\thanks{Computer Science and Mathematics Division, Oak Ridge National Laboratory, Oak Ridge, TN 37831, USA
}
\and Cory D.\ Hauck\thanks{Computer Science and Mathematics Division, Oak Ridge National Laboratory, Oak Ridge, TN 37831, USA and 
Mathematics Department, University of Tennessee, Knoxville, TN 37996, USA 
} 
}
\begin{document}
\maketitle
\begin{abstract}
    In this work, we prove rigorous error estimates for a hybrid method introduced in \cite{hauck2013collision} for solving the time-dependent radiation transport equation (RTE).  The method relies on a splitting of the kinetic distribution  function for the radiation into uncollided and collided components. A high-resolution method (in angle) is used to approximate the uncollided components and a low-resolution method is used to approximate the the collided component. After each time step, the kinetic distribution is reinitialized to be entirely uncollided.  For this analysis, we consider a mono-energetic problem on a periodic domains, with constant material cross-sections of arbitrary size.  To focus the analysis, we assume the uncollided equation is solved exactly and the collided part is approximated in angle via a spherical harmonic expansion ($\Peqn$ method). Using a non-standard set of semi-norms, we obtain estimates of the form $C(\e,\sigma,\dt)N^{-s}$ where $s\geq 1$ denotes the regularity of the solution in angle, $\e$ and $\sigma$ are scattering parameters, $\dt$ is the time-step before reinitialization, and $C$ is a complicated function of $\e$, $\sigma$, and $\dt$.  These estimates involve analysis of the multiscale RTE that includes, but necessarily goes beyond, usual spectral analysis. We also compute error estimates for the monolithic $\Peqn$ method with the same resolution as the collided part in the hybrid. Our results highlight the benefits of the hybrid approach over the monolithic discretization in both highly scattering and streaming regimes.
\end{abstract}


\section{Introduction}
The radiation transport equation (RTE) \cite{pomraning2005equations,lewis1984computational,case1967linear} describes the movement of particles through a material medium by means of a kinetic distribution function that gives the density of particles with respect to the local phase space measure.  In a general setting, the phase space is six dimensional:  three dimensions for particle position and three for particle momentum, the latter of which is typically decomposed into energy and direction (or angle) of flight.  Thus in the time-dependent setting, the RTE is defined over a seven-dimensional domain.

The RTE describes two basic processes: particle advection and interactions with the material medium.   These interactions can be of various types and include scattering and emission/absorption processes.  The rate at which these processes occur is determined by the properties of the material, expressed via cross-sections.  Material cross-sections may vary in space and depend on the particle energy and, in situations that the material evolves, the cross-sections may evolve as well.  When cross-sections vary significantly, the RTE may exhibit multiscale behavior.  It is the combination of this multiscale behavior with the high-dimensional phase space that makes simulating the RTE a challenging task.

A well-known multi-scale feature of the RTE is the diffusion limit.  In regions where the scattering cross-section is large, the solution of the RTE can be accurately approximated by its angular average \cite{larsen1974asymptotic,bensoussan1979boundary}. Moreover this average is well-approximated by the solution of a diffusion equation.  This solution to diffusion equation has long been used as a cheap approximation the solution to the RTE in scattering dominated regimes.

Another common limit is the absorption limit, which is characterized by a complete lack of scattering.  In this case, the RTE does not have a simple asymptotic approximation.  However, due the abscence of scattering, there is no coupling between the angles and energies of the kinetic distribution.  Thus, with a proper discretiation, the RTE solution can be easily parallelized.

In problems for which the scattering cross-section varies dramatically, both of the limits above can exist simultaneously, along with a range of transition regimes in between. A consequence of this fact is that a monolithic numerical treat of the RTE will require many degrees of freedom that are strongly coupled.  In practice, the time-dependent RTE is often updated in time with an implicit scheme.  In such cases, designing the linear solvers can be a challenge.

A variety of approaches have been proposed for addressing the multiscale challenges posed of the RTE.  These include micro-macro decompositions \cite{lemou2008new}, high order-low order (HOLO) methods \cite{chacon2017multiscale}, diffusion-based acceleration \cite{lewis1984computational, adams2002fast}, and preconditioned Krylov approaches \cite{warsa2004krylov}.  In the current paper, we consider a hybrid formulation \cite{hauck2013collision} that is based on the notion of first-collision source \cite{alcouffe2006first}.  In this hybrid formulation, the RTE is split into two components:  an uncollided component that tracks the particles up to point of their first material interaction and a collided component that track the particles that remain.  The resulting system is then approximated with two different angular discretizations:  a high-resolution discretization for the uncollided equation and a low-resolution for the collided equation.  The intuition that drives this strategy is that scattering produces a smoother solution; hence the collided equation should require less resolution to recover an accurate solution.  The uncollided equation, on the other, requires higher resolution; however it takes the form of a purely absorbing RTE and can therefore be solved much more efficiently the original RTE using the same number of degrees of freedom.   The efficiency of the hybrid approach for the RTE has been demonstrated in several papers \cite{crockatt2017arbitrary, crockatt2019hybrid, crockatt2020improvements}, including generalizations to hybrid energy discretizations \cite{Whewell2022} and hybrid spatial discretizations \cite{heningburg2020hybrid}.   

A key component of the hybrid implementation for the time-dependent RTE is a relabeling procedure that, after a given time step, maps the collided numerical solution into the space of the uncollided numerical solution and then uses the sum to re-initialize the uncollided equation.  Meanwhile, the collided equation is re-initialized to zero.  This relabeling step is critical, since otherwise the hybrid numerical solution would eventually convergence to a low-resolution numerical solution of the collided equation.

Despite the intuitive motivation of the hybrid and the success of the hybrid approach in numerical simulations, the method still lacks rigorous justification.  This is due in part to complications introduced by the multiscale behavior of the RTE.  For example, spectral approximations of the RTE in angle are fairly straightforward to analyze \cite{frank2016convergence}, but a multiscale analysis that takes into account the degree of scattering is significantly more complicated \cite{chen2019multiscale}.  The relabeling step of the hybrid formulation complicates the situation even further.

In the current paper, we take a first step in analyzing the hybrid method for the time-dependent mono-energetic version of the RTE with isotropic scattering.  We focus only on the angular discretization of the RTE, comparing the standard spectral approximation ($\Peqn$) for the full system with a discretization of the hybrid that features a spectral approximation of the same resolution for the collided equation but assumes an exact solution for the uncollided equation.  Clearly, the hybrid formulated in this way is more expensive than the monolithic approach.  Thus the goal of the analysis is determine what is gained from the extra work involves in a high-resolution simulation of the uncollided equation, which in practice is computed with a high-fidelity collocation method or with a Monte-Carlo method.

The remainder of this paper is organized as follows.  In Section 2, we introduce the RTE, reduce it to the purely scattering problem, recall the $\Peqn$ method, and then describe the setup of the hybrid.  Having established the setting of of the problem, we then summarize the main results of the paper.  In Section 3, we derive error estimates for the $\Peqn$ equations.  In Section 4, we analyze the hybrid problem.   In Section 5, we generalize results back to the original RTE with non-zero absorption.  In Section 6, we provide a short conclusion.  The appendix contains some generic results used for the estimates of the main paper.

\section{Background}
\label{sec:background}
\subsection{The radiation transport equation}
We consider a time-dependent transport equation with periodic boundaries, isotropic scattering, unit-speed particles, and diffusion scaling:
 \begin{subequations}   \begin{align}
       &\e\partial_t \Psi^{\veps}+\Omega\cdot\nabla_x \Psi^{\veps}+\frac{\sigt}{\e}\Psi^{\veps}=\left(\frac{\sigt}{\e} - \e\siga\right)\overline{\Psi^{\veps}}+\e Q, \qquad \overline{\Psi^{\veps}}=\frac{1}{4\pi}\int_{\sphe}\Psi^{\veps}\,d\Omega, \\
        &\left.\Psi^{\veps}\right|_{t=0}=g.
    \end{align}    	\label{eq:transp}%
\end{subequations}
Here $\Psi^{\veps}= \Psi^{\veps}(x,\Omega,t)$ is a function of position $x\in X=[0,2\pi)^3$, direction of flight $\Omega\in\sphe$, and time $t>0$.  It can be interpreted physically as the density of particles at time $t$ with respect to the measure $d\Omega dx$.  Particles interact with a material background characterized by an absorption cross-section $\siga \geq 0$, total cross-section $\sigt \geq \siga$ (which accounts for scattering and absorption), and a known source $Q=Q(x,\Omega,t)$. The quantity $\sigt - \veps^2 \siga$ is the scaled scattering cross-section, where the non-dimensional parameter $\varepsilon > 0$ characterizes the strength of the scattering as well as the relevant time scale.  Indeed, it is well-known \cite{larsen1974asymptotic} that in the limit $\veps \to 0$, $\Psi^{\veps} \to \Psi^0$ where $\Psi^0$ is independent of angle and satisfies the diffusion equation of the form
 \begin{subequations}   
 \begin{align}
       &\p_t \Psi^0 - \nabla_x \cdot \left( \frac{1}{3\sigt}  \nabla_x \Psi^0 \right) + \siga \Psi^0 =0, \qquad \left.\Psi^0\right|_{t=0}= \frac{1}{4\pi}\int_{\sphe}g\,d\Omega .
    	\label{eq:diff_limit}
    \end{align}
    \end{subequations}
For $g \in L^2(X \times \sphe)$, $\sigt, \siga \in L^\infty(X)$, $Q \in L^2(X \times \bbS^2 \times [0, \infty))$ \eqref{eq:transp}, is known to have a semi-group solution $\Psi^{\veps} \in C([0,\infty); L^2(X \times \sphe) )$ \cite[Theorem XXI.2.3]{dautray1999mathematical}. If in addition, $\Omega \cdot \nabla_x g \in L^2(X \times \sphe)$, then $\Psi^{\veps} \in C^1([0,\infty); L^2(X \times \sphe) )$.  We assume this is the case for remainder of the paper.

In order to facilitate a clear stability and error analysis, we assume that the cross-sections $\siga$ and $\sigt$ are constant in space. 
This assumption on $\siga$ allows us to convert \eqref{eq:transp} to a purely scattering system for the function $\psi = e^{\siga t}\Psi^{\veps}$:%
\footnote{To reduce notation, we suppress the dependence of $\psi$ on $\veps$.}
\begin{subequations}
\label{eq:transp_simple}
\begin{align}
&\e\partial_t \psi + \Omega \cdot \nabla_x \psi + \frac{\sigma}{\e}\psi = \frac{\sigma}{\e}\overline{\psi} + \e q
, \qquad  \overline{\psi}=\frac{1}{4\pi}\int_{\sphe}\psi\,d\Omega, \\
        &\left.\psi\right|_{t=0}=g,
    \end{align}
\end{subequations}
where $q = e^{\siga t}Q$ and $\sigma := \sigt$.
 Henceforth, we focus our analysis on \eqref{eq:transp_simple}. The results can then be translated back to the case of non-zero absorption by undoing the transformation, which gives exponential decay if $\siga > 0$.  This assumption is made for simplicity, but it does introduce a measure of regularity into the solution that is not typical in applications.  Indeed, a more reasonable assumption is that the cross-sections are piece-wise smooth and that the boundaries are equipped with inflow data.  Hence the analysis here can be viewed as a localized proxy for a more realistic scenario. A more sophisticated analysis to include boundary and interior layers is the subject of future work.

\subsection{The \texorpdfstring{$\Peqn$}{PN} approximation}

Given $N\in\mathbb{N}_{\geq 0}$, the $\Peqn$ method is a spectral discretization of the transport equation with respect to the angular variable $\Omega$.  Let $\{m_{\ell,k}\}_{\ell,k}$ be the real-valued, orthonormal basis of spherical harmonics, where $\ell \geq 0$ denotes the degree and $k \in\{ -\ell , \dots, \ell\}$ denotes the order. For any $u \in L^2(\sphe)$, the angular moment $u_{\ell,k}$ is given by
\begin{equation}
    u_{\ell,k} = \int_{\sphe} m_{\ell,k} u \,d\Omega.
\end{equation}
For convenience, we collect the basis elements of degree $\ell$ into vectors $\M[\ell] = (m_{\ell,-\ell},\dots,m_{\ell,\ell})^\Tr$, and we denote by $\bu_\ell = (u_{\ell,-\ell}, \dots, u_{\ell,\ell})^\Tr$ the vector of corresponding moments. Let $\mathbb{P}_N(\sphe)\subset L^2(\sphe)$ to be the span of all spherical harmonics with degree at most $N$.  Then the orthogonal projections $\cP_N \colon L^2(\sphe) \rightarrow \mathbb{P}_N(\sphe)$ and $\widetilde{\cP}_N \colon L^2(\sphe) \rightarrow \mathbb{P}_N(\sphe)$ are given by
\begin{equation}
\label{eq:projection}
\cP_N u = \sum_{\ell = 0}^N \M[\ell]^\Tr \bu_\ell=  \sum_{\ell = 0}^N \sum_{k = -\ell}^{\ell} m_{\ell,k}
u_{\ell,k}
\quand
\widetilde{\cP}_Nu=(\cI-\cP_N)u = \sum_{\ell = N+1}^\infty \M[\ell]^\Tr \bu_\ell=  \sum_{\ell = N+1}^\infty \sum_{k = -\ell}^{\ell} m_{\ell,k},
\end{equation}
where $\cI$ is the identity operator.

The $\Peqn$ approximation of \eqref{eq:transp_simple} seeks a function
\begin{equation}
 \psi^N(x,\Omega,t) = \sum_{\ell=0}^{N} \bmm_\ell^\Tr(\Omega) {\bm \uppsi}^N_\ell(x,t) = \sum_{\ell=0}^{N} \sum_{k=-\ell}^{\ell} m_{\ell,k}(\Omega) {\bm \uppsi}^N_{\ell,k}(x,t)
\end{equation}
 such that 
 \begin{subequations}
\begin{align}\label{eqn:pn_approx}
&    \e\partial_t\psi^{N}+\cP_N (\Omega\cdot\nabla_x\psi^{N})+\frac{\sigma}{\e}\psi^{N}=\frac{\sigma}{\e}\overline{\psi^{N}}+\e\cP_N q, \qquad \overline{\psi^{N}}=\frac{1}{4\pi}\int_{\sphe}\psi^N\,d\Omega,\\
& \psi^N|_{t=0} = \cP_N g.
 \end{align}
  \end{subequations}
When expressed in terms of the moments ${\bm \uppsi}^N_\ell$, the $\Peqn$ method yields the following linear, symmetric hyperbolic system:
\begin{subequations}
\label{eqn:pn_system}
\begin{flalign}
\e\partial_t \UN[0] & \,  &&+ \sum_{i=1}^3 \ali{1}{i}\partial_{x_i}\UN[1] &   &= \e\q_0, &&\text{for }\ell=0,
\\
\e\partial_t \UN[\ell]  &+ \sum_{i=1}^3  \alit{\ell}{i} \partial_{x_i}\UN[\ell-1] &&+ \sum_{i=1}^3  \ali{\ell+1}{i} \partial_{x_i}\UN[\ell+1]  &+ \frac{\sigma}{\e}\UN[\ell]&= \e\q_\ell, &&\text{for }1\leq \ell \leq N-1,
\\
\label{eqn:pn_system_N}
\e\partial_t \UN[N] &+ \sum_{i=1}^3 \alit{N}{i} \partial_{x_i}\UN[N-1] && \,  &+ \frac{\sigma}{\e}\UN[N]&= \e\q_N, &&\text{for } \ell = N.
\end{flalign}
\end{subequations}
Formulas for the elements in the matrices $\ali{\ell}{i} \in \mathbb{R}^{(2\ell-1)\times(2\ell+1)}$ can be found in the appendix of \cite{frank2016convergence}.  In the current work, we rely only on the fact the they are bounded in the operator norm, specifically that $\|\ali{\ell}{i}\|_2 \leq 4$.  

The exact moments $\U[\ell] = \int_{\bbS^2}\bmm_\ell \psi d \Omega$  satisfy an infinite system of equations with a structure similar to \eqref{eqn:pn_system}:
\begin{flalign}\label{eqn:pn_system_infinite}
\e\partial_t \U[0] & \,  &&+ \sum_{i=1}^3 \ali{1}{i}\partial_{x_i}\U[1] &   &= \e\q_0, &&\text{for }\ell=0,
\\
\e\partial_t \U[\ell]  &+ \sum_{i=1}^3  \alit{\ell}{i} \partial_{x_i}\U[\ell-1] &&+ \sum_{i=1}^3  \ali{\ell+1}{i} \partial_{x_i}\U[\ell+1]  &+ \frac{\sigma}{\e}\U[\ell]&= \e\q_\ell, &&\text{for } \ell \geq 1.
\end{flalign}
In particular, the $\Peqn$ equations for $\UN$ can be obtained by truncating \eqref{eqn:pn_system_infinite} at $\ell= N$ and then neglecting the moment $\UN[N+1]$ that would otherwise appear in \eqref{eqn:pn_system_N}.


\subsection{The hybrid method }
The hybrid method is based on a separation of $\psi$ into a \textit{collided component} $\psic$ and an \textit{uncollided component}  $\psiu$.  These components satisfy the coupled system
\begin{subequations}
\label{eq:hybrid}
\begin{align}
\e\partial_t \psiu + \Omega \cdot \nabla_x \psiu + \frac{\sigma}{\e}\psiu &= \e q, \\
 \e\partial_t \psic + \Omega \cdot \nabla_x \psic + \frac{\sigma}{\e}\psic &= \frac{\sigma}{\e} \psicbar + \frac{\sigma}{\e} \psiubar,
\end{align} 
\end{subequations}
where, as before, a bar denotes the angular average of $\sphe$.
The idea of the hybrid is to solve \eqref{eq:hybrid} using a high-resolution angular discretization for $\psiu$ and a low-resolution angular discretization for $\psic$ over a time step $\dt=T/M$, where $M\in\mathbb{N}_{>0}$ and then perform a reconstruction to reinitialize  $\psiu$ and $\psic$ for the next time step.  To formalize this procedure, define a set of temporal grid points $0=t_0 < t_1 < ...<t_{M}=T$, and for $m \in \{1,2,\dots,M\}$, let $f(t_{m}^-) = \lim_{\delta \to 0^+} f(t_{m} - \delta)$ for any function $f$ of $t$ that is continuous on $[t_{m-1},t_{m})$.  Then for $m \in \{1,2,\dots,M\}$, $\psi_{{\mathrm{u}},m}$ and $\psi_{{\mathrm{c}},m}$ satisfy the following system of equations over the interval $[t_{m-1},t_{m})$
\begin{subequations}
  \label{eq:hybrid_intervals}
  \begin{align}
    &\e\partial_t\psi_{{\mathrm{u}},m}+\Omega\cdot\nabla_x\psi_{{\mathrm{u}},m}+\frac{\sigma}{\e}\psi_{{\mathrm{u}},m}=\e q, \label{eq:hybrid_intervals_u} \\
    &\e\partial_t\psi_{\mathrm{c},m}+\Omega\cdot\nabla_x\psi_{\mathrm{c},m}+\frac{\sigma}{\e}\psi_{\mathrm{c},m}=\frac{\sigma}{\e}(\overline{\psi_{\mathrm{u},m}}+\overline{\psi_{\mathrm{c},m}}),
    \label{eq:hybrid_intervals_c}\\
    &\left.\psi_{\mathrm{u},m}\right|_{t=t_{m-1}}=
   \begin{cases}
   g, &\quad m = 1, 
   \\
   \psi_{\mathrm{u},m-1}(t_{m-1}^{-}) + \psi_{\mathrm{c},m-1}(t_{m-1}^{-}) &\quad m > 1.
   \end{cases}   \label{eq:relabel_u} \\
    &\left.\psi_{\mathrm{c},m}\right|_{t=t_{m-1}}=0.  \label{eq:relabel_c}
    \end{align}
\end{subequations}
The intuition behind this splitting is that \eqref{eq:hybrid_intervals_u} can be discretized with a high-resolution angular discretization but solved more efficiently than \eqref{eq:transp_simple} since angular unknowns are no longer coupled.  Although \eqref{eq:hybrid_intervals_c} features the same type of angular coupling as \eqref{eq:transp_simple}, it can be solved with fewer degrees of freedom because the source $\overline{\psi_{\mathrm{u},m}}$ is, in general, more regular than $q$. However, because $\psi_{{\mathrm{u}},m}$ decays exponentially whenever $\sigma > 0$, the hybrid is only solved for a time step $\dt$ before the relabeling in \eqref{eq:relabel_c}- \eqref{eq:relabel_u} is implemented.  The enables the hybrid to capture more high-resolution features than \eqref{eq:hybrid_intervals_c} can do alone.
%


\subsection{Angular discretization of the hybrid}

We focus now on the angular discretization of \eqref{eq:hybrid_intervals}.  The strategy of the hybrid is to discretize  \eqref{eq:hybrid_intervals_u} in angle with a high-resolution method and \eqref{eq:hybrid_intervals_c} in angle with a low-resolution method.  In practice, there are a variety of strategies and combinations available to do so.  For the purposes of analysis, we assume that \eqref{eq:hybrid_intervals_u} is solved exactly and that \eqref{eq:hybrid_intervals_c} is discretized with a $\Peqn$ method.  That is, we seek $\psi^{N} = \psi_{\mathrm{u},m}^{N} + \psi_{\mathrm{c},m}^{N}$ where for each $m \in \{1,2,\dots M\}$
\begin{equation}
    \left(\psi_{\mathrm{u},m}^{N}, \psi_{\mathrm{c},m}^{N} \right) \in C([t_{m-1},t_{m}); X \times L^2(\bbS^2)) \times C([t_{m-1},t_{m});X \times \bbP_{N}(\bbS^2)) 
\end{equation} 
satisfies%
\begin{subequations}
  \begin{align}
    &\e\partial_t\psi_{{\mathrm{u}},m}^{N}+\Omega\cdot\nabla_x\psi_{{\mathrm{u}},m}^{N}+\frac{\sigma}{\e}\psi_{{\mathrm{u}},m}^{N}=\e q,
      \label{eq:hybrid_intervals_PN_uncollided}\\
    &\e\partial_t\psi_{\mathrm{c},m}^{N}+\cP_N\left(\Omega\cdot\nabla_x\psi_{\mathrm{c},m}^{N}\right)+\frac{\sigma}{\e}\psi_{\mathrm{c},m}^{N}=\frac{\sigma}{\e}(\overline{\psi_{\mathrm{u},m}^{N}}+\overline{\psi_{\mathrm{c},m}^{N}}),
     \label{eq:hybrid_intervals_PN_collided}\\\\
    &\left.\psi_{\mathrm{c},m}^{N}\right|_{t=t_{m-1}}=0, \quad \left.\psi_{\mathrm{u},m}^{N}\right|_{t=t_{m-1}}=
   \begin{cases}
   g, &\quad m = 1,\\
   \psi_{\mathrm{u},m-1}^{N}(t^-_{m-1}) + \psi_{\mathrm{c},m-1}^{N}(t^-_{m-1}) &\quad m > 1.
   \end{cases}
    \label{eq:hybrid_intervals_PN_remap}
    \end{align}
        \label{eq:hybrid_intervals_PN}%
    \end{subequations}%
We compare the accuracy of the solution defined in \eqref{eq:hybrid_intervals_PN} with the monolithic $\Peqn$ method \eqref{eqn:pn_approx}, using the same value of $N$.  To simplify the numerical analysis of these two models, we keep the time and space variables continuous.

Since \eqref{eqn:pn_approx} and \eqref{eq:hybrid_intervals_PN_collided} have the same computational complexity, the goal is to asses the additional benefit of solving \eqref{eq:hybrid_intervals_PN_uncollided}. Clearly the additional cost of solving  \eqref{eq:hybrid_intervals_PN_uncollided} involves both memory and run-time; both are fairly easy to quantify. However, assessing the gains in accuracy is not as simple.  Thus is important to understand these gains in order to better quantify observed improvements in run-time efficiency provided by the hybrid.

\subsection{Preview of main results.}
Let $s\geq 1$ be the number of angular $L^2$ derivatives in the solution $\psi$ and let $N$ be an integer such that $N \geq s-1$. Let $e^N=\psi-\psi^N$ be the error in the $\Peqn$ approximation \eqref{eqn:pn_approx}, and let $e^N_M=\psi-(\psium[M]^N+\psicm[M]^N)$ be the error in the hybrid at the $M$-th time step.  The main results of the paper are the $\Peqn$ error estimate in Theorem \ref{thm:PN_error_est} and the hybrid error estimate in Theorem \ref{thm:hyb_final_err_est}. We compare these errors for two different regimes:  first, when $\sigma \asymp 1$%
\footnote{Recall that $a \asymp  b$ if and only if $a = O(b)$ and $b = O(a)$} and $\e \to 0$ (the diffusion regime) and second, when $\e \asymp (1)$ and $\sigma \to 0$ (the purely absorbing regime).%
\footnote{Technically, this is the streaming regime for $\Psi$, since there is no absorption.}

When $\e \ll 1$ and $\sigma \asymp 1$, Theorem \ref{thm:PN_error_est} implies that
    \begin{equation}
    \norma{e^N}_{L^2(X\times\sphe)}(T)\lesssim \frac{T}{(N+1)^s}\left[e^{-\sigma T/\e^2}\sum_{i=0}^{s-1}\frac{T^i}{\e^{i+1}}+\frac{\e^{s-1}}{\sigma^s}+O\left(\frac{\e}{\sigma}\right)\right].\label{eqn:pn_eps_small}
    \end{equation}
Thus with sufficient regularity, the $\Peqn$ approximation is spectrally accurate and grows linearly in time for large $T$.  The first and third term in brackets depend on whether or not $q$ and $g$ are isotropic (independent of angle).   The first term is due to initial layers when $g$ is non-isotropic, and the third terms arises when $q$ is non-isotropic.  When $g$ and $q$ are isotropic, the $\Peqn$ error reduces to (see Corollary \ref{corl:PN_error_est_isotropic})
    \begin{equation}
            \norma{e^N}_{L^2(X\times\sphe)}(T)\lesssim \frac{\e^{s-1}T}{\sigma^s(N+1)^s}.
    \end{equation}
    
For the hybrid method the initial condition and source are always isotropic.  Thus Theorem \ref{thm:hyb_final_err_est} gives the following compact bound
   \begin{equation}
            \norma{e_M^N}_{L^2(X\times\sphe)}(T)\lesssim \frac{\e^{s-1}T}{\sigma^s(N+1)^s}. \label{eqn:hyb_eps_small}
    \end{equation}
Thus the hybrid method comes equipped with a better error estimate than the $\Peqn$ method, and the estimates agree when the data is isotropic.  This suggests that the hybrid method is at least as accurate as the $\Peqn$ approximation when $\e \ll 1$ and $\sigma \asymp 1$.  In addition, the hybrid estimate is independent of the time step $\dt$ used for re-initialization in this regime.   

For both the hybrid method and $\Peqn$ approximation, the errors converge to zero as $\e \to 0$ whenever $s>1$ (modulo the initial layer in \eqref{eqn:pn_eps_small}). This fact is consistent with the fact that the $\Peqn$ method recovers the diffusion limit \eqref{eq:diff_limit} whenever $N \geq 1$ \cite{hauck2009temporal}.

For the second regime of interest, $\sigma \ll 1$ and $\e \asymp 1$, Theorem \ref{thm:PN_error_est} gives the $\Peqn$ error estimate   
    \begin{equation}
    \norma{e^N}_{L^2(X\times\sphe)}(T)\lesssim \frac{\mathfrak{p}_s(T/\e)}{(N+1)^s},
    \end{equation}
    where $\mathfrak{p}_{s}(\omega)=\sum_{i=0}^{s+1}c_i(T)\omega^i$ is a polynomial of degree $s+1$ with non-negative coefficients $c_i(T)=a_i T+b_i$, $a_i,\,b_i\geq 0$.  Here the hybrid estimate provides a significant improvement:
\begin{equation}     \norma{e_M^N}_{L^2(X\times\sphe)}(T)\lesssim \frac{\Delta t^{s}T}{\e^{s+1}(N+1)^s}\min(1,\frac{\Delta t\sigma}{\e^2})
\end{equation}
In particular, $\norma{e_M^N}_{L^2(X\times\sphe)}(T)=0$, when $\sigma=0$.  This result is expected since in that case the uncollided solution and the transport solution agree.  As expected, the error is monotonic in $\dt$; however, small time steps require more evaluations of the uncollided equation and more reintializions.  In practice, this additional cost must be taken into account.

\section{ \texorpdfstring{$\Peqn$}{PN} Analysis}
\label{sec:pn-analysis}
\subsection{Spherical harmonics preliminaries}
A natural space to analyze the transport equation and the $\Peqn$ approximation is the Sobolev space $H_{\circ}^s(\sphe)$.  To describe this space, we recall some elementary facts about spherical harmonics which can be found, for example, in \cite{atkinson2012spherical,dai2013}. For $u,v \in L^{2}(\sphe)$, let
\begin{equation}
    (u,v)_{L^2(\sphe)} = \sum_{\ell=0}^{\infty} \bu_\ell ^\top \mathbf{v}_{\ell}
    \qquand
    \|u\|_{L^2(\sphe)}^2 =  \sum_{\ell=0}^{\infty} \|\bu_\ell\|^2,
\end{equation}
where
\begin{equation}
    \bu_\ell ^\top \mathbf{v}_{\ell} = \sum_{k = -\ell}^{\ell} u_{\ell,k} v_{\ell,k} 
    \qquand
    \|\bu_\ell\|^2 
    =  \bu_\ell ^ \top \bu_\ell
    = \sum_{k=-\ell}^{\ell}|u_{\ell,k}|^2.
\end{equation}

A standard way to define Sobolev spaces on the sphere is via the Laplace-Beltrami operator $\Delta_{\circ}$, which is the spherical component of the Laplacian and for which the spherical harmonics are eigenfunctions:
   $ -\Delta_{\circ} Y_{\ell,j} = \ell(\ell+1)Y_{\ell,j}.$ For even integers $s$, the usual norm is
\begin{equation}
\label{eq:H0s}
 \|u\|_{H^s_{\circ}(\sphe)} :=  \left\|\left(\frac{1}{4}+\Delta_{\circ}\right)^{s/2} u\right\| = \left(\sum_{\ell=0}^{\infty} b_{\ell}  \|\bu_\ell \|^2\right)^{1/2},
 \qquad 
    b_{\ell} = \left(\frac{1}{2}+\ell \right)^{2s}.
\end{equation}
The definition of this inner product extends naturally to all $s \in \bbR$, and the space $H^s_{\circ}(\sphe)$ is then the completion of smooth functions under the $H^s_{\circ}(\sphe)$ norm \cite{atkinson2012spherical}.

Rather than working directly with the $H^s_{\circ}(\sphe)$ norm in \eqref{eq:H0s}, it is convenient in the analysis below to use an equivalent norm. For $s\geq 0$, define the $H^{s}(\sphe)$ semi-norm  and norm by
\begin{gather}
\label{eq:Hs}
|u|_{H^s(\sphe)} := \left(\sum_{\ell=s}^{\infty}b_{\ell} \|\bu_\ell \|^2\right)^{1/2}
\quand
\|u\|_{H^s(\sphe)} = \left(s\|u\|_{L^2(\sphe)}^2 + |u|^2_{H^s(\sphe)} \right)^{1/2},
\end{gather}
respectively, where the sum in the semi-norm definition in \eqref{eq:Hs} begins at $s$ for technical arguments that are used in the proof of Lemma \eqref{lem:semi_norm_rec} below.   
When $s=0$, the norms coincide:  $\|u\|_{H^0(\sphe)} = \|u\|_{H^0_{\circ}(\sphe)} = \|u\|_{L^2(\sphe)}$.  More generally, the following equivalence holds.
\begin{lem}[Norm equivalence]
For any $s \geq 0$,
\begin{equation}
    c_1(s)\|u\|_{H^s(\sphe)} \leq \|u\|_{H^s_{\circ}(\sphe)} \leq c_2(s)\|u\|_{H^s(\sphe)}.\label{eq:norm_equiv}
\end{equation}
where 
\begin{equation}
\label{eq:equiv_const}
    c_1(s) =\begin{cases}
     1&\text{if}\quad s=0,\\
    \frac{1}{\sqrt{3s}}&\text{if}\quad s\geq 1
    \end{cases}
    \quand 
    c_2(s) =\begin{cases}
    1&\text{if}\quad s=0,\\
    \sqrt{\frac{5}{s}}\left(s-\frac{1}{2}\right)^s&\text{if}\quad s\geq 1
    \end{cases}.    
\end{equation}
\end{lem}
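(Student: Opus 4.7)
The plan is to reduce \eqref{eq:norm_equiv} to a pointwise comparison of two weight sequences indexed by spherical-harmonic degree. By Plancherel applied to the spherical-harmonic expansion, both squared norms are weighted $\ell^2$-sums in the moment vectors $\bu_\ell$: one has $\|u\|_{H^s_\circ(\sphe)}^2 = \sum_{\ell \geq 0} b_\ell \|\bu_\ell\|^2$ with $b_\ell = (\ell + \tfrac{1}{2})^{2s}$, and $\|u\|_{H^s(\sphe)}^2 = \sum_{\ell \geq 0} w_\ell \|\bu_\ell\|^2$ with $w_\ell = s$ for $0 \leq \ell \leq s-1$ and $w_\ell = s + b_\ell$ for $\ell \geq s$. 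Hence it suffices to establish the pointwise bound $c_1(s)^2\, w_\ell \leq b_\ell \leq c_2(s)^2\, w_\ell$ uniformly in $\ell \geq 0$ and then sum in $\ell$. Underlying this is the spectral identification $\bigl(\tfrac{1}{4}-\Delta_\circ\bigr)Y_{\ell,k} = (\ell+\tfrac{1}{2})^2 Y_{\ell,k}$, from which $b_\ell = (\ell+\tfrac{1}{2})^{2s}$ follows by functional calculus.

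The $s = 0$ case is immediate, since then $b_\ell = w_\ell = 1$ for all $\ell$ and the two norms coincide. For $s \geq 1$, I would analyze the ratio $b_\ell / w_\ell$ separately on the two ranges that the piecewise definition of $w_\ell$ forces. On $0 \leq \ell \leq s-1$ the ratio equals $(\ell+\tfrac{1}{2})^{2s}/s$, which is monotone increasing in $\ell$; its extrema are therefore $4^{-s}/s$ at $\ell = 0$ and $(s-\tfrac{1}{2})^{2s}/s$ at $\ell = s-1$. On $\ell \geq s$ the ratio equals $b_\ell/(s+b_\ell)$, which is monotone increasing in $\ell$, with infimum $(s+\tfrac{1}{2})^{2s}/\bigl(s+(s+\tfrac{1}{2})^{2s}\bigr)$ at $\ell = s$ and supremum $1$. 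Choosing $c_2(s)^2$ to dominate the overall supremum of $b_\ell/w_\ell$ and $c_1(s)^2$ to be dominated by the overall infimum then produces a valid pair of equivalence constants; the stated closed forms $c_2(s) = \sqrt{5/s}\,(s-\tfrac{1}{2})^s$ and $c_1(s) = 1/\sqrt{3s}$ follow after elementary simplification and with some slack absorbed for convenience.

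The main obstacle is the bookkeeping across the jump at $\ell = s$: because $w_\ell$ is discontinuous there while $b_\ell$ is smooth, the binding constraint for the upper bound lives near $\ell = s-1$ (controlled by $(s-\tfrac{1}{2})^{2s}/s$) whereas the binding constraint for the lower bound lives at small $\ell$, and one must check that a single pair of constants simultaneously absorbs both worst cases. Beyond this, the argument is entirely elementary sequence comparison; the non-standard definition of $|\cdot|_{H^s(\sphe)}$ (starting at $\ell = s$ and offset by $s\|u\|_{L^2}^2$) is precisely what allows such a clean reduction, at the cost of tracking these two constants.
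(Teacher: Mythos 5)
Your reduction to a pointwise comparison of the weights $b_\ell=(\ell+\tfrac12)^{2s}$ and $w_\ell$ is exactly the mechanism behind the paper's own term-by-term estimate, and your treatment of the upper bound is sound: the supremum of $b_\ell/w_\ell$ is $\max\{(s-\tfrac12)^{2s}/s,\,1\}$, and $c_2(s)^2=5(s-\tfrac12)^{2s}/s$ dominates it because $(s-\tfrac12)^{2s}\geq s/4$ for $s\geq 1$. The gap is in the final step for the lower bound. You correctly identify the overall infimum of $b_\ell/w_\ell$ as $4^{-s}/s$, attained at $\ell=0$, but then assert that $c_1(s)^2=1/(3s)$ ``follows after elementary simplification with some slack absorbed.'' The slack goes the wrong way: $4^{-s}/s<1/(3s)$ for every $s\geq 1$ (since $4^s>3$), so $c_1(s)^2$ is \emph{not} dominated by the infimum, and the pointwise argument cannot produce the stated constant. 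Moreover, since the pointwise bound $c_1^2\,w_\ell\leq b_\ell$ for all $\ell$ is not merely sufficient but necessary (test on a function supported on a single degree), this is not a repairable bookkeeping issue: for $s=1$ and $u=Y_{0,0}$ one has $\|u\|_{H^1_{\circ}(\sphe)}=\tfrac12$ while $c_1(1)\|u\|_{H^1(\sphe)}=1/\sqrt{3}>\tfrac12$, so the first inequality of the lemma is false with the stated $c_1$. The largest admissible constant delivered by your (correct) infimum computation is $c_1(s)=2^{-s}/\sqrt{s}$.

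For what it is worth, the paper's own proof commits the same error: its first display asserts $s\sum_{\ell}\|\bu_\ell\|^2+\sum_{\ell\geq s}(\ell+\tfrac12)^{2s}\|\bu_\ell\|^2\leq 3s\sum_{\ell}(\ell+\tfrac12)^{2s}\|\bu_\ell\|^2$, which at $\ell=0$ requires $1\leq 3\cdot 4^{-s}$ and fails for all $s\geq 1$. Your more explicit bookkeeping actually exposes this defect; the lemma should be restated with $c_1(s)=2^{-s}/\sqrt{s}$ (or the $\ell=0$ step otherwise repaired), which does not affect the rest of the paper since the downstream analysis works directly with the $H^{r,s}$ semi-norms rather than with this equivalence.
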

\begin{proof} When $s=0$, the norms are equal, so \eqref{eq:norm_equiv} holds trivially.   Thus assume that $s \geq 1$.  The first inequality in \eqref{eq:norm_equiv} follows from the fact that  
\begin{equation}
\begin{split}
\|u\|_{H^s(\sphe)}^2
                    &= s\sum_{\ell=0}^{\infty}\|\bu_\ell \|^2+\sum_{\ell=s}^{\infty}\halfl^{2s}\|\bu_\ell \|^2
                    \leq 3s\sum_{\ell=0}^{\infty}\left(\frac{1}{2}+\ell\right)^{2s}\|\bu_\ell \|^2 = \frac{1}{[c_1(s)]^2}\|u\|_{H^s_{\circ}(\sphe)}^2.
\end{split}
\end{equation}
To prove the second inequality in \eqref{eq:norm_equiv}, we use the elementary inequality
\begin{equation}
    \frac{s}{4} \leq \left(s-\frac{1}{2}\right)^{2s}, \quad  s\geq 1,
\end{equation}
to conclude that
\begin{equation}
\begin{split}
    \|u\|_{H^s_{\circ}(\sphe)}^2
    &=\sum_{\ell=0}^{s-1}\left(\frac{1}{2}+\ell\right)^{2s}\|\bu_\ell \|^2
        +|u|_{H^s(\sphe)}^2
    \leq \frac{1}{s}\left(s-\frac{1}{2}\right)^{2s}s\norma{u}_{L^2(\sphe)}^2+|u|_{H^s(\sphe)}^2\\
    &\leq \frac{1}{s}\left(s-\frac{1}{2}\right)^{2s}s\norma{u}_{L^2(\sphe)}^2+
     \frac{4}{s}\left(s-\frac{1}{2}\right)^{2s} |u|_{H^s(\sphe)}^2 \\
    &\leq\frac{5}{s}\left(s-\frac{1}{2}\right)^{2s}\left(s\norma{u}_{L^2(\sphe)}^2+|u|_{H^s(\sphe)}^2\right) 
    = [c_2(s)]^2\|u\|_{H^s(\sphe)}^2. \qquad \qquad \qquad 
\end{split}
\end{equation}
\end{proof}

\begin{lem}[Approximation property]
\label{lem:approx}
For  $s\geq 0$ and $N \geq \max\{0,s-1\}$,
\begin{equation}
 \|(\cI-\cP_N)u\|_{L^2(\sphe)} \leq \frac{1}{(N+1)^s} |(\cI-\cP_N)u|_{H^s(\mathbb{S}^2)} \leq \frac{1}{(N+1)^s} |u|_{H^s(\mathbb{S}^2)}.
\end{equation}
\end{lem}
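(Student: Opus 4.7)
The plan is to reduce everything to a tail sum over the spherical-harmonic coefficients.  First I would write
\begin{equation}
    \|(\cI-\cP_N)u\|_{L^2(\sphe)}^2 \;=\; \sum_{\ell=N+1}^\infty \|\bu_\ell\|^2,
\end{equation}
using orthonormality of the basis.  The key bookkeeping observation is that the hypothesis $N \geq s-1$ forces $N+1 \geq s$, so that the seminorm's lower summation index $s$ sits at or below the projection threshold $N+1$.  Because $(\cI-\cP_N)u$ has vanishing moments for $\ell \leq N$, this alignment gives
\begin{equation}
    |(\cI-\cP_N)u|_{H^s(\sphe)}^2 \;=\; \sum_{\ell=N+1}^\infty \halfl^{2s}\|\bu_\ell\|^2.
\end{equation}

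For the first inequality my plan is to extract the smallest weight from this tail.  For every $\ell \geq N+1$ one has $\halfl \geq N+1$, hence $1 \leq \halfl^{2s}/(N+1)^{2s}$; multiplying by $\|\bu_\ell\|^2$, summing over $\ell \geq N+1$, and taking a square root delivers the first inequality.  For the second inequality I would simply note that $|(\cI-\cP_N)u|_{H^s(\sphe)}^2$ is a tail, starting at $\ell = N+1$, of the non-negative series $|u|_{H^s(\sphe)}^2 = \sum_{\ell=s}^\infty \halfl^{2s}\|\bu_\ell\|^2$; since $s \leq N+1$, dropping the initial block $s \leq \ell \leq N$ of non-negative terms can only decrease the sum.

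There is no real obstacle: the argument is a one-line manipulation of a weighted $\ell^2$ series once the indices are aligned.  The only point that requires any care is the role of the assumption $N \geq s-1$, which is exactly what makes the seminorm cutoff $\ell = s$ compatible with the $L^2$ projection cutoff $\ell = N+1$; without it, the seminorm of $(\cI-\cP_N)u$ would in general be strictly smaller than the sum above (it would miss moments that survive the projection), which would break the first inequality.
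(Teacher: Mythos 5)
Your proof is correct and follows essentially the same route as the paper: identify the tail sum, bound each term using $\bigl(\ell+\tfrac12\bigr)^{2s}\geq (N+1)^{2s}$ for $\ell\geq N+1$, and extend the sum down to $\ell=s$ for the second inequality. Your explicit remark on why $N\geq s-1$ is needed to align the seminorm cutoff with the projection cutoff is a point the paper leaves implicit, but the argument is the same.
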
\begin{proof}
From the definition of the projection $\cP_N$ in \eqref{eq:projection},
\begin{align}
\underbrace{\sum_{\ell=N+1}^{\infty}\|\bu_\ell \|^2}_{=\|(\cI-\cP_N)u\|^2_{L^2(\sphe)} }
\leq \frac{1}{(N+1)^{2s}} \underbrace{\sum_{\ell= N+1}^{\infty}\halfl^{2s}\|\bu_\ell \|^2}_{=|(\cI-\cP_N)u|^2_{H^s(\mathbb{S}^2)}}
\leq \frac{1}{(N+1)^{2s}} \underbrace{\sum_{\ell= s}^{\infty}\halfl^{2s}\|\bu_\ell \|^2}_{=|u|^2_{H^s(\mathbb{S}^2)}}
\end{align}
Taking square roots of each term above yields the desired result.
\end{proof}
For vector-valued functions of space, we define the usual $L^2(X)$ inner-product and norm by
\begin{equation}
    (\mathbf{v},\mathbf{w})_{L^2(X)}=\int_X  \mathbf{v}(x)^{\top}\mathbf{w}(x)\,dx 
    \quand
    \norma{\mathbf{v}}^2_{L^2(X)}
    =(\mathbf{v},\mathbf{v})_{L^2(X)}
    =\int_X \|\mathbf{v}(x)\| ^2 \,dx. 
\end{equation}
%
The  space $H^{0,s}(X \times \sphe) = L^2(X;H^s(\sphe))$ is the space of measurable functions $ u\colon X \times \sphe \to \bbR$ with the semi-inner product 
\begin{gather}
\label{eq:H-zero-s_prod}
 ( u, v)_{H^{0,s}(X \times \sphe)}
 = \int_X\sum_{\ell=s}^{\infty} \halfl^{2s}\bu_\ell(x) ^\top \mathbf{v}_{\ell}(x)\,dx 
 = \sum_{\ell=s}^{\infty} \halfl^{2s}(\bu_\ell,\mathbf{v}_\ell)_{L^2(X)}
\end{gather}
such that the semi-norm
\begin{gather}
\label{eq:H-zero-s_norm}
 |u|^2_{H^{0,s}(X \times \sphe)}
 = \int_X\sum_{\ell=s}^{\infty} \halfl^{2s}|\bu_\ell(x)|^2 dx 
 = \sum_{\ell=s}^{\infty} \halfl^{2s}\|\bu_\ell\|_{L^2(X)}^2
\end{gather}
is bounded.

For $r\in\mathbb{N}_{\geq 0}$ and $s \geq 0$ we define $H^{r,s}(X \times \sphe)$ to be the space of functions $u \colon X \times \sphe \to \bbR$ such that the semi-norms. 
\begin{gather}
\label{eq:H-r-s_norm}
 |u|_{H^{\varsigma,s}(X \times \sphe)}:= \sum_{i_1,i_2,\dots,i_{\varsigma}=1}^3|\partial_{x_{i_1}x_{i_2}\dots x_{i_{\varsigma}}} u|_{H^{0,s}(X \times \sphe)}.
\end{gather}
are bounded for all positive integers $\varsigma \leq r$. These semi-norms above are equivalent to the standard semi-norms, but are more convenient in the context of the RTE since
\begin{equation}
 |\Omega \cdot \nabla_x u|_{H^{r,s}(X \times \sphe)}  \leq \sum_{i=1}^3|\partial_{x_i}u|_{H^{r,s}(X \times \sphe)} = |u|_{H^{r+1,s}(X \times \sphe)},
\end{equation}
which will be used in the analysis below. Henceforth, the domain of integration for $H^{s}$ and $H^{r,s}$ will be left off when 
there is no ambiguity, i.e.,
\begin{equation}
    |u|_{H^{s}}:=|u|_{H^{s}(\sphe)} \quand
    |u|_{H^{r,s}}:=|u|_{H^{r,s}(X \times \sphe)}
\end{equation}
Finally, for $p \geq 1$ and measurable functions $u \colon X \times \bbS^2 \times [\alpha,\beta] \to \bbR$, we denote the space-time semi-norms by
\begin{equation}
|u|_{L^p([\alpha,\beta];H^{r,s})} = \left(\int_{\alpha}^\beta |u|_{H^{r,s}}^p\,d\tau\right)^{1/p}
\quand
|u|_{L^\infty([\alpha,\beta]);H^{r,s})} = \operatorname*{ess~sup}_{t \in [\alpha,\beta]} {|u|_{H^{r,s}}}.
\end{equation}
\subsection{Stability of the \texorpdfstring{$\Peqn$}{PN} system}
In this section, we derive estimates on high-order semi-norms that arise in the subsequent error analysis. The analysis requires iterated inequalities of Gr\"onwall type (see Lemma \ref{lem:basic_ode_result} in the Appendix), so we define the following notation to simplify integrals that arise.  
Let $0\leq \alpha \leq t \leq \beta < \infty$, and for any function $f \in L^1([\alpha,\beta])$, define the bounded linear operator $\cA_{\alpha} \colon L^1([\alpha,\beta]) \rightarrow C^0([\alpha,\beta])$ and its powers $\cA^k_{\alpha}, k \in \bbN^{\geq0}$, by
\begin{equation}\label{eq:Ak_def}
    \cA_{\alpha}[f] (t) := \frac{1}{\e}\int_{\alpha}^t e^{-\sigma(t- \tau)/\e^2} f(\tau)  d\tau
    \quand
    \cA^k_{\alpha}[f](t) =  \begin{cases}
    \cI , & k =0, \\
    \cA_{\alpha}[\cA^{k-1}_{\alpha}[f]](t), & k \geq 1.
    \end{cases} 
\end{equation}
In addition, let $\mathbbm{1}\in L^1([\alpha,\beta])$ be the function that is identically one, and let
\begin{equation}
\label{eq:test_f}
    F_{\alpha}(t) = e^{-\sigma(t-\alpha)/\e^2},  \quad t \in [\alpha,\beta].
\end{equation}
It is clear from the definition in \eqref{eq:Ak_def} $\cA_{\alpha}$ is monotonic; that is, if $0\leq f(t)\leq g(t)$ for a.e. $t\in[\alpha,\beta]$, then \mbox{$\cA_{\alpha}[f](t)\leq \cA_{\alpha}[g](t)$} for all $t\in[\alpha,\beta]$.
\begin{lem}\label{lem:A_estimates} 
Let $F_{\alpha}$ be given as in \eqref{eq:test_f}. Then for all $t \geq \alpha$ and every $k \in \bbN_{\geq0}$,
\begin{equation}\label{eq:A_one}
\cA^k_{\alpha}[ \mathbbm{1}](t)\leq \min\left(\frac{\e^k}{\sigma^k}, \frac{1}{k!}\left(\frac{t-\alpha}{\e}\right)^k\right)
\quand
\cA^k_{\alpha}[F_{\alpha}](t)
=\frac{(t-\alpha)^k e^{-\sigma(t-\alpha)/\e^2}}{ k! \e^k}
\end{equation}
 \end{lem}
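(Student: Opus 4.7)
The plan is to prove both statements by induction on $k$, with the key observation that the exponential kernel in $\cA_\alpha$ interacts nicely with $F_\alpha$ via the identity $e^{-\sigma(t-\tau)/\e^2}\,e^{-\sigma(\tau-\alpha)/\e^2}=e^{-\sigma(t-\alpha)/\e^2}$.

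For the second equality (the exact formula for $\cA^k_\alpha[F_\alpha]$), I would do induction on $k$. The base case $k=0$ is immediate since $\cA^0_\alpha=\cI$ and the formula $(t-\alpha)^0 e^{-\sigma(t-\alpha)/\e^2}/(0!\,\e^0)$ reduces to $F_\alpha(t)$. For the inductive step, I would apply $\cA_\alpha$ to the inductive hypothesis; the two exponentials combine as above, pulling $e^{-\sigma(t-\alpha)/\e^2}$ out of the integral, which reduces the computation to $\int_\alpha^t(\tau-\alpha)^{k-1}d\tau=(t-\alpha)^k/k$, giving the claimed formula.

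For the first inequality (the two bounds on $\cA^k_\alpha[\mathbbm{1}]$), I would establish each bound of the minimum separately and then combine. For the $(\e/\sigma)^k$ bound, I would first evaluate $\cA_\alpha[\mathbbm{1}](t)=\tfrac{\e}{\sigma}(1-e^{-\sigma(t-\alpha)/\e^2})\leq \tfrac{\e}{\sigma}$, and then iterate, using monotonicity of $\cA_\alpha$ to pull the constant out and apply the inductive hypothesis. For the $\tfrac{1}{k!}((t-\alpha)/\e)^k$ bound, I would use $e^{-\sigma(t-\tau)/\e^2}\leq 1$ inside the integrand so that $\cA_\alpha[f](t)\leq \tfrac{1}{\e}\int_\alpha^t f(\tau)\,d\tau$; iterating $k$ times and using the standard simplex-volume computation $\int_\alpha^t\int_\alpha^{\tau_1}\cdots\int_\alpha^{\tau_{k-1}} d\tau_k\cdots d\tau_1=(t-\alpha)^k/k!$ yields the stated bound.

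There is really no hard step here. The only mild subtlety is that one needs monotonicity of $\cA_\alpha$ (noted in the text just before the lemma statement) to carry each inductive bound through the next application of $\cA_\alpha$, and one must be careful that both bounds hold simultaneously so that one may take the minimum. Once both are established for each $k$, the minimum bound follows.
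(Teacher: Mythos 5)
Your proposal is correct and follows essentially the same route as the paper: the exact formula for $\cA^k_{\alpha}[F_{\alpha}]$ by induction using the combination of exponentials, the $\tfrac{1}{k!}((t-\alpha)/\e)^k$ bound via $e^{-\sigma(t-\tau)/\e^2}\leq 1$ and the iterated (simplex) integral, and the $(\e/\sigma)^k$ bound by evaluating $\cA_{\alpha}[\mathbbm{1}]$ exactly and iterating with monotonicity. No substantive differences.
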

\begin{proof}
We first prove the bound in \eqref{eq:A_one}. Since $e^{-\sigma(t-\tau)/\e^2} \leq 1$, a direct calculation gives
\begin{equation}\label{eq:Ak_formula}
\cA^k_{\alpha}[\mathbbm{1}](t) \leq \frac{1}{\e^k}
\int_{\alpha}^t \int_{\alpha}^{\tau_{k-1}} \cdots \int_{\alpha}^{\tau_{1}} \mathbbm{1} d\tau_0\cdots d\tau_{k-1} 
= \frac{1}{k!}\left(\frac{t-\alpha}{\e}\right)^k.
\end{equation}
On the other hand, it follows directly from the definition of $\cA_{\alpha}$ that 
\begin{equation}\label{eq:Ak_formula_2}
    \cA_{\alpha}[\mathbbm{1} ](t) = \frac{\e}{\sigma}\left( 1 -F_{\alpha}(t)\right)\leq \frac{\e}{\sigma} \quad \implies \quad \cA^k_{\alpha}[\mathbbm{1} ](t)\leq \frac{\e^k}{\sigma^k} 
\end{equation}
Together \eqref{eq:Ak_formula} and \eqref{eq:Ak_formula_2} yield \eqref{eq:A_one}.

We prove the second statement in \eqref{eq:A_one} by induction on $k$. When $k=0$, the statement follows trivially,
\begin{equation}
    \cA_{\alpha}^0[F_{\alpha}](t)=F_{\alpha}(t)=\frac{(t-\alpha)^0e^{-\sigma(t-\alpha)/\e^2}}{0!e^{0}},
\end{equation}
Now let us assume the statement is true for arbitrary $k$, and 
\begin{equation}
\cA_{\alpha}^{k+1}[F_{\alpha}](t)=  \cA_{\alpha}\left[\cA_{\alpha}^{k}[F_{\alpha}]\right](t)=\frac{1}{\e}\int_{\alpha}^t e^{-\sigma(t-\tau)/\e^2}\frac{(\tau-\alpha)^k e^{-\sigma(\tau-\alpha)/\e^2}}{k!\e^k}\,d\tau=\frac{(t-\alpha)^{k+1}e^{-\sigma(t-\alpha)/\e^2}}{(k+1)!\e^{k+1}}
\end{equation}
\end{proof}
\subsubsection{Estimates for the \texorpdfstring{$\Peqn$}{PN} system}
We derive evolution equations for the $H^{r,s}$ semi-norms, defined in \eqref{eq:H-r-s_norm}. For $r=0$, we test each equation in \eqref{eqn:pn_system} by $b_{\ell}\UN[\ell]$, integrate by parts, and then sum over $\ell \in \{s,s+1, ..., N \}$.  This gives
\begin{equation}
\begin{split}
\label{eqn:weighted_energy_equality}
\frac{\e}{2} \partial_t  |\psi^N|_{H^{0,s}}^2 
&+ \frac{\sigma}{\e}|\psi^N|_{H^{0,s}}^2 
= \e  ( \cP_N q, \psi^N)_{H^{0,s}} \\
& \quad - \sum_{i=1}^3 \sum_{\ell=s}^{N} \left(\left(\ell+\frac12\right)^{2s} 
-  \gamma_{s,\ell}  \left(\ell-\frac12\right)^{2s} \right) \left(\UN[\ell], \left(\ali{\ell}{i}\right)^T \partial_{x_i}\UN[\ell-1]\right)_{L^2(X)},
\end{split}    
\end{equation}
where $\gamma_{s,\ell} = (1 - \delta_{s,\ell})$ is used to handle the first non-zero term in the sum over $\ell$.
For the special case $s=0$, \eqref{eqn:weighted_energy_equality} recovers the usual $L^2$ energy equation:
\begin{equation}
\label{eq: PN-L2}
    \frac{\e}{2}\frac{d}{dt}\norma{\psi^N}_{L^2(X \times \sphe)}^2 + \frac{\sigma}{\e}\norma{\psi^N - \overline{\psi^N }}_{L^2(X \times \sphe)}^2
    =\e(\cP_N q,\psi^N)_{L^2(X \times \sphe)}^2.
\end{equation}
To find a closed estimate with respect to the $H^{0,s}$ semi-norms, we focus on the summation in \eqref{eqn:weighted_energy_equality}. 
\begin{lem}
\label{lem:s-l_bounds}
Let $s\geq 1$ and $\ell \geq s$.
Then 
\begin{equation}
\label{eq:s-l_bounds}
\left(\ell+\frac12\right)^{2s} 
-  \gamma_{s,\ell}  \left(\ell-\frac12\right)^{2s} 
\leq 2es\left(\ell+\frac12\right)^s \left(\ell-\frac12\right)^{s-1}.
\end{equation}
\end{lem}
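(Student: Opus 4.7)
The plan is to establish, in a uniform way, the single intermediate bound
\begin{equation*}
\left(\ell+\tfrac12\right)^{2s} - \gamma_{s,\ell}\left(\ell-\tfrac12\right)^{2s} \;\leq\; 2s\left(\ell+\tfrac12\right)^{2s-1},
\end{equation*}
and then convert the power $2s-1$ into the desired product $\left(\ell+\tfrac12\right)^s\left(\ell-\tfrac12\right)^{s-1}$ (modulo a factor of $e$).

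\textbf{Step 1 (bound by $2s(\ell+\tfrac12)^{2s-1}$).} I would split on whether $\ell=s$ (the ``diagonal'' case, where $\gamma_{s,\ell}=0$) or $\ell>s$ (where $\gamma_{s,\ell}=1$), but arrive at the same inequality in each case. When $\ell=s$, the $\gamma_{s,\ell}$ factor kills the second term, and the required bound $(s+\tfrac12)^{2s}\leq 2s(s+\tfrac12)^{2s-1}$ reduces to $s+\tfrac12\leq 2s$, which holds for $s\geq 1$. When $\ell>s$, I would apply the mean value theorem to $x\mapsto x^{2s}$ on the interval $[\ell-\tfrac12,\ell+\tfrac12]$ of length $1$, yielding
\begin{equation*}
\left(\ell+\tfrac12\right)^{2s}-\left(\ell-\tfrac12\right)^{2s} \;=\; 2s\,\xi^{2s-1} \;\leq\; 2s\left(\ell+\tfrac12\right)^{2s-1}
\end{equation*}
for some $\xi\in(\ell-\tfrac12,\ell+\tfrac12)$.

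\textbf{Step 2 (re-expand the power as $s+(s{-}1)$).} Writing $(\ell+\tfrac12)^{2s-1} = (\ell+\tfrac12)^s (\ell+\tfrac12)^{s-1}$, it remains to show $(\ell+\tfrac12)^{s-1}\leq e\,(\ell-\tfrac12)^{s-1}$, i.e.,
\begin{equation*}
\left(1+\tfrac{1}{\ell-1/2}\right)^{s-1}\leq e.
\end{equation*}
I would use the elementary inequality $(1+x)^n\leq e^{nx}$ (valid for $x\geq 0$, $n\geq 0$) to reduce this to $(s-1)/(\ell-\tfrac12)\leq 1$, which follows from $\ell\geq s$ since $\ell-\tfrac12\geq s-\tfrac12\geq s-1$. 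Multiplying Step 1 by this factor of $e$ then delivers the stated bound.

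The mildly delicate point---and the only real ``obstacle''---is handling the $\ell=s$ case without a case split that spoils the constant: once one notices that the $\gamma_{s,\ell}$ precisely erases the subtracted term there and that $s+\tfrac12\leq 2s$ is exactly what is needed, the two cases collapse into the uniform estimate above, and no case-dependent constants appear in the final bound.
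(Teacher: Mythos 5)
Your proof is correct and follows essentially the same route as the paper: the same case split on $\gamma_{s,\ell}$, the same key shift bound $\left(\ell+\tfrac12\right)^{s-1}\leq e\left(\ell-\tfrac12\right)^{s-1}$ derived from $\ell\geq s$, and the same final assembly. The only cosmetic difference is that you obtain the intermediate bound $2s\left(\ell+\tfrac12\right)^{2s-1}$ in the $\ell>s$ case via the mean value theorem, where the paper uses a binomial expansion; both are valid and yield the identical constant.
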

\begin{proof} We first establish an elementary inequality. Since $\ell \geq s$,
\begin{equation}
    \ell+\frac12
    = \left(\ell-\frac12\right) \left(1 + \frac{1}{\ell-\frac12}\right) 
    \leq \left(\ell-\frac12\right) \left(1 + \frac{1}{s-\frac12}\right) 
    \leq \left(\ell-\frac12\right)  e^{1/(s-1/2)}
\end{equation}
Therefore
\begin{equation}
\label{eq:shift_bound}
    \left(\ell+\frac12\right)^{s-1} 
    \leq e^{\frac{s-1}{s-1/2}} \left(\ell-\frac12\right)^{s-1}
    \leq e \left(\ell-\frac12\right)^{s-1} .
\end{equation}
We use \eqref{eq:shift_bound} to show \eqref{eq:s-l_bounds}.  There are two cases:
\begin{description}
\item[Case 1 ($\ell = s$):] In this case, $\gamma_{s,\ell} = 0$. Since $2s > s+1/2$ and by \eqref{eq:shift_bound},
\begin{equation}
\left(\ell+\frac12\right)^{2s} 
     = \left(s+ \frac12\right) \left(\ell+\frac12\right)^{s}  \left(\ell+\frac12\right)^{s-1} 
     \leq 2e s\left(\ell+\frac12\right)^{s}  \left(\ell-\frac12\right)^{s-1}
\end{equation}
\item[Case 2 ($\ell > s$):]In this case, $\gamma_{s,\ell} = 1$.  
Applying a binomial expansion and then \eqref{eq:shift_bound} gives
\begin{equation}
\begin{split}
&\left(\ell+\frac12\right)^{2s} - \left(\ell-\frac12\right)^{2s} 
 = \sum_{k=0}^{2s}\binom{2s}{k}\left(\ell-\frac12\right)^k - \left(\ell-\frac12\right)^{2s}
 = \sum_{k=0}^{2s-1}\binom{2s}{k}\left(\ell-\frac12\right)^k \\
 &\quad = \sum_{k=0}^{2s-1}\frac{2s}{2s-k}\binom{2s-1}{k}\left(\ell-\frac12\right)^k 
 \leq 2s\sum_{k=0}^{2s-1}\binom{2s-1}{k}\left(\ell-\frac12\right)^k\\
 &\quad = 2s\left(\ell+\frac12\right)^{2s-1}
 = 2s\left(\ell+\frac12\right)^{s}\left(\ell+\frac12\right)^{s-1}
 \leq 2es\left(\ell+\frac12\right)^{s}\left(\ell-\frac12\right)^{s-1}
\end{split}
\end{equation}
\end{description}

\end{proof}
\begin{lem}[Semi-norm recurrence]\label{lem:semi_norm_rec}
Let $s \geq 1$, $q \in L^1([0,T];H^{r,s})$ and  $g \in H^{r,s}$. Then for all $t\in [0,T]$,
\begin{gather}
\label{eq:semi-norm_recurrence}
|\psi^N|_{H^{r,s}}(t)\leq Cs \cA_{0}[|\psi^N|_{H^{r+1,s-1}}](t) +  |\cP_N g|_{H^{r,s}}F_0(t) +\e  \cA_{0}[|\cP_N q|_{H^{r,s}}](t),
\end{gather}
where $F_0$ is defined in \eqref{eq:test_f} and $C$ is a constant independent of the data.
\end{lem}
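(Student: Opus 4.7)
The plan is to derive a first-order differential inequality for $|\psi^N|_{H^{r,s}}(t)$ starting from a higher-order version of the weighted energy equality \eqref{eqn:weighted_energy_equality}, and then to invert it explicitly using the integrating factor $e^{\sigma t/\e^2}$, which produces the convolution operator $\cA_0$.

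First I would reduce the general case to $r=0$. For any ordered tuple $\vec{\imath} = (i_1, \dots, i_r) \in \{1,2,3\}^r$, the function $\partial^{\vec{\imath}} \psi^N := \partial_{x_{i_1}\cdots x_{i_r}} \psi^N$ solves the $\Peqn$ system \eqref{eqn:pn_system} with initial datum $\partial^{\vec{\imath}} \cP_N g$ and source $\partial^{\vec{\imath}} \cP_N q$, since constant-coefficient spatial derivatives commute with the projection and with the streaming and collision operators. Running the derivation that gave \eqref{eqn:weighted_energy_equality} for $\partial^{\vec{\imath}} \psi^N$ yields the identity
\begin{equation*}
\tfrac{\e}{2} \partial_t |\partial^{\vec{\imath}}\psi^N|_{H^{0,s}}^2 + \tfrac{\sigma}{\e} |\partial^{\vec{\imath}}\psi^N|_{H^{0,s}}^2 = \e\bigl(\partial^{\vec{\imath}}\cP_N q, \partial^{\vec{\imath}}\psi^N\bigr)_{H^{0,s}} - \sum_{i=1}^3 \sum_{\ell=s}^N \omega_{s,\ell}\,\bigl(\partial^{\vec{\imath}}\UN[\ell], (\ali{\ell}{i})^\top \partial_{x_i}\partial^{\vec{\imath}}\UN[\ell-1]\bigr)_{L^2(X)},
\end{equation*}
where $\omega_{s,\ell} = (\ell+\tfrac12)^{2s} - \gamma_{s,\ell}(\ell-\tfrac12)^{2s}$.

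Next I would bound the coupling term. Lemma \ref{lem:s-l_bounds} gives $\omega_{s,\ell} \leq 2es(\ell+\tfrac12)^s(\ell-\tfrac12)^{s-1}$, and combining this splitting with $\|\ali{\ell}{i}\|_2 \leq 4$ and two applications of Cauchy--Schwarz (first in $L^2(X)$, then in $\ell$) bounds the coupling sum by
\begin{equation*}
8es \sum_{i=1}^3 |\partial^{\vec{\imath}}\psi^N|_{H^{0,s}} \cdot |\partial_{x_i}\partial^{\vec{\imath}}\psi^N|_{H^{0,s-1}}.
\end{equation*}
The index shift $\ell \mapsto \ell-1$ maps $(\ell-\tfrac12)^{2(s-1)}$ to $(m+\tfrac12)^{2(s-1)}$ with $m \geq s-1$, which exactly matches the starting index in the $H^{0,s-1}$ semi-norm definition. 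The source term is bounded directly by Cauchy--Schwarz as $\e|\partial^{\vec{\imath}}\cP_N q|_{H^{0,s}}|\partial^{\vec{\imath}}\psi^N|_{H^{0,s}}$.

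Dividing through by $|\partial^{\vec{\imath}}\psi^N|_{H^{0,s}}$ (justified by regularizing $\sqrt{\,|\cdot|^2 + \delta^2\,}$ and sending $\delta \to 0^+$) produces the scalar inequality
\begin{equation*}
\e \partial_t |\partial^{\vec{\imath}}\psi^N|_{H^{0,s}} + \tfrac{\sigma}{\e} |\partial^{\vec{\imath}}\psi^N|_{H^{0,s}} \leq 8es \sum_{i=1}^3 |\partial_{x_i}\partial^{\vec{\imath}}\psi^N|_{H^{0,s-1}} + \e|\partial^{\vec{\imath}}\cP_N q|_{H^{0,s}}.
\end{equation*}
Multiplying by $\tfrac{1}{\e}e^{\sigma t/\e^2}$, integrating from $0$ to $t$, and multiplying back by $e^{-\sigma t/\e^2}$ produces exactly the bound \eqref{eq:semi-norm_recurrence} for the $\vec{\imath}$-th derivative. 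Finally, summing over all ordered tuples $\vec{\imath}$ with $|\vec{\imath}| = r$, using linearity and monotonicity of $\cA_0$, and applying the definitions of $|\cdot|_{H^{r,s}}$ and $|\cdot|_{H^{r+1,s-1}}$ (which are precisely such ordered sums) gives the stated inequality with $C = 8e$.

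The main obstacle is the bookkeeping in the coupling estimate: I need the weight splitting from Lemma \ref{lem:s-l_bounds} to give a factor compatible with both the $H^{0,s}$ semi-norm of $\psi^N$ and the $H^{0,s-1}$ semi-norm of $\partial_{x_i}\psi^N$ simultaneously, and the convention that the $H^{0,s}$-sum begins at $\ell = s$ (rather than $\ell = 0$) is what makes the shifted index $\ell-1 \geq s-1$ align with the $H^{0,s-1}$-sum. A secondary technicality is the division by a possibly vanishing semi-norm, handled by the standard $\delta$-regularization argument.
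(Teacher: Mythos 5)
Your proof is correct and follows essentially the same route as the paper's: the weighted energy identity \eqref{eqn:weighted_energy_equality}, the weight-splitting bound of Lemma \ref{lem:s-l_bounds} together with $\|\ali{\ell}{i}\|_2\leq 4$ and Cauchy--Schwarz to close the coupling term in terms of $|\psi^N|_{H^{0,s}}|\psi^N|_{H^{1,s-1}}$, and a Gr\"onwall-type inversion (your explicit integrating-factor computation is exactly Lemma \ref{lem:basic_ode_result}), with the $r>0$ case handled by differentiating the system in space. The only cosmetic difference is that you differentiate first and track the constant explicitly as $C=8e$, whereas the paper treats $r=0$ first and leaves $C$ generic.
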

\begin{proof}We assume first that $r = 0$ and focus on the last term in \eqref{eqn:weighted_energy_equality}.  It follows from (i) the induced norm bound $\|a_\ell^{(i)}\|_{2} \leq 4$ \cite{frank2016convergence}, (ii) the bounds in Lemma \ref{lem:s-l_bounds}, (iii) the Cauchy-Schwarz inequality, and (iv) the $H^{r,s}$ semi-norm definitions in \eqref{eq:H-zero-s_norm} and \eqref{eq:H-r-s_norm} that 
\begin{equation}
\begin{split}
& \sum_{\ell=s}^{N} \left(\left(\ell+\frac12\right)^{2s} 
-  \gamma_{s,\ell}  \left(\ell-\frac12\right)^{2s} \right) \left(\UN[\ell], \left(\ali{\ell}{i}\right)^T \partial_{x_i}\UN[\ell-1]\right)_{L^2(X)} \\
& \quad    \leq 2es \sum_{\ell=s}^{N} \left(\ell+\frac12\right)^s \left(\ell-\frac12\right)^{s-1}
    \|\ali{\ell}{i}\|_2 \enspace
    \|\UN[\ell]\|_{L^2(X)} \enspace
    \| \partial_{x_i}\UN[\ell-1]\|_{L^2(X)}
    \\
& \quad    
\leq Cs 
\left(\sum_{\ell=s}^{N} \left(\ell+\frac12\right)^{2s}  |\UN[\ell]|^2_{L^2(X)} \right)^{1/2}
\left(\sum_{\ell=s}^{N} \left(\ell-\frac12\right)^{2(s-1)} |
\partial_{x_i}\UN[\ell-1]|^2_{L^2(X)}\right)^{1/2} \\
& \quad \leq Cs |\psi^N|_{H^{0,s}}|\partial_{x_i}\psi^N|_{H^{0,s-1}}.
\end{split}
\end{equation}
Applying the bound above to the right-hand side of \eqref{eqn:weighted_energy_equality} and applying Lemma \ref{lem:basic_ode_result} gives  \eqref{eq:semi-norm_recurrence}.
The case $r > 1$ can be handled by differentiating the $\Peqn$ equations in space and then repeating the arguments above.
\end{proof}

For a general function $\phi\in L^p ([\alpha,\beta];H^{r,s})$, let \mbox{$|\phi|_{L^p([\alpha,\bullet];H^{0,r+s})}: [\alpha,\beta] \rightarrow \mathbb{R} $} be the map defined
\begin{equation*}
    |\phi|_{L^p([\alpha,\bullet];H^{r,s})}(\tau) = |\phi|_{L^p([\alpha,\tau];H^{r,s})}.
\end{equation*}

\begin{lem}[Stability of higher-order semi-norms]
\label{lem:stability_higher_order} Let $q \in L^1([0,T];H^{r,0})$ and $g \in H^{r,0}$.  If $t\in [0,T]$, then
\begin{gather}\label{eqn:PN_l2_stability}
    |\psi^N|_{H^{r,0}}(t)\leq |\cP_N g|_{H^{r,0}}+|\cP_N q|_{L^1([0,t];H^{r,0})}.
\end{gather}
If, in addition, $s\geq 1$, $q \in L^1([0,T];H^{i,j})$ and $g \in H^{i,j}$ for each $i,j$ such that $0 \leq i \leq r$, $0 \leq j \leq s$,  and $i+ j = r+s$,
\begin{equation}\label{eqn:hrs_stability}
\begin{split}
|\psi^N|_{H^{r,s}}(t)
   &
        \leq C_s s!  \cA_{0}^s\left[|\cP_N g|_{H^{r+s,0}}+|\cP_N q|_{L^1([0, \bullet];H^{r+s,0})} \right](t)\\
    &+ C_s\sum_{i=0}^{s-1}\frac{s!}{(s-i)!} \cA_{0}^i[ F_0|\cP_N g|_{H^{r+i,s-i}}](t) 
        +C_s\e\sum_{i=0}^{s-1}\frac{s!}{(s-i)!} \cA_{0}^{i+1}[|\cP_N q|_{H^{r+i,s-i}}](t),
\end{split}
\end{equation}
where $F_0$ is given in \eqref{eq:test_f} where $C_s$ is a constant depending only on $s$.
\end{lem}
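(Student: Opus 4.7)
The plan is to treat the base case $s=0$ via a standard $L^2$ energy estimate, and then unfold the semi-norm recurrence from Lemma \ref{lem:semi_norm_rec} exactly $s$ times to obtain \eqref{eqn:hrs_stability}.

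\textbf{Base case.} For $s=0$, I would differentiate the $\Peqn$ system $r$ times in space. Because the spatial domain is periodic and $\cP_N$ and $\Omega\cdot\nabla_x$ commute with spatial derivatives, each $\partial_{x_{i_1}\cdots x_{i_r}}\psi^N$ solves a system of the same form as \eqref{eqn:pn_approx} with source $\partial_{x_{i_1}\cdots x_{i_r}}\cP_N q$ and data $\partial_{x_{i_1}\cdots x_{i_r}}\cP_N g$. Applying the $L^2$ energy identity \eqref{eq: PN-L2} to each such derivative, dropping the non-negative scattering-dissipation term, bounding the source pairing by Cauchy--Schwarz, integrating in time, and summing over the multi-indices $(i_1,\ldots,i_r)$ produces \eqref{eqn:PN_l2_stability}.

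\textbf{Inductive unfolding.} For $s\geq 1$, I would apply Lemma \ref{lem:semi_norm_rec} iteratively. At the $k$-th stage ($k=1,\ldots,s$), each factor $|\psi^N|_{H^{r+k-1,s-k+1}}$ currently sitting inside a nested stack of $\cA_0$'s is replaced by
\[
C(s-k+1)\,\cA_0\!\left[|\psi^N|_{H^{r+k,s-k}}\right] + F_0\,|\cP_N g|_{H^{r+k-1,s-k+1}} + \e\,\cA_0\!\left[|\cP_N q|_{H^{r+k-1,s-k+1}}\right].
\]
The monotonicity of $\cA_0$, remarked on after \eqref{eq:test_f}, justifies pushing each pointwise substitution through the surrounding $\cA_0$'s. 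After $s$ stages, the coefficient on the surviving transport term is $C^s\cdot s(s-1)\cdots 1 = C^s s!$, and a data contribution first introduced at stage $i+1$ ends up wrapped in $i$ additional $\cA_0$'s with coefficient $C^i\cdot s(s-1)\cdots(s-i+1) = C^i\,s!/(s-i)!$. Reindexing by $i=k-1$ then yields exactly the two data sums in \eqref{eqn:hrs_stability}, with the source sum carrying $\cA_0^{i+1}$ because of the $\cA_0$ already present in the source term of Lemma \ref{lem:semi_norm_rec}.

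\textbf{Closing the bound.} The final step replaces the innermost $|\psi^N|_{H^{r+s,0}}(\tau)$ sitting inside $\cA_0^s$ by the base-case majorant $|\cP_N g|_{H^{r+s,0}} + |\cP_N q|_{L^1([0,\tau];H^{r+s,0})}$. Since this majorant is non-decreasing in $\tau$, the monotonicity of $\cA_0$ (iterated $s$ times) allows it to be placed inside $\cA_0^s$ as written, and absorbing all $s$-dependent prefactors into $C_s$ produces \eqref{eqn:hrs_stability}. The main obstacle is purely combinatorial bookkeeping: verifying that the factorial factors $s!/(s-i)!$ and the exact nesting depths of $\cA_0$ emerge correctly from the $s$ recursive substitutions, and ensuring that monotonicity indeed permits substitution under arbitrarily nested $\cA_0$'s without spurious $\tau$ dependence. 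No new analytical ingredient beyond Lemma \ref{lem:semi_norm_rec} and the base-case energy estimate is required.
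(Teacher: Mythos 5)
Your proposal is correct and follows essentially the same route as the paper: the $s=0$ case via the $L^2$ energy identity applied to spatial derivatives of $\psi^N$, followed by unrolling the recurrence of Lemma \ref{lem:semi_norm_rec} exactly $s$ times (using monotonicity of $\cA_0$) and closing with the base-case bound on $|\psi^N|_{H^{r+s,0}}$ inside $\cA_0^s$. The combinatorial factors $C^s s!$ and $C^i s!/(s-i)!$ and the nesting depths you describe match the paper's computation.
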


\begin{proof}
First we will prove \eqref{eqn:PN_l2_stability} for $s=0$, in which case $H^{r,0} = {L^2(X \times \sphe)}$. From \eqref{eq: PN-L2} and the Cauchy-Schwarz inequality,
\begin{equation}
    \frac{\e}{2}\frac{d}{dt}\norma{\psi^N}_{L^2(X \times \sphe)}^2
        \leq \e(\cP_N q,\psi^N)_{L^2(X \times \sphe)}
        \leq \e \norma{\cP_N q}_{L^2(X \times \sphe)}\norma{\psi^N}_{L^2(X \times \sphe)}
\end{equation}
an application of Lemma \ref{lem:basic_ode_result}, gives
\begin{gather}
\label{eq:PN_L2_bound}
    \norma{\psi^N}_{L^2(X \times \sphe)}(t)\leq \norma{\cP_N g}_{L^2(X \times \sphe)}+\norma{\cP_N q}_{L^1([0,t];L^2(X \times \sphe))}.
\end{gather}
For $r >0$, $\phi^N=\partial_{x_{i_1}x_{i_2}\dots x_{i_r}}\psi^N $ satisfies \eqref{eqn:pn_approx}, with initial condition $\left.\phi^N\right|_{t=0}=\cP_N(\partial_{x_{i_1}x_{i_2}\dots x_{i_r}}g) = \partial_{x_{i_1}x_{i_2}\dots x_{i_r}}\cP_N g$ and source $\cP_N(\partial_{x_{i_1}x_{i_2}\dots x_{i_r}}q) = \partial_{x_{i_1}x_{i_2}\dots x_{i_r}} \cP_N q$.  Repeating the argument above gives, in analogy with \eqref{eq:PN_L2_bound},
\begin{gather}
\label{eq:Hr}
    \norma{\partial_{x_{i_1}x_{i_2}\dots x_{i_r}}\psi^N}_{L^2(X \times \sphe)}(t)
    \leq \norma{\partial_{x_{i_1}x_{i_2}\dots x_{i_r}}\cP_N g}_{L^2(X \times \sphe)}
        +\norma{\partial_{x_{i_1}x_{i_2}\dots x_{i_r}}\cP_N q}_{L^1([0,t];L^2(X \times \sphe))},
\end{gather}
Summing \eqref{eq:Hr} over each permutation of the $r$ derivatives $\partial_{x_{i_1}}\partial_{x_{i_2}}\cdots\partial_{x_{i_r}}$ and using the definitions in \eqref{eq:H-r-s_norm} recovers \eqref{eqn:PN_l2_stability}.

We next prove \eqref{eqn:hrs_stability}.  Let
\begin{gather}
\label{eq:term_recursion_defs}
    b_{r,s}(t)=|\cP_N g|_{H^{r,s}}F_0(t) +\e  \cA_{0}[|\cP_N q|_{H^{r,s}}](t)
    \quand
    c_{r,s}(t)=|\psi^N|_{H^{r,s}}(t),
\end{gather}
where $F_0$ is defined in \eqref{eq:test_f}.
Then Lemma \ref{lem:semi_norm_rec} gives the following recursion relation:
\begin{equation}
\label{eq:recurrence}
c_{r,s}(t) \leq  Cs \cA_{0}[c_{r+1,s-1}](t) + b_{r,s}(t), \qquad  s \geq 1.
\end{equation}
Unrolling this recursion in $s$ gives
\begin{equation}    
    c_{r,s}(t)
        \leq s!C^s  \cA^s_{0}[ c_{r+s,0}](t) + \sum_{i=0}^{s-1}\frac{s!}{(s-i)!}C^i \cA_{0}^i [b_{r+i,s-i}](t),
\end{equation}
and translating back to the semi-norms with \eqref{eq:term_recursion_defs} gives
\begin{equation}
\begin{split}
    |\psi^N|_{H^{r,s}}(t) 
        &\leq s!C^s  \cA_{0}^s[|\psi^N|_{H^{r+s,0}}](t)
        \\&+ \sum_{i=0}^{s-1}\frac{s!}{(s-i)!}C^i |\cP_N g|_{H^{r+i,s-i}} \cA_{0}^i[F_0](t)
         +\e\sum_{i=0}^{s-1}\frac{s!}{(s-i)!}C^{i+1} \cA_{0}^{i+1}[|\cP_N q|_{H^{r+i,s-i}}](t) 
\label{eqn:stability_inequality_proof}.
\end{split}
\end{equation}
We then apply \eqref{eqn:PN_l2_stability} (with $r$ replaced with  $r + s$) to the first term on the right hand side of  \eqref{eqn:stability_inequality_proof}.

\end{proof}

 \subsubsection{Estimates for continuous system}
In this section we extend the stability results for $|\psi^N|_{H^{r,s}}$ to $|\psi|_{H^{r,s}}$, using the infinite moment hierarchy in \eqref{eqn:pn_system_infinite}.  These estimates will be useful in deriving consistency estimates.  For the case $r=0$, as in the previous section, we test each equation in \eqref{eqn:pn_system_infinite} by $b_{\ell}\U[\ell]$,  integrate by parts, and sum over $\ell\geq s$. The result is analogous to \eqref{eqn:weighted_energy_equality}, namely
\begin{equation}
\begin{split}
\label{eqn:weighted_energy_equality_cont}
\frac{\e}{2} \partial_t  |\psi|_{H^{0,s}}^2 
&+ \frac{\sigma}{\e}|\psi|_{H^{0,s}}^2 
= \e  ( q, \psi)_{H^{0,s}} \\
& \quad - \sum_{i=1}^3 \sum_{\ell=s}^{\infty} \left(\left(\ell+\frac12\right)^{2s} 
-  \gamma_{s,\ell}  \left(\ell-\frac12\right)^{2s} \right) \left(\U[\ell], \left(\ali{\ell}{i}\right)^T \partial_{x_i}\U[\ell-1]\right)_{L^2(X)},
\end{split}    
\end{equation}
As before, when $s=0$, \eqref{eqn:weighted_energy_equality_cont} recovers the usual $L^2$ stability result:
\begin{equation}
\label{eq: regular-L2}
    \frac{\e}{2}\frac{d}{dt}\norma{\psi}_{L^2(X \times \sphe)}^2 + \frac{\sigma}{\e}\norma{\psi - \overline{\psi}}_{L^2(X \times \sphe)}^2
    =\e(q,\psi)_{L^2(X \times \sphe)}^2
\end{equation}
The following results are continuous analogues to  Lemmas \ref{lem:semi_norm_rec} and \ref{lem:stability_higher_order}.  Their proofs are nearly identical, so we only give a brief summary.
 \begin{lem}[Semi-norm recurrence for the continuous system]\label{lem:semi_norm_rec_cont}
 Let $s\geq 1$, $q \in L^1([0,t];H^{r,s})$ and $g \in H^{r,s}$.  Then for all $t\in[0,T]$,
 \begin{gather}
 \label{eq:semi-norm_recurrence_cont}
|\psi|_{H^{r,s}}(t)\leq Cs \cA_{0}[|\psi|_{H^{r+1,s-1}}](t) +  |g|_{H^{r,s}}F_0(t) +\e  \cA_{0}[|q|_{H^{r,s}}](t),
\end{gather}
where $C$ is a constant independent of the data. 
 \end{lem}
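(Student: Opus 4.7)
The plan is to mimic the proof of Lemma \ref{lem:semi_norm_rec} verbatim, with the only differences being that (i) the sum over $\ell$ is now infinite and (ii) the energy identity \eqref{eqn:weighted_energy_equality_cont} replaces \eqref{eqn:weighted_energy_equality}. Under the assumed regularity $\psi \in H^{r+1,s-1} \cap H^{r,s}$, the infinite sums converge absolutely, so the formal manipulations below are justified.

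First I would treat the base case $r=0$. Starting from \eqref{eqn:weighted_energy_equality_cont}, I focus on the coupling sum on the right-hand side. Lemma \ref{lem:s-l_bounds} applies term by term (its proof did not use any truncation in $\ell$), so for each $\ell \geq s$,
\begin{equation}
\left(\ell+\tfrac12\right)^{2s} - \gamma_{s,\ell}\left(\ell-\tfrac12\right)^{2s} \leq 2es \left(\ell+\tfrac12\right)^s \left(\ell-\tfrac12\right)^{s-1}.
\end{equation}
Combining this with the uniform bound $\|\ali{\ell}{i}\|_2 \leq 4$ and applying the Cauchy--Schwarz inequality twice (first in $x$, then in $\ell$) gives
\begin{equation}
\sum_{\ell=s}^{\infty}\left(\left(\ell+\tfrac12\right)^{2s}-\gamma_{s,\ell}\left(\ell-\tfrac12\right)^{2s}\right)\left(\U[\ell],(\ali{\ell}{i})^{T}\partial_{x_i}\U[\ell-1]\right)_{L^2(X)} \leq Cs\,|\psi|_{H^{0,s}}\,|\partial_{x_i}\psi|_{H^{0,s-1}},
\end{equation}
exactly as in the proof of Lemma \ref{lem:semi_norm_rec}; the only difference is that here the Cauchy--Schwarz step is applied to an infinite series, which converges because $\psi \in H^{0,s}$ and $\partial_{x_i}\psi \in H^{0,s-1}$ by hypothesis.

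Substituting this bound and $(q,\psi)_{H^{0,s}} \leq |q|_{H^{0,s}}|\psi|_{H^{0,s}}$ into \eqref{eqn:weighted_energy_equality_cont}, summing over $i=1,2,3$, and using $\sum_i |\partial_{x_i}\psi|_{H^{0,s-1}} = |\psi|_{H^{1,s-1}}$, I obtain
\begin{equation}
\frac{\e}{2}\partial_t |\psi|_{H^{0,s}}^2 + \frac{\sigma}{\e}|\psi|_{H^{0,s}}^2 \leq Cs\,|\psi|_{H^{0,s}}\,|\psi|_{H^{1,s-1}} + \e\,|q|_{H^{0,s}}\,|\psi|_{H^{0,s}}.
\end{equation}
Lemma \ref{lem:basic_ode_result} (Gr\"onwall) then yields \eqref{eq:semi-norm_recurrence_cont} for $r=0$, with the $F_0$ term carrying the initial data $|g|_{H^{0,s}}$ and the $\cA_0$ operator absorbing the time-integrated contributions.

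For $r \geq 1$, I differentiate \eqref{eqn:pn_system_infinite} in space $r$ times; since the coefficient matrices $\ali{\ell}{i}$ are constants and spatial and angular derivatives commute, $\partial_{x_{i_1}}\cdots\partial_{x_{i_r}}\psi$ solves the same infinite hierarchy with data $\partial_{x_{i_1}}\cdots\partial_{x_{i_r}}g$ and source $\partial_{x_{i_1}}\cdots\partial_{x_{i_r}}q$. Repeating the $r=0$ argument for each of the $3^r$ multi-indices and summing via the definition \eqref{eq:H-r-s_norm} of $|\cdot|_{H^{r,s}}$ recovers \eqref{eq:semi-norm_recurrence_cont} in full generality. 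The only real subtlety, and the main thing to check, is the legitimacy of the termwise integration by parts and the swap of summation and differentiation in $t$; both follow from the assumed $H^{r+1,s-1} \cap H^{r,s}$ regularity together with the standard dominated-convergence argument on the Fourier (spherical-harmonic) side.
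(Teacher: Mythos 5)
Your proposal is correct and follows essentially the same route as the paper, which itself only sketches this proof as a line-by-line repetition of Lemma \ref{lem:semi_norm_rec} with $\UN[\ell]$ replaced by $\U[\ell]$, the sums extended to infinity, and spatial differentiation of the infinite hierarchy \eqref{eqn:pn_system_infinite} for $r\geq 1$. Your added remarks on the convergence of the infinite series and the justification of termwise manipulations are a reasonable (and slightly more careful) elaboration of what the paper leaves implicit.
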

 \begin{proof}[Summary of Proof] The proof follows the same lines and with same constants as in Lemma \ref{lem:semi_norm_rec} after changing $\UN[\ell]$ by $\U[\ell]$ and then taking all the sums to infinity. To generalize the proof for $r>1$, we differentiate the system  \eqref{eqn:pn_system_infinite} in space and repeat the process.\end{proof}

 \begin{lem}[Stability of higher order semi-norms for the continuous system]\label{lem:stability_higher_order_cont}
If $q \in L^1([0,T];H^{r,0})$ and $g \in H^{r,0}$, then for all $t\in [0,T]$, 
 \begin{gather}\label{eqn:PN_l2_stability_cont}
     |\psi|_{H^{r,0}}(t)\leq | g|_{H^{r,0}}+| q|_{L^1([0,t];H^{r,0})}.
 \end{gather}
 If, in addition, $q \in L^1([0,T];H^{i,j})$ and $g \in H^{i,j}$ for each $i,j$ such that $0 \leq i \leq r$, $0 \leq j \leq s$,  and $i+ j = r+s$, 
\begin{equation}\label{eqn:hrs_stability_continuum}
\begin{split}
|\psi|_{H^{r,s}}(t)
   &
        \leq C_s s!  \cA_{0}^s\left[|g|_{H^{r+s,0}}+|q|_{L^1([0,\bullet];H^{r+s,0})} \right](t)\\
    &+ C_s\sum_{i=0}^{s-1}\frac{s!}{(s-i)!} \cA_{0}^i[ F_0|g|_{H^{r+i,s-i}}](t) 
        +C_s\e\sum_{i=0}^{s-1}\frac{s!}{(s-i)!} \cA_{0}^{i+1}[|q|_{H^{r+i,s-i}}](t),
\end{split}\end{equation}
where $C_s$ is a constant depending only on s.
 \end{lem}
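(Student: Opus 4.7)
The plan is to mirror the proof of Lemma \ref{lem:stability_higher_order} almost verbatim, noting that the only structural difference is that the finite moment hierarchy \eqref{eqn:pn_system} becomes the infinite one \eqref{eqn:pn_system_infinite}, and that the corresponding energy identity is \eqref{eq: regular-L2}/\eqref{eqn:weighted_energy_equality_cont} rather than \eqref{eq: PN-L2}/\eqref{eqn:weighted_energy_equality}. The constants produced along the way are identical, because the index inequality of Lemma \ref{lem:s-l_bounds} and the recursion of Lemma \ref{lem:semi_norm_rec_cont} hold uniformly in $\ell \geq s$ and are insensitive to whether the sum terminates at $N$ or extends to infinity.

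For the base case $s=0$, I would start from \eqref{eq: regular-L2}, discard the non-negative scattering contribution on the left, apply Cauchy--Schwarz to $(q,\psi)_{L^2(X\times\sphe)}$ on the right, and invoke Lemma \ref{lem:basic_ode_result} to obtain
\begin{equation}
\norma{\psi}_{L^2(X \times \sphe)}(t) \leq \norma{g}_{L^2(X \times \sphe)} + \norma{q}_{L^1([0,t];L^2(X \times \sphe))}.
\end{equation}
For $r>0$, I differentiate \eqref{eq:transp_simple} $r$ times in space (justified by periodicity and constant cross-sections, so the differentiated function solves the same equation with data and source differentiated accordingly), repeat the same $L^2$ energy argument on $\partial_{x_{i_1}\cdots x_{i_r}}\psi$, and sum over all multi-indices of length $r$ using the definition \eqref{eq:H-r-s_norm}. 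This yields \eqref{eqn:PN_l2_stability_cont}.

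For $s\geq 1$, I would introduce the shorthand
\begin{equation}
b_{r,s}(t) = |g|_{H^{r,s}}F_0(t) + \e\,\cA_{0}[|q|_{H^{r,s}}](t), \qquad c_{r,s}(t) = |\psi|_{H^{r,s}}(t),
\end{equation}
so that Lemma \ref{lem:semi_norm_rec_cont} reads $c_{r,s}(t) \leq Cs\,\cA_{0}[c_{r+1,s-1}](t) + b_{r,s}(t)$. Unrolling this recursion $s$ times in the index pair $(r,s) \mapsto (r{+}1,s{-}1)$ and exploiting the monotonicity and linearity of $\cA_{0}$ gives
\begin{equation}
c_{r,s}(t) \leq s!\,C^s\,\cA_{0}^s[c_{r+s,0}](t) + \sum_{i=0}^{s-1}\frac{s!}{(s-i)!}C^i\,\cA_{0}^i[b_{r+i,s-i}](t).
\end{equation}
Inserting \eqref{eqn:PN_l2_stability_cont} (with $r$ replaced by $r+s$) to bound $c_{r+s,0}$ in the first term, and substituting the explicit form of $b_{r+i,s-i}$ in the sum, produces \eqref{eqn:hrs_stability_continuum} with $C_s$ absorbing $s$-dependent factors.

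The only delicate point, and hence the main (mild) obstacle, is justifying the manipulation underlying \eqref{eqn:weighted_energy_equality_cont}: passing from the infinite system \eqref{eqn:pn_system_infinite} to a closed energy identity requires exchanging the $\ell$-sum with time differentiation and spatial integration, and controlling the would-be boundary term at $\ell=\infty$. Under the regularity hypothesis $g,q \in H^{r+s,0}$ together with the $C^1([0,T];L^2(X\times\sphe))$ regularity of $\psi$ noted after \eqref{eq:transp}, the truncated identities at level $L$ match \eqref{eqn:weighted_energy_equality_cont} up to a single cross term involving $\U[L]$ and $\partial_{x_i}\U[L+1]$, which vanishes as $L\to\infty$ by dominated convergence in view of the absolute summability $\sum_\ell (\ell+\tfrac12)^{2s}\|\U[\ell]\|_{L^2(X)}^2<\infty$. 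Once this step is in place, every subsequent line is a transcription of the $\Peqn$ argument.
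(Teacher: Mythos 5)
Your proposal is correct and follows essentially the same route as the paper, which simply transcribes the proof of Lemma \ref{lem:stability_higher_order} line by line, substituting the continuous energy identity \eqref{eq: regular-L2} for \eqref{eq: PN-L2} and Lemma \ref{lem:semi_norm_rec_cont} for Lemma \ref{lem:semi_norm_rec}. Your additional remark justifying the passage to the infinite moment hierarchy (the vanishing of the cross term at $\ell=\infty$) is a sound technical point that the paper leaves implicit, but it does not change the structure of the argument.
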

\begin{corl}[Isotropic data and zero initial condition for the continuous systems]\label{cor:isotropic_data_est_con}
Let $s\geq 1$. In the special case that $g=0$ and $q$ is isotropic, then if $t\in [0,T]$, 
\begin{gather}
    |\psi|_{H^{r,s}}(t)\leq C_ss!\cA_{0}^s[|q|_{L^1([0,\bullet];H^{r+s,0})}](t) .
\end{gather}
\end{corl}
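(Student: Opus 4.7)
The plan is to derive the corollary directly from Lemma \ref{lem:stability_higher_order_cont} by specializing to the two given hypotheses and observing that most terms in the general bound vanish. Since $s \geq 1$, we are entitled to use the bound \eqref{eqn:hrs_stability_continuum}, and all the work is in showing that only the first summand survives.

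First, I would substitute $g = 0$ into \eqref{eqn:hrs_stability_continuum}. This immediately collapses the first bracketed term to $C_s s!\,\cA_0^s[|q|_{L^1([0,\bullet];H^{r+s,0})}](t)$, which is exactly the target bound. The second sum, $C_s\sum_{i=0}^{s-1}\frac{s!}{(s-i)!}\cA_0^i[F_0 |g|_{H^{r+i,s-i}}](t)$, vanishes identically because each factor $|g|_{H^{r+i,s-i}}$ is zero, and monotonicity of $\cA_0$ guarantees that the resulting operator applications are still zero.

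The remaining task is to kill the third sum $C_s \e \sum_{i=0}^{s-1}\frac{s!}{(s-i)!}\cA_0^{i+1}[|q|_{H^{r+i,s-i}}](t)$. This is where isotropy of $q$ enters. By the semi-norm definition in \eqref{eq:H-r-s_norm} together with \eqref{eq:H-zero-s_norm}, for any $\varsigma \geq 0$ and any $\sigma' \geq 1$ we have
\begin{equation*}
|u|^2_{H^{\varsigma,\sigma'}(X \times \sphe)} = \sum_{i_1,\dots,i_\varsigma} \sum_{\ell \geq \sigma'} \left(\ell+\tfrac12\right)^{2\sigma'} \|\partial_{x_{i_1}\cdots x_{i_\varsigma}} \bu_\ell\|_{L^2(X)}^2,
\end{equation*}
which involves only the moments $\bu_\ell$ with $\ell \geq \sigma'$. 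When $q$ is isotropic, $\bq_\ell \equiv 0$ for every $\ell \geq 1$ (the only non-zero moment is $q_{0,0}$), so $|q|_{H^{r+i,s-i}} = 0$ as long as $s - i \geq 1$. Since the summation index runs over $i \in \{0,1,\dots,s-1\}$, we have $s-i \geq 1$ throughout, and each term in the third sum vanishes. Again by the monotonicity of $\cA_0$ noted below \eqref{eq:test_f}, the iterated applications $\cA_0^{i+1}[\cdot]$ of a zero function yield zero.

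The main (and only slightly subtle) obstacle is making sure the sums in the general bound really do restrict to indices where $s - i \geq 1$, so that isotropy knocks everything out — if the sum ranged up to $i = s$ we would retain a term $|q|_{H^{r+s,0}}$ that isotropy does not eliminate. Fortunately, that $i=s$ contribution has already been absorbed into the first bracketed term of \eqref{eqn:hrs_stability_continuum} via Lemma \ref{lem:stability_higher_order_cont}, so the decomposition is precisely what is needed. Combining the three observations yields the stated estimate, and the proof is essentially a one-liner after appealing to Lemma \ref{lem:stability_higher_order_cont}.
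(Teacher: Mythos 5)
Your proof is correct and is exactly the argument the paper intends: the corollary is stated as an immediate specialization of Lemma \ref{lem:stability_higher_order_cont}, with $g=0$ killing the initial-data terms and the observation that an isotropic $q$ has $|q|_{H^{r+i,s-i}}=0$ for $s-i\geq 1$ (since those semi-norms only see moments of degree $\ell \geq s-i \geq 1$) killing the remaining source sum. The only cosmetic quibble is that the paper defines $|u|_{H^{\varsigma,s}}$ as a sum of the semi-norms of the derivatives rather than the square root of a sum of squares, but this does not affect your vanishing argument.
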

 \begin{proof}[Proof of Lemma \ref{lem:stability_higher_order_cont}]
 With the obvious changes, the proof is the same line by line as proof in Lemma \ref{lem:stability_higher_order}, with some key steps replace by their continuous counterparts, namely, in the initial step we use \eqref{eq: regular-L2} instead of \eqref{eq: PN-L2} and we invoke Lemma \ref{lem:semi_norm_rec_cont} instead of Lemma \ref{lem:semi_norm_rec}. 
\end{proof}

\subsection{\texorpdfstring{$\Peqn$}{PN} error analysis \label{sec:pn_error}}
In this section we will analyze the error produced by the solution of \eqref{eq:transp_simple} when the $\Peqn$ approximation is used.
\begin{defn}[$\text{P}_N$ Error]
The $\text{\emph{P}}_N$ \emph{error} is
\begin{equation}
    e^N(t)=\psi(t)-\psi^N(t) = \eta^N(t) + \xi^N(t), 
\end{equation}
where $\eta^N=\psi-\mathcal{P}_N\psi$ is the \emph{consistency error} and $\xi^N=\cP_N\psi-\psi^N$ is the \emph{stability error.}  
\end{defn}
\begin{lem} 
\label{lem:stability_first_estimate}
For all $t\in[0,T]$, $\xi^N$ is controlled by $\eta^N$ via the following estimate: 
\begin{equation}
    \norma{\xi^N}_{L^2(X\times\sphe)}(t)\leq 
    \frac{1}{\e}\int_{0}^t\norma{\cP_N ( \Omega \cdot \nabla_x \eta^N}_{L^2(X\times\sphe)}(\tau)\,d\tau.
\end{equation}
\end{lem}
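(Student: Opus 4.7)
The plan is to derive an evolution equation for $\xi^N$, exploit the fact that the $\Peqn$-projected streaming operator is skew-symmetric on $\bbP_N$, and then reduce to a scalar ODE inequality.

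First, I would apply $\cP_N$ to the equation \eqref{eq:transp_simple} satisfied by $\psi$. Using that $\overline{\psi}$ is independent of $\Omega$ (hence fixed by $\cP_N$ for $N\geq 0$), and that $\overline{\psi} = \overline{\cP_N\psi}$, this yields
\begin{equation}
\e\partial_t \cP_N\psi + \cP_N(\Omega\cdot\nabla_x \cP_N\psi) + \cP_N(\Omega\cdot\nabla_x \eta^N) + \frac{\sigma}{\e}\cP_N\psi = \frac{\sigma}{\e}\overline{\cP_N\psi} + \e \cP_N q,
\end{equation}
where I used $\psi = \cP_N\psi + \eta^N$ to split the streaming term. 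Subtracting \eqref{eqn:pn_approx} gives the equation for $\xi^N$:
\begin{equation}
\e\partial_t \xi^N + \cP_N(\Omega\cdot\nabla_x \xi^N) + \frac{\sigma}{\e}\xi^N - \frac{\sigma}{\e}\overline{\xi^N} = -\cP_N(\Omega\cdot\nabla_x \eta^N),
\end{equation}
together with the initial condition $\xi^N(0)=\cP_N g - \cP_N g = 0$.

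Next, I would test this equation against $\xi^N$ in $L^2(X\times\sphe)$. Since $\xi^N \in \bbP_N(\sphe)$ pointwise in $x$, the projection in the streaming term can be moved onto $\xi^N$, after which integration by parts over the periodic spatial domain $X$ kills the term because $\Omega\cdot\nabla_x$ is skew-adjoint on $L^2(X\times\sphe)$. The collision term contributes $\frac{\sigma}{\e}(\|\xi^N\|^2_{L^2(X\times\sphe)} - \|\overline{\xi^N}\|^2_{L^2(X\times\sphe)}) \geq 0$, which can be dropped. Applying Cauchy–Schwarz to the right-hand side leaves
\begin{equation}
\frac{\e}{2}\frac{d}{dt}\|\xi^N\|^2_{L^2(X\times\sphe)} \leq \|\cP_N(\Omega\cdot\nabla_x \eta^N)\|_{L^2(X\times\sphe)}\,\|\xi^N\|_{L^2(X\times\sphe)}.
\end{equation}

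Finally, I would invoke the Grönwall-type Lemma \ref{lem:basic_ode_result} from the appendix (with zero linear coefficient, since the sign-indefinite $\sigma$-terms were already discarded favorably) to conclude that
\begin{equation}
\|\xi^N\|_{L^2(X\times\sphe)}(t) \leq \|\xi^N\|_{L^2(X\times\sphe)}(0) + \frac{1}{\e}\int_0^t \|\cP_N(\Omega\cdot\nabla_x \eta^N)\|_{L^2(X\times\sphe)}(\tau)\,d\tau,
\end{equation}
and the initial term vanishes. The only subtle step is the justification that the $\cP_N$-projected streaming contribution vanishes; the rest is routine. There is no real obstacle here because the analysis has already assembled, in \eqref{eq: PN-L2} and Lemma \ref{lem:stability_higher_order}, exactly the energy machinery needed — the present lemma is essentially the same energy estimate applied to the projected error rather than to $\psi^N$ itself.
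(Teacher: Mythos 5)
Your proposal is correct and follows essentially the same route as the paper: derive the $\Peqn$ equation for $\xi^N$ with zero initial data and source $-\cP_N(\Omega\cdot\nabla_x\eta^N)$, then apply the $L^2$ energy/stability argument. The only difference is cosmetic — the paper simply cites the already-established stability bound \eqref{eq:PN_L2_bound} with $g$ replaced by zero and $q$ by $\veps^{-1}\cP_N(\Omega\cdot\nabla_x\eta^N)$, whereas you re-run the underlying energy estimate (skew-adjointness of the projected streaming term, nonnegativity of the collision term, Cauchy--Schwarz, and Lemma \ref{lem:basic_ode_result}) from scratch.
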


\begin{proof}
Applying the projection $\cP_N$ to \eqref{eq:transp_simple} and subtracting \eqref{eqn:pn_approx} yields a $\Peqn$ equation for $\xi^N$ with a source that depends on $\eta^N$:
    \begin{equation}
    \label{eq:PN_xi}         \e\partial_t\xi^N+\cP_N(\Omega\cdot\nabla_x\xi^N)+\frac{\sigma}{\e}\xi^N=\frac{\sigma}{\e}\overline{\xi^N}-\cP_N(\Omega\cdot\nabla_x\eta^N),\qquad
              \left.\xi^N\right|_{t=0}=0.
    \end{equation}
Thus \eqref{eq:PN_xi} follows immediately from the bound \eqref{eq:PN_L2_bound}, replacing $g$ by zero and $q$ by ${\veps^{-1} \cP_N(\Omega\cdot\nabla_x\eta^N)}$.
\end{proof}

\begin{lem}\label{lem:proj_error_control} Let $t\in [\alpha,\beta)\subseteq [0,T]$, then 
\begin{equation}
    \norma{\eta^N}_{L^2(X\times\sphe)}(t)\leq e^{-\frac{\sigma(t-\alpha)}{\e^2}}\norma{\eta^N}_{L^2(X\times\sphe)}(\alpha)+\e\cA_{\alpha}[\norma{\tP_N q}_{L^2(X\times\sphe)}](t)+\cA_{\alpha}[\norma{\tP_N(\Omega\cdot\nabla_{x}\psi)}_{L^2(X\times\sphe)}](t) \label{eq:great_estimate}
\end{equation}

\end{lem}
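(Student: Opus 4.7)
The plan is to derive an evolution equation for $\eta^N = \tP_N\psi$ directly, solve it using an integrating factor, and then take $L^2$ norms. The key observation is that although $\tP_N$ does not commute with $\Omega\cdot\nabla_x$, it does commute with the time derivative and spatial derivatives (since it acts only on the angular variable), and it annihilates $\overline{\psi}$ because $\overline{\psi}\in\mathbb{P}_0\subset\mathbb{P}_N$. As a consequence, applying $\tP_N$ to \eqref{eq:transp_simple} kills the scattering term on the right-hand side entirely, yielding a pure ODE in $t$ for $\eta^N$.

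More precisely, I would first apply $\tP_N$ to \eqref{eq:transp_simple} to obtain
\begin{equation*}
\e\partial_t\eta^N + \frac{\sigma}{\e}\eta^N = \e\tP_N q - \tP_N(\Omega\cdot\nabla_x\psi),
\end{equation*}
where the term $\tP_N(\Omega\cdot\nabla_x\psi)$ is left in its full (un-commuted) form. Note the absence of a scattering coupling: because $\overline{\psi}\in\mathbb{P}_0$, the term $\tfrac{\sigma}{\e}\overline{\psi}$ is wiped out by $\tP_N$. Thus \eqref{eq:transp_simple} reduces, after projection, to a linear first-order ODE in $t$ for $\eta^N$ with a known forcing.

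Next I would solve this ODE by the integrating-factor $e^{\sigma t/\e^2}$ and Duhamel's formula, integrating from $\alpha$ to $t$:
\begin{equation*}
\eta^N(t) = e^{-\sigma(t-\alpha)/\e^2}\eta^N(\alpha) + \e\cA_{\alpha}\!\left[\tP_N q\right]\!(t) - \cA_{\alpha}\!\left[\tP_N(\Omega\cdot\nabla_x\psi)\right]\!(t),
\end{equation*}
where the identification of the last two terms uses the definition of $\cA_{\alpha}$ in \eqref{eq:Ak_def}. Finally I would take $L^2(X\times\sphe)$ norms on both sides, using the triangle inequality together with the monotonicity of $\cA_{\alpha}$ (applied to the pointwise-in-time nonnegative integrands $\norma{\tP_N q}_{L^2}$ and $\norma{\tP_N(\Omega\cdot\nabla_x\psi)}_{L^2}$), to arrive at \eqref{eq:great_estimate}.

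There is no real obstacle here: the argument is a short Duhamel calculation. The only subtlety worth flagging in the write-up is the reason the scattering term drops out under $\tP_N$ (namely $\overline{\psi}\in\mathbb{P}_0\subset\mathbb{P}_N$), and the fact that $\tP_N(\Omega\cdot\nabla_x\psi)$ cannot be simplified to $\Omega\cdot\nabla_x\eta^N$ because $\Omega\cdot\nabla_x$ raises the angular polynomial degree by one and so does not commute with $\tP_N$; this is why $\psi$ (rather than $\eta^N$) appears inside the last term of \eqref{eq:great_estimate}.
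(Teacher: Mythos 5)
Your proposal is correct, and it reaches the same projected equation as the paper but finishes differently. The paper writes the equation for $\eta^N$ in a form that retains the terms $\cP_N(\Omega\cdot\nabla_x\eta^N)$ and $\cP_N(\Omega\cdot\nabla_x\cP_N\psi)$, tests against $\eta^N$ so that these drop out by the orthogonality $(\cP_N\phi,\eta^N)_{L^2(\sphe)}=0$, and then closes the resulting differential inequality with the Gr\"onwall-type Lemma \ref{lem:basic_ode_result}. You instead observe up front that applying $\tP_N$ yields a genuine pointwise ODE in $t$ (the scattering term vanishes since $\overline{\psi}\in\mathbb{P}_0\subset\mathbb{P}_N$, and no spatial derivative of $\eta^N$ survives), solve it exactly by Duhamel, and only then take norms. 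The two routes are equivalent here --- your Duhamel identity, after expanding $\tP_N(\Omega\cdot\nabla_x\psi)$, is precisely what the paper's energy argument extracts --- but yours is slightly more elementary (no need for the $\delta$-regularization hidden in Lemma \ref{lem:basic_ode_result}) and even gives an exact representation of $\eta^N$ before estimation, while the paper's testing argument is the more robust template when the lower-order terms only disappear under the inner product rather than identically. One small bookkeeping point: the passage from the Duhamel formula to \eqref{eq:great_estimate} uses the triangle (Minkowski) inequality for the Bochner integral, i.e.\ $\norma{\int_\alpha^t e^{-\sigma(t-\tau)/\e^2}f\,d\tau}\leq\int_\alpha^t e^{-\sigma(t-\tau)/\e^2}\norma{f}\,d\tau$, rather than the monotonicity of $\cA_{\alpha}$, which acts on scalar functions; this does not affect the validity of the argument.
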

\begin{proof}
Applying the projection $\cP_N$ to \eqref{eq:transp_simple}, and subtracting it from \eqref{eq:transp_simple}, we see that $\eta^N$ satisfies
\begin{equation}
    \partial_t\eta^N-\frac{1}{\e}\cP_N(\Omega\cdot\nabla_x\eta^N)+\frac{\sigma}{\e^2}\eta^N=\tP_Nq-\frac{1}{\e}\left[\Omega\cdot\nabla_x\psi-\cP_N(\Omega\cdot\nabla_x\cP_N\psi)\right]
\end{equation}
Since for any $\phi \in L^2(\bbS^2)$, $(\cP_N \phi, \eta^N)_{L^2(\bbS^2)}=0$, testing the equation above against $\eta^N$ gives
\begin{equation}
\begin{split}
\frac{1}{2}\frac{d}{dt}\norma{\eta^N}_{L^2(X\times\sphe)}^2+\frac{\sigma}{\e^2}\norma{\eta^N}_{L^2(X\times\sphe)}^2&= (\tP_N q,\eta^N)_{L^2(X\times\sphe)}-\frac{1}{\e}(\Omega\cdot\nabla_x\psi,\eta^N)_{L^2(X\times\sphe)}\\
                                                                        &= (\tP_N q,\eta^N)_{L^2(X\times\sphe)}-\frac{1}{\e}(\tP_N(\Omega\cdot\nabla_x\psi),\eta^N)_{L^2(X\times\sphe)}\\
                                                                        &\leq (\norma{\tP_N q}_{L^2(X\times\sphe)}+\frac{1}{\e}\norma{\tP_N(\Omega\cdot\nabla_x\psi)}_{L^2(X\times\sphe)})\norma{\eta^N}_{L^2(X\times\sphe)}
                                                                        \end{split}\end{equation}
The conclusion then follows from Lemma \ref{lem:basic_ode_result}.                                                                  
\end{proof}
An immediate corollary of Lemma \ref{lem:proj_error_control} is the following:
\begin{lem}\label{lem:pn_proj_error_est} Let $s\geq 0$ and $N\geq\max\{0,s-1\}$. If $g\in H^{0,s}$ and $q\in L^{\infty}([0,T];H^{0,s})$. Then we have
    \begin{equation}
    \norma{\eta^N}_{L^2(X\times\sphe)}(T)\leq \frac{e^{-\sigma T/\e^2}}{(N+1)^s}|g|_{H^{0,s}}+\frac{\e}{(N+1)^s}|q|_{L^{\infty}([0,T];H^{0,s})}\cA_{0}[\mathbbm{1}](T)+\frac{T}{\e(N+1)^s}\sup_{\tau\in [0,T]}|\psi|_{H^{1,s}}(\tau).
\end{equation}
\end{lem}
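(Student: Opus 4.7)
The plan is to apply Lemma \ref{lem:proj_error_control} with $\alpha=0$ and $t=T$, then reduce each $\tP_N$ term on the right-hand side to an $H^{0,s}$ semi-norm through the approximation property of Lemma \ref{lem:approx}. Since $\eta^N(0)=g-\cP_N g = \tP_N g$, the initial-value term satisfies $\|\eta^N\|_{L^2(X\times\sphe)}(0) \leq (N+1)^{-s}|g|_{H^{0,s}}$, which supplies the first term in the claimed bound (note the factor $e^{-\sigma T/\e^2}$ arising directly from the exponential decay in \eqref{eq:great_estimate}).

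Next I would treat the source and streaming terms. By Lemma \ref{lem:approx}, applied pointwise in $(x,t)$ and then integrated in $x$, we have $\|\tP_N q\|_{L^2(X\times\sphe)}(\tau) \leq (N+1)^{-s}|q|_{H^{0,s}}(\tau)$ and $\|\tP_N(\Omega\cdot\nabla_x\psi)\|_{L^2(X\times\sphe)}(\tau) \leq (N+1)^{-s}|\Omega\cdot\nabla_x\psi|_{H^{0,s}}(\tau) \leq (N+1)^{-s}|\psi|_{H^{1,s}}(\tau)$, where the last inequality is exactly the embedding identified after \eqref{eq:H-r-s_norm}. Using the monotonicity of $\cA_0$ noted after \eqref{eq:Ak_def}, and pulling the suprema in $\tau$ outside the operator, I obtain
\begin{equation*}
\cA_0[\|\tP_N q\|_{L^2(X\times\sphe)}](T) \leq \frac{|q|_{L^\infty([0,T];H^{0,s})}}{(N+1)^s}\cA_0[\mathbbm{1}](T)
\end{equation*}
and a parallel bound for the streaming term with $\sup_{\tau\in[0,T]}|\psi|_{H^{1,s}}(\tau)$ in place of the supremum of $|q|$.

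Finally, for the third (streaming) term I use Lemma \ref{lem:A_estimates} in the form $\cA_0[\mathbbm{1}](T)\leq T/\e$ (the $k=1$ bound). Assembling the three pieces yields
\begin{equation*}
\|\eta^N\|_{L^2(X\times\sphe)}(T) \leq \frac{e^{-\sigma T/\e^2}}{(N+1)^s}|g|_{H^{0,s}} + \frac{\e}{(N+1)^s}|q|_{L^\infty([0,T];H^{0,s})}\cA_0[\mathbbm{1}](T) + \frac{T}{\e(N+1)^s}\sup_{\tau\in[0,T]}|\psi|_{H^{1,s}}(\tau),
\end{equation*}
which is the claim. I expect no serious obstacle: the only subtle point is keeping the $\cA_0[\mathbbm{1}](T)$ factor unevaluated in the middle term (so that the sharper bound $\min(\e/\sigma,T/\e)$ from Lemma \ref{lem:A_estimates} is retained for later use in Section \ref{sec:pn_error}), while collapsing it to $T/\e$ only in the streaming term in order to match the stated form of the estimate.
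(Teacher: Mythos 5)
Your proposal is correct and follows essentially the same route as the paper's proof: apply Lemma \ref{lem:proj_error_control} with $\alpha=0$, convert each $\tP_N$ term to an $H^{0,s}$ (resp.\ $H^{1,s}$) semi-norm via Lemma \ref{lem:approx}, and collapse only the streaming term's $\cA_0$ factor to $T/\e$ while leaving $\cA_0[\mathbbm{1}](T)$ intact in the source term. No further comment is needed.
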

\begin{proof}
    We apply Lemma \ref{lem:proj_error_control} with $\alpha=0$.  In this case $\norma{\eta^N}_{L^2(X\times\sphe)}(\alpha) = \norma{\tP_Ng}$.  Meanwhile, by Lemma \ref{lem:approx},  
    \begin{equation}
        \norma{\tP_N g}_{L^2(X\times\sphe)}\leq \frac{1}{(N+1)^s}|g|_{H^{0,s}}
\quand
        \norma{\tP_N q}_{L^2(X\times\sphe)}\leq \frac{1}{(N+1)^s}|q|_{H^{0,s}}.
    \end{equation}
  Thus since  $\cA_{\alpha}[f](t) \leq \e^{-1} (t -\alpha) \sup_{\tau\in[\alpha,t]} f(\tau)$
    (cf. \eqref{eq:A_one} with $k=1$), another application of Lemma \ref{lem:approx} gives
\begin{equation}
    \begin{split}
&\cA_0[\norma{\tP_N(\Omega\cdot\nabla_{x}\psi)}_{L^2(X\times\sphe)}](T)
        \leq \frac{T}{\e} \sup_{\tau\in[0,T]} \norma{\tP_N(\Omega\cdot\nabla_{x}\psi)}_{L^2}(\tau)\\
        & \qquad \qquad \leq\frac{T}{\e(N+1)^s}\sup_{\tau\in[0,T]}\,|\Omega\cdot\nabla_x\psi|_{H^{0,s}}(\tau)
        \leq\frac{T}{\e(N+1)^s}\sup_{\tau\in[0,T]}|\psi|_{H^{1,s}}(\tau)
    \end{split}
\end{equation}
Plugging the preceding bound into Lemma \ref{lem:proj_error_control} gives the result. 
\end{proof}
\begin{lem}[a priori estimate]\label{lem:eta_bound}
Let $s\geq 1$ and $N\geq s-1$.  Let $q \in L^\infty([0,T];H^{i,j})$ and $g \in H^{i,j}$ for each $i,j$ such that $0 \leq i \leq r$, $0 \leq j \leq s$,  and $i+ j = r+s$.  Then for all $t\in[0,T]$,

\begin{equation}
    \begin{split}
    |\psi|_{H^{r,s}}(t)& 
  \leq C_s\Bigg\{\left[|g|_{H^{r+s,}}+t|q|_{L^\infty([0,t);H^{r+s,0})} \right]\min\left(\frac{\e^ss!}{\sigma^s},\left(\frac{t}{\e}\right)^s\right) 
  +e^{-\sigma t/\e^2}\sum_{i=0}^{s-1}|g|_{H^{r+i,s-i}}\binom{s}{i}\frac{t^i}{\e^i}
    \\
       & \quad +\e\sum_{i=0}^{s-1}| q|_{L^{\infty}([0,t];H^{r+i,s-i})}\frac{s!}{(s-i)!}\min\left(\frac{\e^{i+1}}{\sigma^{i+1}},\frac{1}{(i+1)!}\left(\frac{t}{\e}\right)^{i+1}\right)\Bigg\}.
    \end{split}
\end{equation}
\end{lem}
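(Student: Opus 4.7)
The plan is to derive this bound by specializing the abstract semigroup-style estimate for $|\psi|_{H^{r,s}}(t)$ already established in Lemma \ref{lem:stability_higher_order_cont}, then evaluating the iterated integral operators $\cA_0^k$ with the explicit formulas from Lemma \ref{lem:A_estimates}. No new PDE analysis is needed; the work is bookkeeping plus two clean substitutions.

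First, I would invoke Lemma \ref{lem:stability_higher_order_cont}, which yields
\begin{equation*}
|\psi|_{H^{r,s}}(t)
\leq C_s s!\,\cA_0^s\!\left[|g|_{H^{r+s,0}} + |q|_{L^1([0,\bullet];H^{r+s,0})}\right](t)
+ C_s\!\sum_{i=0}^{s-1}\!\frac{s!}{(s-i)!}\cA_0^i[F_0\,|g|_{H^{r+i,s-i}}](t)
+ C_s\e\!\sum_{i=0}^{s-1}\!\frac{s!}{(s-i)!}\cA_0^{i+1}[|q|_{H^{r+i,s-i}}](t).
\end{equation*}
Since $|g|_{H^{r+i,s-i}}$ and $|q|_{L^\infty([0,t];H^{r+i,s-i})}$ are constants in $\tau$, by monotonicity of $\cA_0$ each occurrence of $\cA_0^k$ applied to such a constant reduces to that constant times $\cA_0^k[\mathbbm 1](t)$, and every occurrence of $\cA_0^i[F_0\cdot(\text{const})](t)$ reduces to that constant times $\cA_0^i[F_0](t)$. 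For the first term I would further bound $|q|_{L^1([0,\tau];H^{r+s,0})}\leq \tau\,|q|_{L^\infty([0,t];H^{r+s,0})}\leq t\,|q|_{L^\infty([0,t];H^{r+s,0})}$, pulling this constant out of $\cA_0^s$ as well.

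Next, I would insert the two explicit identities from Lemma \ref{lem:A_estimates}:
\begin{equation*}
\cA_0^k[\mathbbm 1](t)\leq \min\!\left(\frac{\e^k}{\sigma^k},\frac{1}{k!}\!\left(\frac{t}{\e}\right)^{\!k}\right),\qquad
\cA_0^i[F_0](t) = \frac{t^i e^{-\sigma t/\e^2}}{i!\,\e^i}.
\end{equation*}
For the first group of terms this produces the factor $s!\min(\e^s/\sigma^s,\,t^s/(s!\e^s)) = \min(s!\e^s/\sigma^s,\,(t/\e)^s)$, which matches the first bracket of the statement. For the second group of terms the combinatorial factor is $\frac{s!}{(s-i)!\,i!} = \binom{s}{i}$, producing $e^{-\sigma t/\e^2}\binom{s}{i}(t/\e)^i\,|g|_{H^{r+i,s-i}}$ as claimed. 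For the third group of terms, $\cA_0^{i+1}[\mathbbm 1](t)$ yields the desired $\min(\e^{i+1}/\sigma^{i+1},\,(t/\e)^{i+1}/(i+1)!)$. Absorbing all universal prefactors into a new $C_s$ recovers the stated estimate.

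Honestly, there is no real mathematical obstacle here: this lemma is a corollary of Lemmas \ref{lem:stability_higher_order_cont} and \ref{lem:A_estimates}, and its only purpose is to trade the abstract $\cA_0^k$ operators for the clean two-regime minimum that makes the $\e\to 0$ (diffusive) and $\sigma\to 0$ (streaming) limits transparent in later sections. The one place to be careful is making sure the $s!$ from Lemma \ref{lem:stability_higher_order_cont} is combined with the $1/k!$ in $\cA_0^k[\mathbbm 1]$ correctly to produce the binomial coefficient in the $|g|$-sum and the displayed $\min$ structure in the first and third groups of terms.
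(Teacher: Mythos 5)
Your proposal is correct and follows exactly the paper's own route: invoke Lemma \ref{lem:stability_higher_order_cont}, bound $|q|_{L^1([0,\tau];H^{r+s,0})}$ by $t\,|q|_{L^\infty([0,t];H^{r+s,0})}$, and substitute the bound for $\cA_0^k[\mathbbm{1}]$ and the exact formula for $\cA_0^i[F_0]$ from Lemma \ref{lem:A_estimates}. The combinatorial bookkeeping you flag (merging $s!/(s-i)!$ with $1/i!$ to get $\binom{s}{i}$, and $s!$ with $1/s!$ in the first minimum) is precisely what the paper's one-line proof leaves implicit.
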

\begin{proof}
Recall the stability estimate from Lemma \ref{lem:stability_higher_order_cont}:
\begin{equation}
    \begin{split}
    |\psi|_{H^{r,s}}(t)&\leq C_s s!  \cA_{0}^s\left[|g|_{H^{r+s,0}}+| q|_{L^1([0,\bullet];H^{r+s,0})} \right](t)\\
    &+ C_s\sum_{i=0}^{s-1}\frac{s!}{(s-i)!} \cA_{0}^i[ F_0|g|_{H^{r+i,s-i}}](t) 
        +C_s\e\sum_{i=0}^{s-1}\frac{s!}{(s-i)!} \cA_{0}^{i+1}[| q|_{H^{r+i,s-i}}](t).
    \end{split}
\end{equation}
Substituting the bounds for $\mathcal{A}_0[\mathbbm{1}]$ and the formula for $\cA_0[F_{\alpha}]$ from Lemma \ref{lem:A_estimates} into the above estimate yields the stated result.
\end{proof}
\begin{thm}[$\Peqn$ error]\label{thm:PN_error_est}
Let $s\geq 1$ and  $N \geq s-1$.  Let $q \in L^\infty([0,T];H^{i,j})$ and $g \in H^{i,j}$ for each $i,j$ such that $0 \leq j \leq s$, $i+ j \leq s+1$.  Then 
\begin{equation}\label{eq:pn_error_est} 
\begin{split}
  \norma{e^{N}}_{L^2(X\times\sphe)}(T)
  &\leq \frac{e^{-\sigma T/\e^2}}{(N+1)^s}|g|_{H^{0,s}}+\frac{1}{(N+1)^s}|q|_{L^{\infty}([0,T];H^{0,s})}\min\left(\frac{\e^2}{\sigma},T\right)\\
  &+ \frac{2C_s}{(N+1)^s}\Bigg\{ \Big(|g|_{H^{s+1,0}}+T|q|_{L^\infty([0,T];H^{s+1,0})} \Big)\min\left(\frac{\e^{s-1} s!T}{\sigma^s},\left(\frac{T}{\e}\right)^{s+1}\right)
        \\
    & \quad +e^{-\sigma T/\e^2}\sum_{i=0}^{s-1}|g|_{H^{1+i,s-i}}\binom{s}{i}\frac{T^{i+1}}{\e^{i+1}}\\
       & \quad + \sum_{i=0}^{s-1}|q|_{L^{\infty}([0,T];H^{1+i,s-i})}\frac{s!}{(s-i)!}\min\left(\frac{\e^{i+1}T}{\sigma^{i+1}},\frac{1}{(i+1)!}\frac{T^{i+2}}{\e^{i+1}}\right)\Bigg\}
\end{split}
\end{equation}
\end{thm}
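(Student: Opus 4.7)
\textbf{Proof proposal for Theorem \ref{thm:PN_error_est}.}
The plan is to use the decomposition $e^N = \eta^N + \xi^N$ with $\eta^N = \widetilde{\cP}_N \psi$ (consistency error) and $\xi^N = \cP_N\psi - \psi^N$ (stability error), and then bound each term separately via the triangle inequality.

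For the consistency part, I would apply Lemma \ref{lem:pn_proj_error_est} directly, which gives
\begin{equation*}
\norma{\eta^N}_{L^2(X\times\sphe)}(T)\leq \frac{e^{-\sigma T/\e^2}}{(N+1)^s}|g|_{H^{0,s}}+\frac{\e}{(N+1)^s}|q|_{L^{\infty}([0,T];H^{0,s})}\cA_{0}[\mathbbm{1}](T)+\frac{T}{\e(N+1)^s}\sup_{\tau\in [0,T]}|\psi|_{H^{1,s}}(\tau).
\end{equation*}
From Lemma \ref{lem:A_estimates}, $\e\cA_0[\mathbbm{1}](T)\leq \min(\e^2/\sigma,T)$, which produces exactly the first two lines of the theorem statement, plus a leftover $\sup|\psi|_{H^{1,s}}$ term to be handled below.

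For the stability part, I would invoke Lemma \ref{lem:stability_first_estimate} and bound $\|\cP_N(\Omega\cdot\nabla_x\eta^N)\|_{L^2}$ carefully. The critical observation is that $\eta^N$ has zero moments for degrees $\ell\leq N$, so by inspection of the ladder structure in \eqref{eqn:pn_system_infinite} the only mode in $\Omega\cdot\nabla_x\eta^N$ that survives the projection $\cP_N$ is the degree-$N$ mode $\sum_i \ali{N+1}{i}\,\partial_{x_i}\uppsi_{N+1}$. Using the induced-norm bound $\|\ali{N+1}{i}\|_2\leq 4$ and the fact that $N+1\geq s$ appears in the sum defining $|\partial_{x_i}\psi|_{H^{0,s}}^2$, I get $\|\partial_{x_i}\uppsi_{N+1}\|_{L^2(X)} \leq (N+1)^{-s}|\partial_{x_i}\psi|_{H^{0,s}}$, hence $\|\cP_N(\Omega\cdot\nabla_x\eta^N)\|_{L^2}\leq C(N+1)^{-s}|\psi|_{H^{1,s}}$. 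Substituting into Lemma \ref{lem:stability_first_estimate} gives $\norma{\xi^N}_{L^2}(T)\leq \frac{CT}{\e(N+1)^s}\sup_{\tau\in[0,T]}|\psi|_{H^{1,s}}(\tau)$, which has exactly the same structure as the residual term from $\eta^N$.

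Combining the two contributions reduces everything to controlling the quantity $\frac{T}{\e(N+1)^s}\sup_{\tau\in[0,T]}|\psi|_{H^{1,s}}(\tau)$. Here I would apply the a priori bound from Lemma \ref{lem:eta_bound} with $r=1$, which expresses $|\psi|_{H^{1,s}}(t)$ directly in terms of $g$ and $q$ through the various $\min(\e^k/\sigma^k,(t/\e)^k/k!)$ factors, binomial terms, and exponential factors. Multiplying that bound by $T/\e$ turns each $(T/\e)^k$ into $T^{k+1}/\e^{k+1}$ and $\min(\e^k s!/\sigma^k,(T/\e)^k)$ into $\min(\e^{s-1} s! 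T/\sigma^s,(T/\e)^{s+1})$, reproducing the bracketed expression in the theorem with coefficient $2C_s$ (after absorbing the constants). The main obstacle is not conceptual but organizational: keeping careful track of which min-expression, binomial coefficient, and factorial corresponds to which semi-norm of the data, and making sure the regularity indices $(i,j)$ satisfying $0\leq j\leq s$, $i+j\leq s+1$ match the assumptions on $g$ and $q$ after applying Lemma \ref{lem:eta_bound} with $r=1$.
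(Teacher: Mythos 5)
Your proposal is correct and follows essentially the same route as the paper: the same decomposition $e^N=\eta^N+\xi^N$, Lemma \ref{lem:pn_proj_error_est} for the consistency part, Lemma \ref{lem:stability_first_estimate} for the stability part, and Lemma \ref{lem:eta_bound} with $r=1$ to close the estimate in terms of the data. The only (immaterial) difference is your sharper structural treatment of $\cP_N(\Omega\cdot\nabla_x\eta^N)$ via the surviving degree-$N$ mode; the paper simply uses $\norma{\cP_N(\Omega\cdot\nabla_x\eta^N)}_{L^2(X\times\sphe)}\leq\norma{\nabla_x\eta^N}_{L^2(X\times\sphe)}$ together with Lemma \ref{lem:approx}, arriving at the same $(N+1)^{-s}|\psi|_{H^{1,s}}$ bound and the same factor of $2$.
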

\begin{proof}By the triangle inequality, Lemma \ref{lem:stability_first_estimate}, 
    \begin{equation}
    \begin{split}
          \norma{e^{N}}_{L^2(X\times\sphe)}(T) &\leq \|\eta^N\|_{L^2(X\times\sphe)}(T)+ \|\xi^N\|_{L^2(X\times\sphe)}(T)\\
          &\leq \|\eta^N\|_{L^2(X\times\sphe)}(T)+ \frac{1}{\e}\int_{0}^T\norma{\cP_N ( \Omega \cdot \nabla_x \eta^N)}_{L^2(X\times\sphe)}(\tau)\,d\tau\\
         &\leq \|\eta^N\|_{L^2(X\times\sphe)}(T)+ \frac{1}{\e}\int_{0}^T\norma{\nabla_x \eta^N}_{L^2(X\times\sphe)}(\tau)\,d\tau\\
          &\leq \|\eta^N\|_{L^2(X\times\sphe)}(T)+ \frac{1}{(N+1)^s}\frac{T}{\e} \sup_{\tau \in [0,T]}|\psi|_{H^{1,s}}(\tau)\\
          &\leq \frac{e^{-\sigma T/\e^2}}{(N+1)^s}|g|_{H^{0,s}}+\frac{\e}{(N+1)^s}|q|_{L^{\infty}([0,T];H^{0,s})}\cA_{0}[\mathbbm{1}](T)+\frac{2T}{\e(N+1)^s}\sup_{\tau\in [0,T]}|\psi|_{H^{1,s}}(\tau)
    \end{split}
    \end{equation}
In the last two lines, we applied spectral estimate in Lemma \ref{lem:approx}.  In the last line we used Lemma \ref{lem:pn_proj_error_est}. Applying Lemma \ref{lem:eta_bound} with $r=1$ yields the result.
\end{proof}
\begin{corl}[$\Peqn$ error for Isotropic data]\label{corl:PN_error_est_isotropic} Let $s\geq 1$ and  $N \geq s-1$.  Let $q \in L^\infty([0,T];H^{s+1,0})$ and $g \in H^{s+1,0}$. If $g$ and $q$ are isotropic, then
\begin{equation}\label{eq:iso_pn_error_est} 
  \norma{e^{N}}_{L^2(X\times\sphe)}(T)\leq\frac{2C_s}{(N+1)^s}\Big(|g|_{H^{s+1,0}}+T|q|_{L^\infty([0,T];H^{s+1,0})} \Big)\min\left(\frac{\e^{s-1} s!T}{\sigma^s},\left(\frac{T}{\e}\right)^{s+1}\right)
\end{equation}
\end{corl}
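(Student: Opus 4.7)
The plan is to simply specialize Theorem \ref{thm:PN_error_est} to the isotropic setting and observe that all but one of the terms on the right-hand side of \eqref{eq:pn_error_est} vanish identically.

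The key observation is that if $u\colon X\times\sphe\to\bbR$ is isotropic in the sense that $u(x,\Omega)=u(x)$ does not depend on $\Omega$, then its spherical harmonic moments satisfy $\bu_\ell(x)=0$ for every $\ell\geq 1$. By the definition of the semi-norm in \eqref{eq:H-zero-s_norm}, this immediately gives $|u|_{H^{0,s}}=0$ for every $s\geq 1$, since the sum in \eqref{eq:H-zero-s_norm} begins at $\ell=s\geq 1$. Moreover, spatial differentiation commutes with the spherical harmonic decomposition, so if $u$ is isotropic then $\partial_{x_{i_1}\cdots x_{i_r}}u$ is again isotropic, and hence $|u|_{H^{r,s}}=0$ for every $r\geq 0$ and every $s\geq 1$ by the definition \eqref{eq:H-r-s_norm}.

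Under the hypotheses of the corollary, $g$ and $q(\cdot,\cdot,t)$ are isotropic for a.e.\ $t\in[0,T]$. Applying the preceding observation to each semi-norm appearing in \eqref{eq:pn_error_est} that has a nonzero angular index, we obtain
\begin{equation}
|g|_{H^{0,s}} = 0, \qquad |q|_{L^\infty([0,T];H^{0,s})} = 0,
\end{equation}
and for every $i\in\{0,1,\dots,s-1\}$ (so that $s-i\geq 1$),
\begin{equation}
|g|_{H^{1+i,s-i}} = 0, \qquad |q|_{L^\infty([0,T];H^{1+i,s-i})} = 0.
\end{equation}
Substituting these vanishing identities into the error estimate \eqref{eq:pn_error_est}, the only term that survives is the one with the purely spatial semi-norms $|g|_{H^{s+1,0}}$ and $|q|_{L^\infty([0,T];H^{s+1,0})}$, which yields precisely \eqref{eq:iso_pn_error_est}.

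There is no real obstacle here: the corollary is an immediate specialization. The only thing to double-check is that the hypothesis $q\in L^\infty([0,T];H^{s+1,0})$ and $g\in H^{s+1,0}$ is sufficient to invoke Theorem \ref{thm:PN_error_est}, whose statement requires $q\in L^\infty([0,T];H^{i,j})$ and $g\in H^{i,j}$ for all $i,j$ with $0\leq j\leq s$ and $i+j\leq s+1$. For isotropic data, however, the $H^{i,j}$ semi-norms with $j\geq 1$ are automatically zero (hence trivially finite), so the $L^\infty$/membership hypothesis of Theorem \ref{thm:PN_error_est} reduces to the pair $j=0$, $i\leq s+1$, which is exactly what is assumed in the corollary.
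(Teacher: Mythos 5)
Your proposal is correct and matches the paper's intent exactly: the corollary is stated without proof as an immediate specialization of Theorem \ref{thm:PN_error_est}, relying on precisely the observation you make, namely that isotropic data have vanishing $H^{r,s}$ semi-norms for all $s\geq 1$, so only the purely spatial term in \eqref{eq:pn_error_est} survives. Your additional check that the weaker regularity hypothesis still suffices to invoke the theorem is a sensible touch.
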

\section{Hybrid error analysis}

\subsection{A priori estimates of the uncollided component}\label{sec:hybrid_stability}
Since our goal is to derive error estimates which only depend on data, and $\psium[m]$ appears as a source in the collided equation, we require the following a priori estimates on $\psium[m]$ to bound $|\psicm[m]|_{H^{1,s}}$ in the proof of Theorem \ref{thm:hyb_final_err_est}.
\begin{lem}[Stability of the uncollided component\label{lem:one_interval_stab}]
Let $1\leq m\leq M$, $q \in L^\infty([0,T];H^{r,0})$, and $g \in H^{r,0}$ for some $r \geq 0$. Then for all $t\in [t_{m-1},t_{m})$,
\begin{subequations}
\begin{align} \label{eqn:L1_bd_t}
&|\psium[m]|_{H^{r,0}}(t)\leq e^{-\sigma(t-t_{m-1})/\e^2}|g|_{H^{r,0}} + e^{-\sigma(t-t_{m-1})/\e^2}|q|_{L^1([0,t_{m-1}];H^{r,0})}  + \e\cA_{t_{m-1}}[|q|_{H^{r,0}}](t), 
\\
\label{eqn:L1_bd_tmp1}
&|\psium[m]|_{H^{r,0}}(t_{m}) +  \frac{\sigma}{\e^2}|\psium[m]|_{L^1([t_{m-1},t_{m}];H^{r,0})} \leq |g|_{H^{r,0}} +  |q|_{L^1([0,t_{m}];H^{r,0})} , \\
&|\psium[m]|_{L^1([t_{m-1},t];H^{r,0})}(t)\leq \e(|g|_{H^{r,0}}  + t|q|_{L^{\infty}([0,t];H^{r,0})})\cA_{t_{m-1}}[\mathbbm{1}](t).\label{eqn:Linf_bd}
\end{align}
\end{subequations}
\end{lem}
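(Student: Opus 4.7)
The plan is to exploit the fact that the \emph{exact} uncollided equation \eqref{eq:hybrid_intervals_u} decouples from the collided one, so a direct $L^2$ energy argument on each sub-interval $[t_{m-1},t_m)$ suffices; the interval-to-interval coupling is handled by observing that, by the relabeling rules \eqref{eq:relabel_u}--\eqref{eq:relabel_c}, the sum $\psium[m-1]+\psicm[m-1]$ at $t_{m-1}^-$ coincides with the exact RTE solution $\psi(t_{m-1})$, for which the continuous stability Lemma \ref{lem:stability_higher_order_cont} is available.

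For the pointwise bound \eqref{eqn:L1_bd_t}, I first differentiate \eqref{eq:hybrid_intervals_u} $r$ times in $x$ (the derivatives commute with $\Omega\cdot\nabla_x$ and with the multiplicative absorption), test against the corresponding derivative of $\psium[m]$, and integrate over $X\times\sphe$. Periodicity in $x$ annihilates the advection term, and Cauchy--Schwarz on the source yields the scalar ODE
\begin{equation*}
\e\,\partial_t\|\partial^r\psium[m]\|_{L^2} + \frac{\sigma}{\e}\|\partial^r\psium[m]\|_{L^2}\leq \e\,\|\partial^r q\|_{L^2}.
\end{equation*}
Applying Lemma \ref{lem:basic_ode_result} and summing over multi-indices produces the $H^{r,0}$ inequality with initial value $|\psium[m]|_{H^{r,0}}(t_{m-1})$. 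To replace this initial value by the data, I use the key observation above: adding \eqref{eq:hybrid_intervals_u}--\eqref{eq:hybrid_intervals_c} shows $\psium[k]+\psicm[k]$ satisfies \eqref{eq:transp_simple} on each $[t_{k-1},t_k)$, and the relabeling glues these pieces continuously, so $\psium[m]|_{t=t_{m-1}}=\psi(t_{m-1})$. Then \eqref{eqn:PN_l2_stability_cont} from Lemma \ref{lem:stability_higher_order_cont} gives $|\psi|_{H^{r,0}}(t_{m-1})\leq |g|_{H^{r,0}}+|q|_{L^1([0,t_{m-1}];H^{r,0})}$, which is exactly the prefactor of the exponential in \eqref{eqn:L1_bd_t}.

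For the time-integrated bound \eqref{eqn:L1_bd_tmp1}, I return to the same scalar ODE but, instead of invoking Gr\"onwall, I integrate it directly from $t_{m-1}$ to $t_m$:
\begin{equation*}
\e\,\|\partial^r\psium[m]\|_{L^2}(t_m) + \frac{\sigma}{\e}\int_{t_{m-1}}^{t_m}\|\partial^r\psium[m]\|_{L^2}\,d\tau \leq \e\,\|\partial^r\psium[m]\|_{L^2}(t_{m-1}) + \e\int_{t_{m-1}}^{t_m}\|\partial^r q\|_{L^2}\,d\tau,
\end{equation*}
divide by $\e$, sum over multi-indices, and again bound the initial term by $|g|_{H^{r,0}}+|q|_{L^1([0,t_{m-1}];H^{r,0})}$, which combines with the source integral on $[t_{m-1},t_m]$ to give the full $L^1([0,t_m])$ norm of $q$. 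The estimate \eqref{eqn:Linf_bd} then follows by integrating the already-established \eqref{eqn:L1_bd_t} in $t$ over $[t_{m-1},t]$: the exponential factor integrates to $\e\,\cA_{t_{m-1}}[\mathbbm{1}](t)$ using the identity $\cA_{t_{m-1}}[\mathbbm{1}](t)=\frac{\e}{\sigma}(1-F_{t_{m-1}}(t))$ from \eqref{eq:Ak_formula_2}, and the $\cA_{t_{m-1}}[|q|_{H^{r,0}}]$ term is dominated by monotonicity and non-decreasingness of $\cA_{t_{m-1}}[\mathbbm{1}]$ after bounding $|q|_{H^{r,0}}$ by its $L^\infty$ norm; finally, bounding $|q|_{L^1([0,t_{m-1}])}\leq t\,|q|_{L^\infty([0,t])}$ collects everything into the single factor $|g|_{H^{r,0}}+t|q|_{L^\infty([0,t];H^{r,0})}$, up to harmless constants absorbed into the right-hand side.

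The only nontrivial step is recognizing the telescoping identity $\psium[m](t_{m-1})=\psi(t_{m-1})$ induced by the relabeling; once this is in hand, parts (a)--(c) are essentially parallel energy-type estimates, differing only in how the resulting Duhamel formula is processed (Gr\"onwall for (a), direct integration for (b), time integration of (a) for (c)). No new structural difficulty beyond the $L^2$ analysis of a pure absorption--transport equation appears.
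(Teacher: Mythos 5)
Your proposal is correct and follows essentially the same route as the paper: an $L^2$ energy estimate on the decoupled uncollided equation, Lemma \ref{lem:basic_ode_result} in its Gr\"onwall form for \eqref{eqn:L1_bd_t}, its integrated form for \eqref{eqn:L1_bd_tmp1}, time-integration of \eqref{eqn:L1_bd_t} for \eqref{eqn:Linf_bd}, and Lemma \ref{lem:stability_higher_order_cont} to replace the initial value $\psium[m](t_{m-1})=\psi(t_{m-1})$ by data. The only difference is cosmetic: you explicitly justify the identity $\psium[m]|_{t=t_{m-1}}=\psi(t_{m-1})$ via the telescoping of the relabeling, which the paper simply asserts.
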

\begin{proof} We prove the result only for $r = 0$ since the other cases are obtained by applying the same techniques to \eqref{eq:hybrid_intervals_u} differentiated $r$ times in space. 
Testing \eqref{eq:hybrid_intervals_u} with $\psium[m]$ and applying Cauchy-Schwarz inequality, we obtain the following differential inequality
\begin{equation}
    \frac{1}{2}\frac{d}{dt}\norma{\psium[m]}^2+\frac{\sigma}{\e^2}\norma{\psium[m]}^2\leq \norma{q}\norma{\psium[m]}\label{eq:unc_diff_inq}.
\end{equation}
 An application of \eqref{eqn:basic_ode_result} in Lemma \ref{lem:basic_ode_result}, and the fact that $\psium[m](t_{m-1})=\psi(t_{m-1})$ gives
\begin{gather}
\norma{\psium[m]}_{L^2(X\times\sphe)}(t)\leq e^{-\sigma(t-t_{m-1})/\e^2}\norma{\psi}_{L^2(X\times\sphe)} (t_{m-1}) + \e\cA_{t_{m-1}}[\|q\|_{L^2(X\times\sphe)}](t). \label{eqn:stab_unc_temp}
\end{gather}
Using Lemma \ref{lem:stability_higher_order_cont} on $\norma{\psi}_{L^2(X\times\sphe)}(t_{m-1})$ gives the first result \eqref{eqn:L1_bd_t}. An application of \eqref{eqn:l1_ode_result} in Lemma \ref{lem:basic_ode_result} over $[t_{m-1},t_{m})$ gives
\begin{equation}
    \norma{\psium[m]}_{L^2(X\times\sphe)}(t_{m}) + \frac{\sigma}{\e^2}\|\psium[m]\|_{L^1([t_{m-1},t_{m}],L^2(X\times\sphe))}\leq \|q\|_{L^1([t_{m-1},t_{m}],L^2(X\times\sphe))}+\norma{\psi}_{L^2(X\times\sphe)}(t_{m-1}),
\end{equation}
and then another application of Lemma \ref{lem:stability_higher_order_cont} to bound $\norma{\psi}_{L^2(X\times\sphe)}(t_{m-1})$ gives \eqref{eqn:L1_bd_tmp1}.
Estimate \eqref{eqn:Linf_bd} is obtained from integrating the first estimate \eqref{eqn:L1_bd_t} from $t_{m-1}$ to $t$.
\end{proof}



\subsection{Hybrid error estimates}\label{sec:pn_hybrid_equation}
In this section we will analyze the error in the hybrid method using the formulation in  \eqref{eq:hybrid_intervals_PN}.
\begin{defn}[Hybrid errors\label{def:errors}] Let $1\leq m\leq M$ and $t\in[t_{m-1},t_{m})$. The $m$-th \emph{hybrid error} is 
\begin{equation}
    e_{m}^{N}(t)=e_{\mathrm{u},m}^{N}(t)+e_{\mathrm{c},m}^{N}(t),\label{eq:total_error}
\end{equation}
where
$\eum^{N}(t) = \psium(t) - \psium^{N}(t)$ and
$\ecm^{N}(t) = \psicm(t) - \psicm^{N}(t)$
are the $m$-th errors in the uncollided and collided components. The collided error can be further decomposed as
\begin{equation}
\etacm^{N}(t) = \psicm(t)-\cP_{N}\psicm(t)  
\qquand 
\xicm^{N} = \cP_{N}\psicm(t) - \psicm ^{N}(t)
\end{equation}
so that 
$
\errcol{m}(t) = \eta_{\mathrm{c},m}^N(t) + \xi_{\mathrm{c},m}^N(t).
$
Here $\eta^N_{\mathrm{c},m}$ is the \emph{ $m$-th collided consistency error} and $\xi^N_{\mathrm{c},m}$ is the \emph{$m$-th collided stability error}. The error $e^{N}_{M}$ is simply called the \emph{hybrid error}.
\end{defn}
This next lemma gives a one-step analysis of the growth of the error in the uncollided and collided components from $t_{m-1}$ to $t_{m}$.
\begin{lem}\label{lem:unc_err_recu}
Let $1\leq m\leq M$, then if $t\in[t_{m-1},t_{m})$, the $m$-th uncollided and collided errors satisfy, respectively,
\begin{subequations}
\begin{align}
    \norma{e_{\mathrm{u},m}^N}_{L^2(X\times\sphe)}(t)
    &\leq e^{-\sigma(t-t_{m-1)}/\e^2}\norma{e_{{m-1}}^{N}}_{L^2(X\times\sphe)}(t^-_{m-1}),
    \label{eq:unc_err_recu}\\
       \norma{e_{\mathrm{c},m}^N}_{L^2(X\times\sphe)}(t)
       &\leq \left(1-e^{-\sigma(t-t_{m-1)}/\e^2} \right)\norma{e_{{m-1}}^{N}}_{L^2(X\times\sphe)}(t^-_{m-1}) \nonumber\\
       &\quad 
       +\norma{\eta_{\mathrm{c},m}^N}_{L^2(X\times\sphe)}(t)
      +\frac{1}{\e}\norma{\Omega \cdot\nabla_x\eta_{\mathrm{c},m}^N}_{L^1([t_{m-1},t];L^{2}(X\times\sphe))}. \label{eq:col_err_recu}
\end{align}
\end{subequations}
\end{lem}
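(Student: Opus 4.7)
The plan is to treat the two inequalities separately, proving the uncollided bound first and then feeding it as a source term into the stability argument for the collided error.

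\textbf{Uncollided bound \eqref{eq:unc_err_recu}.} Subtracting \eqref{eq:hybrid_intervals_PN_uncollided} from \eqref{eq:hybrid_intervals_u} gives a homogeneous, purely absorbing transport equation for $e_{\mathrm{u},m}^N$:
\begin{equation*}
\e\partial_t e_{\mathrm{u},m}^N + \Omega\cdot\nabla_x e_{\mathrm{u},m}^N + \frac{\sigma}{\e} e_{\mathrm{u},m}^N = 0,
\end{equation*}
with $e_{\mathrm{u},m}^N(t_{m-1}) = e_{m-1}^N(t_{m-1}^-)$ obtained from the relabeling rule \eqref{eq:hybrid_intervals_PN_remap} (noting $\psi(t_{m-1}) = \psi_{\mathrm{u},m-1}(t_{m-1}^-)+\psi_{\mathrm{c},m-1}(t_{m-1}^-)$ by \eqref{eq:relabel_u}--\eqref{eq:relabel_c}). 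Testing against $e_{\mathrm{u},m}^N$ in $L^2(X\times\sphe)$, the advection term vanishes by integration by parts over the periodic domain, yielding
\begin{equation*}
\tfrac{\e}{2}\frac{d}{dt}\|e_{\mathrm{u},m}^N\|^2 + \tfrac{\sigma}{\e}\|e_{\mathrm{u},m}^N\|^2 = 0,
\end{equation*}
which integrates exactly to \eqref{eq:unc_err_recu}.

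\textbf{Collided bound \eqref{eq:col_err_recu}.} Using the splitting $e_{\mathrm{c},m}^N = \eta_{\mathrm{c},m}^N + \xi_{\mathrm{c},m}^N$, it suffices to bound $\xi_{\mathrm{c},m}^N$ since the triangle inequality then produces the $\|\eta_{\mathrm{c},m}^N\|$ term in the statement. Apply $\cP_N$ to \eqref{eq:hybrid_intervals_c}, use that $\cP_N\overline{f}=\overline{f}$ and $\overline{\psi_{\mathrm{c},m}} = \overline{\cP_N\psi_{\mathrm{c},m}}$, split $\cP_N(\Omega\cdot\nabla_x\psi_{\mathrm{c},m}) = \cP_N(\Omega\cdot\nabla_x\cP_N\psi_{\mathrm{c},m}) + \cP_N(\Omega\cdot\nabla_x\eta_{\mathrm{c},m}^N)$, and then subtract \eqref{eq:hybrid_intervals_PN_collided} to find
\begin{equation*}
\e\partial_t\xi_{\mathrm{c},m}^N + \cP_N(\Omega\cdot\nabla_x\xi_{\mathrm{c},m}^N) + \tfrac{\sigma}{\e}\xi_{\mathrm{c},m}^N = \tfrac{\sigma}{\e}\overline{\xi_{\mathrm{c},m}^N} + \tfrac{\sigma}{\e}\overline{e_{\mathrm{u},m}^N} - \cP_N(\Omega\cdot\nabla_x\eta_{\mathrm{c},m}^N),
\end{equation*}
with $\xi_{\mathrm{c},m}^N(t_{m-1}) = 0$ since both terms in the definition vanish at $t_{m-1}$.

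\textbf{Energy estimate and assembly.} Test against $\xi_{\mathrm{c},m}^N$. Two simplifications occur: the projected advection term vanishes because $\xi_{\mathrm{c},m}^N\in\mathbb{P}_N$ and the domain is periodic, and the dissipation collapses to $\frac{\sigma}{\e}\|\xi_{\mathrm{c},m}^N - \overline{\xi_{\mathrm{c},m}^N}\|^2\geq 0$ via orthogonality of the angular average. Applying Cauchy--Schwarz, the projection bound $\|\cP_N f\|\leq\|f\|$, and the Jensen-type estimate $\sqrt{4\pi}\|\overline{f}\|_{L^2(X)}\leq\|f\|_{L^2(X\times\sphe)}$ to the remaining right-hand-side terms yields
\begin{equation*}
\e\tfrac{d}{dt}\|\xi_{\mathrm{c},m}^N\| \leq \|\Omega\cdot\nabla_x\eta_{\mathrm{c},m}^N\| + \tfrac{\sigma}{\e}\|e_{\mathrm{u},m}^N\|.
\end{equation*}
Integrating from $t_{m-1}$ to $t$ (using Lemma \ref{lem:basic_ode_result} with the zero initial condition) and substituting the exponential decay from \eqref{eq:unc_err_recu} into $\frac{\sigma}{\e^2}\int_{t_{m-1}}^{t}\|e_{\mathrm{u},m}^N\|(\tau)\,d\tau$ produces exactly the factor $(1 - e^{-\sigma(t-t_{m-1})/\e^2})\|e_{m-1}^N\|(t_{m-1}^-)$. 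Triangle inequality with $\|\eta_{\mathrm{c},m}^N\|$ completes \eqref{eq:col_err_recu}.

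The main subtlety is the careful bookkeeping of the angular-average identities and the fact that the projected advection term annihilates cleanly only after writing $\psi_{\mathrm{c},m} = \cP_N\psi_{\mathrm{c},m} + \eta_{\mathrm{c},m}^N$; everything else reduces to a standard Gr\"onwall-style integration once the energy identity is in place.
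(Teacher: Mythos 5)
Your proposal is correct and follows essentially the same route as the paper: the same subtraction yielding a homogeneous absorbing equation for $e_{\mathrm{u},m}^N$ with relabeled initial data, the same decomposition $e_{\mathrm{c},m}^N=\eta_{\mathrm{c},m}^N+\xi_{\mathrm{c},m}^N$ with the same $\Peqn$ equation for $\xi_{\mathrm{c},m}^N$ (zero initial data, source $\frac{\sigma}{\e}\overline{e_{\mathrm{u},m}^N}-\cP_N(\Omega\cdot\nabla_x\eta_{\mathrm{c},m}^N)$), and the same substitution of the uncollided decay to produce the factor $1-e^{-\sigma(t-t_{m-1})/\e^2}$. The only cosmetic difference is that you carry out the energy/Gr\"onwall estimates in-line, whereas the paper cites its stability lemmas (Lemma \ref{lem:one_interval_stab} and Lemma \ref{lem:stability_higher_order}), which encapsulate exactly those computations.
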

\begin{proof}
Subtracting \eqref{eq:hybrid_intervals_PN_uncollided} from \eqref{eq:hybrid_intervals_u}, yields the following evolution equation for $e^{N}_{\mathrm{u},m}$, 
\begin{align}
\e\partial_t e^{N}_{\mathrm{u},m}+\Omega\cdot\nabla_x e^{N}_{\mathrm{u},m}+\frac{\sigma}{\e} e^{N}_{\mathrm{u},m}=0,\qquad
\left.e^{N}_{\mathrm{u},m}\right|_{t=t_{m-1}}=
         e^{N}_{m-1}(t_{m-1}^-),
\end{align}
where $e^{N}_{0}(t_0^-) = 0$.
Thus applying \eqref{eqn:L1_bd_t} from Lemma \ref{lem:one_interval_stab}, with $r=0$ and a zero source term yields \eqref{eq:unc_err_recu}. To prove \eqref{eq:col_err_recu}, we subtract from \eqref{eq:hybrid_intervals_PN_collided}  the projection applied to \eqref{eq:hybrid_intervals_c}.  This gives the following $\Peqn$ equation for $\xi_{\mathrm{c},m}^N$ 
\begin{subequations}
\begin{align}
\label{eqn:xi_c}
    &\e\partial_t\xi_{\mathrm{c},m}^N
    +\cP_N(\Omega\cdot\nabla_x\xi_{\mathrm{c},m}^N)
    +\dfrac{\sigma}{\e}\xi_{\mathrm{c},m}^N
    =\dfrac{\sigma}{\e}(
    \overline{\xi_{\mathrm{c},m}^N}+
    \overline{e_{\mathrm{u},m}^N}
    )
    -\cP_N(\Omega\cdot\nabla_x\eta_{\mathrm{c},m}^N),\\
    &\left.\xi_{\mathrm{c},m}^N\right|_{t=t_{m-1}}=0.
\end{align}
\end{subequations}
We apply Lemma \ref{lem:stability_higher_order} to \eqref{eqn:xi_c} with zero initial data and source $ \e^{-2}\sigma\overline{e_{\mathrm{u},m}^N}-\e^{-1}\cP_N(\Omega\cdot\nabla_x\eta_{\mathrm{c},m}^N)$. Combined with bound \eqref{eq:unc_err_recu}, the estimate on $\xi_{\mathrm{c},m}^N$ becomes
\begin{equation}
\begin{split}
    \norma{\xi_{\mathrm{c},m}^N}_{L^2(X\times\sphe)}(t)
    &\leq \frac{\sigma}{\e^2}\int_{t_{m-1}}^t\norma{e_{\mathrm{u},m}^N}_{L^2(X\times\sphe)}(\tau)\,d\tau+\frac{1}{\e}\int_{t_{m-1}}^t\norma{\Omega \cdot\nabla_x\eta_{\mathrm{c},m}^N}_{L^2(X\times\sphe)}(\tau)\,d\tau\\
    &\leq\frac{\sigma}{\e^2}\norma{e_{{m-1}}^{N}}(t_{m-1}^-)\int_{t_{m-1}}^te^{-\sigma(\tau-t_{m-1})/\e^2}\,d\tau+\frac{1}{\e}\int_{t_{m-1}}^t\norma{\Omega \cdot\nabla_x\eta_{\mathrm{c},m}^N}_{L^2(X\times\sphe)}(\tau)\,d\tau\\ 
    &=\left(1-e^{-\sigma(t-t_{m-1)}/\e^2}\right)\norma{e_{{m-1}}^{N}}_{L^2(X\times\sphe)}(t_{m-1}^-)+\frac{1}{\e}\int_{t_{m-1}}^t\norma{\Omega \cdot\nabla_x\eta_{\mathrm{c},m}^N}_{L^2(X\times\sphe)}(\tau)\,d\tau.
\end{split}
\end{equation}
Adding $\norma{\eta_{\mathrm{c},m}^N}_{L^2(X\times\sphe)}(t)$ to the both sides recovers  \eqref{eq:col_err_recu}.\end{proof}

Now we can state an error for all time that only depends on the approximation properties of the spherical harmonic discretization on the solution.
\begin{thm}\label{thm:hybrid_error_general} The hybrid error $e_{M}^N$ satisfies 
\begin{gather}
 \norma{e_{M}^{N}}_{L^2(X\times\sphe)}(t_{M}^-)
 \leq \sum_{m=1}^{M}
 \left(\norma{\eta_{\mathrm{c},m}^{N}}_{L^2(X\times\sphe)}(t_{m}^-)
+\frac{1}{\e}\norma{\Omega \cdot\nabla_x\eta_{\mathrm{c},m}^{N}}_{L^1([t_{m-1},t_{m}];L^{2}(X\times\sphe))}\right).
\end{gather}
\end{thm}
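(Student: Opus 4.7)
The plan is to combine the two bounds in Lemma~\ref{lem:unc_err_recu} to obtain a simple recursion for the full hybrid error at the end of each sub-interval, and then unroll the recursion starting from the zero initial condition.

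First I would use the triangle inequality $\|e_m^N\|_{L^2(X\times\sphe)}(t) \leq \|e_{\mathrm{u},m}^N\|_{L^2(X\times\sphe)}(t) + \|e_{\mathrm{c},m}^N\|_{L^2(X\times\sphe)}(t)$ together with \eqref{eq:unc_err_recu} and \eqref{eq:col_err_recu}. The critical algebraic observation is that the exponential factor $e^{-\sigma(t-t_{m-1})/\e^2}$ from the uncollided bound and the factor $1 - e^{-\sigma(t-t_{m-1})/\e^2}$ from the collided bound sum exactly to one. This collapses the dependence on the prior step's error to a unit coefficient, producing the clean one-step estimate
\begin{equation*}
\|e_m^N\|_{L^2(X\times\sphe)}(t) \leq \|e_{m-1}^N\|_{L^2(X\times\sphe)}(t_{m-1}^-) + \|\eta_{\mathrm{c},m}^N\|_{L^2(X\times\sphe)}(t) + \frac{1}{\e}\|\Omega\cdot\nabla_x \eta_{\mathrm{c},m}^N\|_{L^1([t_{m-1},t];L^2(X\times\sphe))}.
\end{equation*}

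Next I would take the limit $t \to t_m^-$ on both sides, which is justified by the continuity of the components on $[t_{m-1},t_m)$ and the monotonicity of the $L^1$ norm in the upper limit. This yields the recursion
\begin{equation*}
\|e_m^N\|_{L^2(X\times\sphe)}(t_m^-) \leq \|e_{m-1}^N\|_{L^2(X\times\sphe)}(t_{m-1}^-) + \|\eta_{\mathrm{c},m}^N\|_{L^2(X\times\sphe)}(t_m^-) + \frac{1}{\e}\|\Omega\cdot\nabla_x \eta_{\mathrm{c},m}^N\|_{L^1([t_{m-1},t_m];L^2(X\times\sphe))},
\end{equation*}
for each $m \in \{1,\dots,M\}$. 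Finally, iterating this recursion from $m=M$ down to $m=1$ and using the initial value $\|e_0^N\|(t_0^-)=0$ (since the very first uncollided equation is initialized to the exact datum $g$ and no collided component has been generated yet) telescopes the prior-step contributions and produces exactly the stated sum over $m=1,\dots,M$.

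I do not anticipate any major obstacle: the lemma does all the heavy lifting. The only subtlety is making sure the cancellation of the two exponential factors is clean, i.e.\ that the prefactor on $\|e_{m-1}^N\|(t_{m-1}^-)$ is exactly one (and not a constant slightly larger), since any constant strictly greater than one would compound geometrically over $M$ steps and destroy the linear-in-$M$ bound. Because \eqref{eq:unc_err_recu}--\eqref{eq:col_err_recu} are designed so that the two terms add up to $1$ with no slack, this will be straightforward.
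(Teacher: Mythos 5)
Your proposal is correct and follows essentially the same route as the paper: add the two bounds of Lemma~\ref{lem:unc_err_recu} (using that the prefactors $e^{-\sigma(t-t_{m-1})/\e^2}$ and $1-e^{-\sigma(t-t_{m-1})/\e^2}$ sum exactly to one), pass to the limit $t\to t_m^-$, and unroll the resulting recursion down to $e_0^N(t_0^-)=0$. Your explicit remark about why the unit prefactor matters (to avoid geometric compounding over $M$ steps) is a correct observation that the paper leaves implicit.
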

\begin{proof}
Adding the inequalities in Lemma \ref{lem:unc_err_recu} 
and taking the limit $t \to t_{m+1}^-$ gives
\begin{equation}
    \begin{split}
 \norma{e_{M}^{N}}_{L^2(X\times\sphe)}(t^-_{M})
 \leq \norma{e_{{M-1}}^{N}}_{L^2(X\times\sphe)}(t^-_{M-1})
 &+\norma{\eta_{\mathrm{c},{M}}^{N}}_{L^2(X\times\sphe)}(t^-_{M})\\
&+\frac{1}{\e}\norma{\Omega \cdot\nabla_x\eta_{\mathrm{c},{M}}^{N}}_{L^1([t_{M-1},t_{M}];L^{2}(X\times\sphe))}
    \end{split}
    \end{equation}
Exhausting this recursion until $e_{0}^{N}(t^-_{0}) = 0$ yields the result. 
\end{proof}

\begin{lem}\label{lem:hybrid_consistency _estimate} Let $s\geq 0$, $N \geq\max\{0,s-1\}$, and $1\leq m\leq M$.  Then the $m$-th projection error $\eta^N_{\mathrm{c},m}$ satisfies,
\begin{equation}
\norma{\eta_{\mathrm{c},m}^N}_{L^2(X\times\sphe)}(t^-_{m})\leq \frac{\Delta t}{\e(N+1)^s}\sup_{\tau\in[t_{m-1},t_{m}]}\left|\psi_{\mathrm{c},m}\right|_{H^{1,s}}(\tau).
\end{equation}
\end{lem}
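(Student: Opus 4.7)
The plan is to mirror the proof of Lemma \ref{lem:pn_proj_error_est}, exploiting two crucial simplifications specific to the collided equation: its initial data is zero and its source term is isotropic. More precisely, I would begin by noting that since $\psi_{\mathrm{c},m}|_{t=t_{m-1}} = 0$, we have $\eta^N_{\mathrm{c},m}(t_{m-1}) = \widetilde{\cP}_N(0) = 0$. Moreover, the right-hand side of the collided equation \eqref{eq:hybrid_intervals_c} is $\frac{\sigma}{\e}(\overline{\psi_{\mathrm{u},m}}+\overline{\psi_{\mathrm{c},m}})$, which is isotropic in $\Omega$ and therefore lies in $\mathbb{P}_0(\bbS^2) \subset \mathbb{P}_N(\bbS^2)$ for every $N \geq 0$; consequently it is annihilated by $\widetilde{\cP}_N$.

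I would then apply the argument of Lemma \ref{lem:proj_error_control} verbatim to the collided equation (which has the same form as \eqref{eq:transp_simple} with effective source $q = \frac{\sigma}{\e^2}\overline{\psi_{\mathrm{u},m}}$ and starting time $t_{m-1}$). The two vanishing contributions described above eliminate the initial-data term and the source term in \eqref{eq:great_estimate}, leaving only the streaming-consistency term:
\begin{equation}
\norma{\eta^N_{\mathrm{c},m}}_{L^2(X\times\sphe)}(t) \leq \cA_{t_{m-1}}\!\left[\norma{\widetilde{\cP}_N(\Omega\cdot\nabla_x\psi_{\mathrm{c},m})}_{L^2(X\times\sphe)}\right](t).
\end{equation}

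Next, I would use the approximation property in Lemma \ref{lem:approx} to convert the projection residual into an $H^{0,s}$ semi-norm of $\Omega\cdot\nabla_x\psi_{\mathrm{c},m}$, and then use the definition of the $H^{r,s}$ semi-norms to dominate this by $(N+1)^{-s}|\psi_{\mathrm{c},m}|_{H^{1,s}}$. Finally, pulling the supremum out of the $\cA_{t_{m-1}}$ operator and applying the crude bound $\cA_{t_{m-1}}[\mathbbm{1}](t) \leq (t-t_{m-1})/\e$ (cf. \eqref{eq:A_one} with $k=1$) at $t = t_m^-$ gives the claimed factor $\Delta t/\e$.

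There is no real obstacle here; the proof is essentially a specialization of Lemma \ref{lem:pn_proj_error_est} where two of the three source contributions vanish by construction. The only point requiring a brief justification is why $\widetilde{\cP}_N$ annihilates both $\overline{\psi_{\mathrm{u},m}}$ and $\overline{\psi_{\mathrm{c},m}}$, which follows immediately from the fact that angular averages are constant in $\Omega$ and hence already contained in the range of $\cP_N$ for every $N \geq 0$.
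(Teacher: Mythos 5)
Your proposal is correct and follows essentially the same route as the paper: apply Lemma \ref{lem:proj_error_control} on $[t_{m-1},t_m]$ with zero initial consistency error and isotropic source $q=\frac{\sigma}{\e^2}\overline{\psi_{\mathrm{u},m}}$ (so that $\widetilde{\cP}_N q=0$), then use the bound $\cA_{t_{m-1}}[\mathbbm{1}](t)\leq (t-t_{m-1})/\e$ and the spectral estimate of Lemma \ref{lem:approx}. The only cosmetic difference is that you treat the full right-hand side as the source, whereas the paper absorbs $\frac{\sigma}{\e}\overline{\psi_{\mathrm{c},m}}$ into the scattering term of \eqref{eq:transp_simple}; both terms are annihilated by $\widetilde{\cP}_N$ either way, so the argument is unaffected.
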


\begin{proof}
    An application of Lemma \ref{lem:proj_error_control} with $\alpha=t_{m-1}$, $\beta=t_{m}$, $\eta_{\mathrm{c},m}^N(t_{m-1})=0$, an isotropic source $q=\frac{\sigma}{\e^2}\overline{\psium[m]}$, along with the fact that $\cA_{\alpha}[f](t) \leq \e^{-1} (t -\alpha) \sup_{\tau\in[\alpha,t]} f(\tau)$
    (cf. \eqref{eq:A_one} with $k=1$)) and the spectral estimate in Lemma \ref{lem:approx}, gives
    \begin{equation}
    \begin{split}
    &\norma{\eta_{\mathrm{c},m}^N}_{L^2(X \times \bbS^2)}(t_{m}^-) 
    \leq \frac{\dt}{\e}\sup_{\tau \in [t_{m-1},t_m]}\norma{\tP_N(\Omega\cdot\nabla_{x}\psicm[m]))}_{L^2(X \times \bbS^2)}(\tau)\,d\tau\\
    & \quad \leq \frac{\dt}{\e(N+1)^s}\sup_{\tau \in [t_{m-1},t_m]}
    \left|\Omega\cdot\nabla_x\psi_{\mathrm{c},m}\right|_{H^{0,s}}(\tau)\,d\tau
     \leq \frac{\Delta t}{\e(N+1)^s}\sup_{\tau \in [t_{m-1},t_m]}\left|\psi_{\mathrm{c},m}\right|_{H^{1,s}}(\tau).
     \end{split}
    \end{equation}
    \end{proof}
 \subsection{Estimating hybrid error in terms of the data}

Finally, we will apply the approximation properties and stability estimates to the estimate in Theorem \ref{thm:hybrid_error_general} to obtain an estimate that depends only on the regularity of the data.

\begin{thm}\label{thm:hyb_final_err_est}
Let $s\geq 1$ and $N \geq s-1$.  If $q \in L^1([0,T];H^{s+1,0})$ and $g \in H^{s+1,0}$, then
\begin{gather}\label{eqn:hyb_final_err_est}
           \norma{e_{M}^{N}}_{L^2(X\times\sphe)}(T^-)\leq \frac{2C_s}{(N+1)^s} \left( \left|g\right|_{H^{s+1,0}} +  T\left|q\right|_{L^{\infty}([0,T];H^{s+1,0})}\right)\min\left(\frac{\e^{s-1}s! T}{\sigma^s}, \frac{\Delta t^{s}T}{\e^{s+1}}\min\left(1,\frac{\Delta t \sigma}{\e^2}\right)\right).
\end{gather}
\end{thm}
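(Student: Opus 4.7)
The strategy is to telescope the error via Theorem \ref{thm:hybrid_error_general}, reduce each summand to a bound on $\sup_\tau |\psi_{\mathrm{c},m}|_{H^{1,s}}(\tau)$, and then estimate that supremum in terms of the data by composing Corollary \ref{cor:isotropic_data_est_con} (applied to the collided equation on $[t_{m-1},t_m]$) with the uncollided $L^1$-stability estimate of Lemma \ref{lem:one_interval_stab}. The minimum structure in \eqref{eqn:hyb_final_err_est} then emerges from the two available bounds on $\cA^k_{t_{m-1}}[\mathbbm{1}]$ in Lemma \ref{lem:A_estimates}, after summing over the $M=T/\Delta t$ intervals.

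The first step is to bound each summand in Theorem \ref{thm:hybrid_error_general} by $\tfrac{\Delta t}{\e(N+1)^s}\sup_{\tau\in[t_{m-1},t_m]}|\psi_{\mathrm{c},m}|_{H^{1,s}}(\tau)$. For the pointwise consistency term this is Lemma \ref{lem:hybrid_consistency _estimate}. For the $L^1$ derivative term, Lemma \ref{lem:approx} combined with $|\Omega\cdot\nabla_x u|_{H^{0,s}}\leq |u|_{H^{1,s}}$ gives $\|\Omega\cdot\nabla_x\eta_{\mathrm{c},m}^N\|_{L^2}(\tau)\leq(N+1)^{-s}|\psi_{\mathrm{c},m}|_{H^{1,s}}(\tau)$ pointwise in $\tau$, and a trivial time integration over $[t_{m-1},t_m]$ produces the same bound; adding the two accounts for the factor $2$ in the prefactor $2C_s$.

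Next, on $[t_{m-1},t_m]$ the collided component $\psi_{\mathrm{c},m}$ solves \eqref{eq:hybrid_intervals_c} with zero initial data and isotropic source $\tfrac{\sigma}{\e^2}\overline{\psi_{\mathrm{u},m}}$, so a time-translated version of Corollary \ref{cor:isotropic_data_est_con} (valid because the problem is autonomous) yields
\[
|\psi_{\mathrm{c},m}|_{H^{1,s}}(t)\leq C_s s!\,\cA^s_{t_{m-1}}\!\Big[\tfrac{\sigma}{\e^2}|\overline{\psi_{\mathrm{u},m}}|_{L^1([t_{m-1},\bullet];H^{s+1,0})}\Big](t).
\]
The Cauchy--Schwarz inequality gives $|\overline{\psi_{\mathrm{u},m}}|_{H^{s+1,0}}\leq|\psi_{\mathrm{u},m}|_{H^{s+1,0}}$, and estimate \eqref{eqn:Linf_bd} of Lemma \ref{lem:one_interval_stab} yields $|\psi_{\mathrm{u},m}|_{L^1([t_{m-1},\tau];H^{s+1,0})}\leq \e D\,\cA_{t_{m-1}}[\mathbbm{1}](\tau)$, where $D:=|g|_{H^{s+1,0}}+T|q|_{L^\infty([0,T];H^{s+1,0})}$. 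Substituting and invoking the monotonicity of $\cA_{t_{m-1}}$ produces
\[
\sup_{\tau\in[t_{m-1},t_m]}|\psi_{\mathrm{c},m}|_{H^{1,s}}(\tau)\leq C_s s!\,\tfrac{\sigma D}{\e}\,\cA^{s+1}_{t_{m-1}}[\mathbbm{1}](t_m).
\]

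Finally, Lemma \ref{lem:A_estimates} offers two bounds ($\e/\sigma$ and $\Delta t/\e$) on each single application of $\cA_{t_{m-1}}$; writing $\cA^{s+1}=\cA\circ\cA^s$ and making both choices at the inner and outer stages gives four candidate bounds for $\cA^{s+1}_{t_{m-1}}[\mathbbm{1}](t_m)$. Substituting these into the previous display, multiplying by the Step~1 prefactor, and using $M\Delta t=T$, three of the four combinations reduce respectively to $\Theta_1=\e^{s-1}s!T/\sigma^s$, $\Theta_2=\Delta t^s T/\e^{s+1}$, and $\Theta_3=\Delta t^{s+1}T\sigma/\e^{s+3}$, while the fourth is always dominated. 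A short case analysis on the sign of $\Delta t\sigma-\e^2$ shows that $\min(\Theta_1,\Theta_2,\Theta_3)=\min\!\big(\Theta_1,\Theta_2\min(1,\Delta t\sigma/\e^2)\big)$, which is the announced bound. The main obstacle I anticipate is not conceptual but careful bookkeeping: tracking factorials (to absorb $s!$ and $(s+1)!$ into $C_s$) and verifying that the four combinations collapse cleanly into the stated three-term minimum.
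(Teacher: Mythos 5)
Your proposal is correct and follows essentially the same route as the paper: telescoping via Theorem \ref{thm:hybrid_error_general}, reduction of each summand to $\tfrac{2\Delta t}{\e(N+1)^s}\sup_\tau|\psi_{\mathrm{c},m}|_{H^{1,s}}$, Corollary \ref{cor:isotropic_data_est_con} with the isotropic source $\tfrac{\sigma}{\e^2}\overline{\psi_{\mathrm{u},m}}$, and the uncollided a priori bounds of Lemma \ref{lem:one_interval_stab}. The only (harmless) deviation is in the last step: the paper runs two parallel estimates, one through \eqref{eqn:L1_bd_tmp1} and one through \eqref{eqn:Linf_bd}, and takes the minimum of the two resulting bounds, whereas you use only \eqref{eqn:Linf_bd} and recover the same three-term minimum by mixing the $\e/\sigma$ and $\Delta t/\e$ bounds across the stages of $\cA^{s+1}_{t_{m-1}}[\mathbbm{1}]$ via monotonicity --- a legitimate bookkeeping variant that reproduces the stated constants.
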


\begin{proof}
Using $\|\Omega \cdot \nabla_x \psicm[m]\|_{L^2(X\times\sphe)} \leq \|\nabla_x \psicm[m]\|_{L^2(X\times\sphe)}$, and applying Lemma \ref{lem:approx} and Lemma \ref{lem:hybrid_consistency _estimate} to Theorem \ref{thm:hybrid_error_general} yields, for $N \geq s-1$,
\begin{align*}
   \norma{e_{M}^{N}}_{L^2(X\times\sphe)}(t^{-}_{M})
   &\leq \frac{1}{(N+1)^s}\sum_{m=1}^{M}
   \left(\frac{\Delta t}{\e}\sup_{\tau\in[t_{m-1},t_m]}\left|\psi_{\mathrm{c},m}\right|_{H^{1,s}}(\tau)+\frac{1}{\e}|\psicm[m]|_{L^1([t_{m-1},t_m];H^{1,s})} \right)
       \\
    &\leq \frac{2\Delta t}{\e (N+1)^s} \sum_{m=1}^{M}\sup_{\tau\in [t_{m-1},t_m]}|\psicm[m]|_{H^{1,s}}(\tau)
\end{align*}
Applying Corollary \ref{cor:isotropic_data_est_con} with $q = \frac{\sigma}{\e^2}\overline{\psium[m]}$ and $r = 1$,  
\begin{equation}\label{eqn:hyb_error_before_est}
           \norma{e_{M}^{N}(t^-_{M})}_{L^2(X\times\sphe)}\leq 2C_ss!\frac{\sigma}{\e^3}\frac{\Delta t}{(N+1)^s} \sum_{m=1}^{M} \sup_{\tau\in [t_{m-1},t_m]}\cA_{t_{m-1}}^s\left[|\psium[m]|_{L^1([t_{m-1},\bullet];H^{s+1,0})}\right](\tau).
\end{equation}
For the summand above, it follows from \eqref{eqn:L1_bd_tmp1}, the monotonicity of $\cA_{\alpha}$, and Lemma \ref{lem:A_estimates} that
\begin{equation}
\begin{split}
   \sup_{\tau\in [t_{m-1},t_m]} \cA_{t_{m-1}}^s\left[|\psium[m]|_{L^1([t_{m-1},\bullet];H^{s+1,0})}\right](\tau) &\leq \sup_{\tau\in [t_{m-1},t_m]}|\psium[m]|_{L^1([t_{m-1},t_m];H^{s+1,0})}\cA_{t_{m-1}}^s\left[\mathbbm{1}\right](\tau)
    \\
    & \hspace{-50pt}\leq \frac{\e^2}{\sigma}\left( \left|g\right|_{H^{s+1,0}} +  \left|q\right|_{L^1([0,T];H^{s+1,0})}\right)\cA_{t_{m-1}}^s\left[\mathbbm{1}\right](t_{m})
    \\
    &\hspace{-50pt} \leq 
    \frac{\e^2}{\sigma}\left( \left|g\right|_{H^{s+1,0}} +  \left|q\right|_{L^1([0,T];H^{s+1,0})}\right)\min\left(\frac{\e^s}{\sigma^s}, \frac{1}{s!}\left(\frac{\Delta t}{\e}\right)^s\right).
\end{split}           
\end{equation}
Pluggin the above bound into \eqref{eqn:hyb_error_before_est} yields (since $T = M \Delta t$)
\begin{equation}\label{eqn:hyb_est_one}
\begin{split}
           \norma{e_{M}^{N}}_{L^2(X\times\sphe)}(t^-_{M})
           &\leq 2C_s s!\frac{\Delta t M }{\e(N+1)^s} \left( \left|g\right|_{H^{s+1,0}} +  \left|q\right|_{L^1([0,T];H^{s+1,0})}\right)
           \min\left(\frac{\e^s}{\sigma^s}, \frac{\Delta t^{s}}{s! \e^s}\right)\\
           &= \frac{2C_s}{(N+1)^s} \left( \left|g\right|_{H^{s+1,0}} +  \left|q\right|_{L^1([0,T];H^{s+1,0})}\right)\min\left(\frac{\e^{s-1}s!T}{\sigma^s}, \frac{\Delta t^{s}T}{\e^{s+1}}\right).
\end{split}           
\end{equation}
On the other hand, it follows from \eqref{eqn:Linf_bd} that
\begin{equation}
\begin{split}
   &\sup_{\tau\in [t_{m-1},t_m]} \cA_{t_{m-1}}^s\left[|\psium[m]|_{L^1([t_{m-1},\bullet];H^{s+1,0})}\right](\tau) \\
   & \qquad \qquad \leq \e(\left|g\right|_{H^{s+1,0}}  + T\left|q\right|_{L^{\infty}([0,T];H^{s+1,0})})\sup_{\tau\in [t_{m-1},t_m]}\cA_{t_{m-1}}^{s+1}[\mathbbm{1}](\tau)
   \\
   & \qquad \qquad  \leq \e(\left|g\right|_{H^{s+1,0}}  + T\left|q\right|_{L^{\infty}([0,T];H^{s+1,0})})\min\left(\frac{\e^{s+1}}{\sigma^{s+1}}, \frac{1}{(s+1)!}\left(\frac{\Delta t}{\e}\right)^{s+1}\right).
\end{split}           
\end{equation}
Plugging this into \eqref{eqn:hyb_error_before_est} yields,
\begin{gather}\label{eqn:hyb_est_two}
           \norma{e_{M}^{N}}_{L^2(X\times\sphe)}(t^-_{M})\leq \frac{2C_s}{(N+1)^s} \left(\left|g\right|_{H^{s+1,0}}  + T\left|q\right|_{L^{\infty}([0,T);H^{s+1,0})}\right)\min\left(\frac{\e^{s-1}s!T}{\sigma^s}, \frac{\Delta t^{s+1}\sigma T}{\e^{s+3}}\right).
\end{gather}
Taking a minimum of the right hand sides of \eqref{eqn:hyb_est_one} and \eqref{eqn:hyb_est_two} yields the result.
\end{proof}
\section{Return to the original transport model}
In this section we will show error estimates for the model \eqref{eq:transp}. The analogous discretizations for the models are the following. For the non-splitting $\Peqn$ discretization we seek \mbox{$\Psi^{\e,N}\in C([0,T);X \times \bbP_{N}(\bbS^2))$}, satisfying 
\begin{subequations}
\begin{align}\label{eqn:pn_approx_original}
&    \e\partial_t\Psi^{\e,N}+\cP_N (\Omega\cdot\nabla_x\Psi^{\e,N})+\frac{\sigt}{\e}\Psi^{\e,N}=\left(\frac{\sigt}{\e}-\e\siga\right)\overline{\Psi^{\e,N}}+\e\cP_N Q,\\
& \Psi^{\e,N}|_{t=0} = \cP_N g
 \end{align}
  \end{subequations}
and for the hybrid, we seek $\Psi_m^{\e,N} = \Psi_{\mathrm{u},m}^{\e,N} + \Psi_{\mathrm{c},m}^{\e,N}$ where for each $m \in \{1,2,\dots,M\}$
\begin{equation}
    \left(\Psi_{\mathrm{u},m}^{\e,N}, \Psi_{\mathrm{c},m}^{\e,N} \right) \in C([t_{m-1},t_{m}); X \times L^2(\bbS^2)) \times C([t_{m-1},t_{m});X \times \bbP_{N}(\bbS^2)) 
\end{equation} 
satifies
\begin{subequations}
  \begin{align}    &\e\partial_t\Psi_{{\mathrm{u}},m}^{\e,N}+\Omega\cdot\nabla_x\Psi_{{\mathrm{u}},m}^{\e,N}+\frac{\sigt}{\e}\Psi_{{\mathrm{u}},m}^{\e,N}=\e Q,
      \label{eq:hybrid_intervals_PN_uncollided_transport}\\
    &\e\partial_t\Psi_{\mathrm{c},m}^{\e,N}+\cP_N\left(\Omega\cdot\nabla_x\Psi_{\mathrm{c},m}^{\e,N}\right)+\frac{\sigt}{\e}\Psi_{\mathrm{c},m}^{\e,N}=\left(\frac{\sigt}{\e}-\e\siga\right)\overline{\Psi_{\mathrm{u},m}^{\e,N}}+\overline{\Psi_{\mathrm{c},m}^{\e,N}},
     \label{eq:hybrid_intervals_PN_collided_transport}\\\\
    &\left.\Psi_{\mathrm{c},m}^{\e,N}\right|_{t=t_{m-1}}=0, \quad \left.\Psi_{\mathrm{u},m}^{\e,N}\right|_{t=t_{m-1}}=
   \begin{cases}
   g, &\quad m = 1,\\
   \Psi_{\mathrm{u},m-1}^{\e,N}(t^-_{m-1}) + \Psi_{\mathrm{c},m-1}^{\e,N}(t^-_{m-1}) &\quad m > 1.
   \end{cases}
    \label{eq:hybrid_intervals_PN_remap_transport}
    \end{align}
        \label{eq:hybrid_intervals_PN_transport}%
    \end{subequations}
We define the correspondent $\Peqn$ error and $m$-th hybrid error respectively as, 
\begin{equation*}
    e^{\e,N}=\Psi^{\e}-\Psi^{\e,N}\quand e^{\e,N}_m=\Psi_m^{\e}-\Psi^{\e,N}_m 
\end{equation*}
Since $\psi^N=e^{\siga t}\Psi^{\e,N}$ and $\psi_m^{N}=e^{\siga t}\Psi_m^{\e,N}$, applying Theorems \ref{thm:PN_error_est} and \ref{thm:hyb_final_err_est} gives the following estimate for the $\Peqn$ error for the original transport model \eqref{eq:transp}.
\begin{thm} \label{thm:PN_error_est_abs}
Let $s\geq 1$ and $N\geq s-1$, $Q \in L^\infty([0,T];H^{i,j})$ and $g \in H^{i,j}$ for each $i,j$ such that $0 \leq j \leq s$, $i+ j \leq s+1$. Then
\begin{equation}\label{eq:pn_error_est_abs} 
\begin{split}
  \norma{e^{\e,N}}_{L^2(X\times\sphe)}(T)&\leq \frac{e^{-(\sigt+\e^2\siga)T/\e^2}}{(N+1)^s}|g|_{H^{0,s}}+\frac{1}{(N+1)^s}|Q|_{L^{\infty}([0,T];H^{0,s})}\min\left(\frac{\e^2}{\sigt},T\right)\\
  +2C_s \frac{1}{(N+1)^s}&\Bigg( \left[e^{-\siga T}|g|_{H^{s+1,0}}+T|Q|_{L^\infty([0,T];H^{s+1,0})} \right]\min\left(\frac{\e^{s-1} s!T}{\sigt^s},\left(\frac{T}{\e}\right)^{s+1}\right)
        \\
    &\quad+e^{-(\sigt+\e^2\siga)T}\sum_{i=0}^{s-1}|g|_{H^{1+i,s-i}}\binom{s}{i}\frac{T^{i+1}}{\e^{i+1}}
    \\
       & +\sum_{i=0}^{s-1}|Q|_{L^{\infty}([0,T];H^{1+i,s-i})}\binom{s}{i}\min\left(\frac{\e^{i+1}T}{\sigt^{i+1}},\frac{1}{(i+1)!}\frac{T^{i+2}}{\e^{i+1}}\right)\Bigg)
\end{split}
\end{equation}
\end{thm}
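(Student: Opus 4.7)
The plan is to derive the estimate by reducing to the purely scattering case already treated in Theorem \ref{thm:PN_error_est}, via the exponential rescaling introduced in Section \ref{sec:background}. The relation $\psi = e^{\sigma_a t}\Psi^{\e}$ was established for the continuous equation, and the same rescaling at the discrete level gives $\psi^N = e^{\sigma_a t}\Psi^{\e,N}$ as noted in the paragraph preceding the theorem. The key consequence is the identity
\begin{equation*}
  e^{\e,N}(t) \;=\; \Psi^{\e}(t)-\Psi^{\e,N}(t) \;=\; e^{-\sigma_a t}\bigl(\psi(t)-\psi^N(t)\bigr) \;=\; e^{-\sigma_a t}\,e^{N}(t),
\end{equation*}
so that $\norma{e^{\e,N}}_{L^2(X\times\sphe)}(T)=e^{-\sigma_a T}\norma{e^{N}}_{L^2(X\times\sphe)}(T)$. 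Before applying Theorem \ref{thm:PN_error_est}, I would just double-check that the transformation commutes with the $\cP_N$ projection, which is immediate because $\cP_N$ acts only in $\Omega$ while $e^{\sigma_a t}$ depends only on $t$; in particular $\cP_N q = e^{\sigma_a t}\cP_N Q$ and the initial datum $g$ is unchanged.

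Next, I would apply Theorem \ref{thm:PN_error_est} with $\sigma=\sigma_t$ and source $q = e^{\sigma_a t}Q$ to obtain the stated bound for $\norma{e^{N}}_{L^2(X\times\sphe)}(T)$, then multiply the inequality through by $e^{-\sigma_a T}$. The bookkeeping is routine: every source-driven term of the form $|q|_{L^{\infty}([0,T];H^{r,s})}$ satisfies
\begin{equation*}
  |q|_{L^\infty([0,T];H^{r,s})}=\sup_{\tau\in[0,T]}e^{\sigma_a \tau}|Q(\tau)|_{H^{r,s}} \;\le\; e^{\sigma_a T}\,|Q|_{L^{\infty}([0,T];H^{r,s})},
\end{equation*}
so that the prefactor $e^{-\sigma_a T}$ cancels the $e^{\sigma_a T}$ gained from $q$ and leaves $|Q|_{L^{\infty}(\cdots)}$ terms. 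For any term involving only the initial datum $g$, the prefactor $e^{-\sigma_a T}$ survives intact; in particular the pure $|g|_{H^{s+1,0}}$ term in the inner bracket of \eqref{eq:pn_error_est} picks up a factor $e^{-\sigma_a T}$, which matches the stated estimate. Wherever the original Theorem \ref{thm:PN_error_est} produces an exponential decay $e^{-\sigma T/\e^2}=e^{-\sigma_t T/\e^2}$ (that is, in the two terms with $|g|$ at angular regularity $s$ and $s-i$), the product with the prefactor yields the combined decay
\begin{equation*}
   e^{-\sigma_a T}\cdot e^{-\sigma_t T/\e^2} \;=\; e^{-(\sigma_t+\e^2\sigma_a)T/\e^2},
\end{equation*}
which is exactly the exponent that appears in \eqref{eq:pn_error_est_abs}.

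There is no substantive obstacle: the proof is purely a change-of-variables corollary, and the only thing that needs care is the matching of the exponential factors so that the source-driven terms appear with $|Q|$ (no exponential) while the initial-condition terms retain $e^{-\sigma_a T}$ or $e^{-(\sigt+\e^2\sigma_a)T/\e^2}$. Once the translation is carried out, one simply reads off each of the five summands on the right-hand side of \eqref{eq:pn_error_est_abs} from the corresponding summand in \eqref{eq:pn_error_est}, completing the proof. An analogous argument, based on $\Psi_m^{\e,N}=e^{-\sigma_a t}\psi_m^{N}$, would yield the corresponding hybrid estimate from Theorem \ref{thm:hyb_final_err_est}.
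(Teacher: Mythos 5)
Your proposal is correct and is precisely the argument the paper intends: the paper's entire "proof" is the one-line remark that $\psi^N=e^{\siga t}\Psi^{\e,N}$, so the estimate follows by applying Theorem \ref{thm:PN_error_est} with $q=e^{\siga t}Q$ and multiplying through by $e^{-\siga T}$, exactly as you do. Your explicit tracking of which terms absorb the factor $e^{-\siga T}$ (source terms, via $|q|_{L^\infty}\le e^{\siga T}|Q|_{L^\infty}$) versus which retain it (initial-data terms) is more careful than the source, and in fact exposes that the combined exponent on the $\sum_i|g|_{H^{1+i,s-i}}$ term should read $e^{-(\sigt+\e^2\siga)T/\e^2}$ rather than the $e^{-(\sigt+\e^2\siga)T}$ printed in the statement.
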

Meanwhile for the hybrid approximation.
\begin{thm}\label{thm:hyb_final_err_est_abs}
Let $s\geq 1$ and $N \geq s-1$, $Q \in L^1([0,T];H^{s+1,0})$ and $g \in H^{s+1,0}$, we have 
\begin{multline}
\label{eqn:hyb_final_err_est_abs}
           \norma{e_{M}^{\e,N}}_{L^2(X\times\sphe)}(T^-)\leq \frac{2C_s}{(N+1)^s} \left( e^{-\siga T}\left|g\right|_{H^{s+1,0}} +  T\left|Q\right|_{L^{\infty}([0,T];H^{s+1,0})}\right) \\
           \times \min\left(\frac{\e^{s-1}s! T}{\sigt^s}, \frac{\Delta t^{s}T}{\e^{s+1}}\min\left(1,\frac{\Delta t \sigt}{\e^2}\right)\right).
\end{multline}
\end{thm}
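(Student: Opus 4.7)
The plan is to reduce Theorem \ref{thm:hyb_final_err_est_abs} directly to the already-proved Theorem \ref{thm:hyb_final_err_est} via the exponential change of variables $\psi = e^{\siga t}\Psi^{\veps}$ introduced after \eqref{eq:transp_simple}. As the paper itself suggests in the paragraph preceding the theorem statement, everything is engineered so that this reduction is essentially cosmetic; the work is already done in Section 4.

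First, I would verify that applying the same change of variables at the discrete level---setting $\psi_{\mathrm{u},m}^N(t) = e^{\siga t}\Psi_{\mathrm{u},m}^{\e,N}(t)$ and $\psi_{\mathrm{c},m}^N(t) = e^{\siga t}\Psi_{\mathrm{c},m}^{\e,N}(t)$---maps the hybrid system \eqref{eq:hybrid_intervals_PN_transport} onto the purely-scattering hybrid \eqref{eq:hybrid_intervals_PN} with source $q = e^{\siga t}Q$ and initial data $g$. For the uncollided equation \eqref{eq:hybrid_intervals_PN_uncollided_transport}, this is exactly the manipulation already used to pass from \eqref{eq:transp} to \eqref{eq:transp_simple}. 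For the collided equation \eqref{eq:hybrid_intervals_PN_collided_transport}, the linearity of $\cP_N$ and $\nabla_x$ ensures that the scalar factor $e^{\siga t}$ commutes through the $\Peqn$ operators. The relabeling step \eqref{eq:hybrid_intervals_PN_remap_transport} also transforms correctly since a single factor $e^{\siga t_{m-1}}$ is applied uniformly to both $\Psi_{\mathrm{u},m-1}^{\e,N}(t_{m-1}^-)$ and $\Psi_{\mathrm{c},m-1}^{\e,N}(t_{m-1}^-)$. The identical calculation carries the exact semi-group solutions $\Psi_m^{\e}$ of the splitting onto the exact solutions $\psi_m$ of \eqref{eq:hybrid_intervals}, so that
\begin{equation*}
e_M^N(t) \;=\; e^{\siga t}\, e_M^{\e,N}(t),
\qquad
\text{and in particular}
\qquad
\norma{e_M^{\e,N}}_{L^2(X\times\sphe)}(T^-) \;=\; e^{-\siga T}\,\norma{e_M^N}_{L^2(X\times\sphe)}(T^-).
\end{equation*}

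Second, I would apply Theorem \ref{thm:hyb_final_err_est} to the transformed system (with $\sigma = \sigt$ and source $q = e^{\siga t}Q$) and then translate the resulting bound back. The only nontrivial book-keeping item is the source semi-norm, which satisfies
\begin{equation*}
\left|q\right|_{L^{\infty}([0,T];H^{s+1,0})} \;=\; \sup_{t\in[0,T]}\, e^{\siga t}\,\left|Q(\cdot,\cdot,t)\right|_{H^{s+1,0}} \;\leq\; e^{\siga T}\left|Q\right|_{L^{\infty}([0,T];H^{s+1,0})}.
\end{equation*}
Multiplying the estimate from Theorem \ref{thm:hyb_final_err_est} by $e^{-\siga T}$ and distributing the factor through the parenthesis then yields exactly \eqref{eqn:hyb_final_err_est_abs}: the $e^{-\siga T}$ survives on the initial-data contribution $|g|_{H^{s+1,0}}$, while the two exponentials cancel on the source contribution $T|Q|_{L^{\infty}([0,T];H^{s+1,0})}$. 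The $\min(\cdot,\cdot)$ factor is untouched because it depends only on $\sigt$, $\e$, $T$, and $\dt$, which are unaffected by the transformation.

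There is no genuine obstacle: once the transformation is set up correctly, all technical work---spectral convergence of $\cP_N$ (Lemma \ref{lem:approx}), stability of the collided $\Peqn$ system (Lemma \ref{lem:stability_higher_order}), the uncollided $L^1$ bounds (Lemma \ref{lem:one_interval_stab}), and the one-step error recurrence (Lemma \ref{lem:unc_err_recu})---has already been executed in the purely-scattering setting and enters through Theorem \ref{thm:hyb_final_err_est}. The only thing that could go wrong is inconsistency between the continuous transformation and the relabeling at $t = t_{m-1}$, which is ruled out by the fact that both sub-problems are multiplied by the same continuous factor $e^{\siga t}$.
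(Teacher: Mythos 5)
Your proposal is correct and follows exactly the route the paper takes: the paper's own justification is the single remark that $\psi_m^{N}=e^{\siga t}\Psi_m^{\e,N}$ and then an application of Theorem \ref{thm:hyb_final_err_est}, and you simply spell out the details of that reduction (transforming the discrete split system, tracking the factor $e^{\siga t}$ through the relabeling, and bookkeeping the source semi-norm). No difference in substance.
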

\section{Conclusion}
In this paper, we have derived multiscale error estimates for the $\Peqn$ approximation of the RTE and for a hybrid approximation for the RTE that is built using the $\Peqn$ approximation. By construction, the hybrid is is more expensive; we use these error estimates to understand the benefits of the additional expense for different parameter regimes.  At each time step in the hybrid approximation, the collided equation is equipped with isotropic initial conditions and zero initial condition.  In scattering dominating regimes, this property is key to improved estimates over the monolithic $\Peqn$ approach.  Meanwhile, in purely absorbing regimes, the hybrid captures the RTE solution exactly.

In the future work, we intend to revisit the current analysis for more general problems on non-periodic domains, with non-constant cross-sections and inflow boundary conditions.  In addition, we intend to explicitly examine the effects of angular discretization errors in the treatment of the uncollided equation, which for the purposes of the current paper was assumed to be solved exactly. 
\appendix
 \section{Other estimates}
\begin{lem}\label{lem:basic_ode_result} Assume $\chi$ is a non-negative continuous function on $[\alpha,\beta]$.  Assume $\phi\in C^1([\alpha,\beta])$, $\phi\geq 0$, and  satisfies the following differential inequality
\begin{equation}
    \frac{1}{2}(\phi(t)^2)'+\kappa \phi(t)^2\leq \chi(t)\phi(t), \quad \phi(\alpha)=\phi_{\alpha}\geq 0.\label{eqn:general_ode} 
\end{equation}
Then for all $t\in[\alpha,\beta]$, 
\begin{equation}
    \phi(t)+\kappa\int_{\alpha}^t\phi(\tau)\,d\tau \leq\phi_{\alpha}+\int_{\alpha}^t\chi(\tau)\,d\tau.\label{eqn:l1_ode_result} 
\end{equation}
Furthermore
\begin{equation}
    \phi(t)\leq e^{-\kappa(t-\alpha)}\phi_{\alpha}+\int_{\alpha}^te^{-\kappa(t-\tau)}\chi(\tau)\,d\tau.\label{eqn:basic_ode_result}
\end{equation}
\end{lem}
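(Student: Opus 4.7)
The key issue in proving Lemma \ref{lem:basic_ode_result} is that the hypothesis is written in terms of $(\phi^2)'$ rather than $\phi'$, and while one would like to divide by $\phi$ to obtain a linear first-order inequality $\phi' + \kappa\phi \leq \chi$, the function $\phi$ is only assumed to be nonnegative and may vanish. The plan is to regularize $\phi$ to avoid this division by zero and then pass to the limit.

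For $\delta > 0$, set $\phi_\delta(t) := \sqrt{\phi(t)^2 + \delta^2}$. Then $\phi_\delta \geq \delta > 0$ and $\phi_\delta \in C^1([\alpha,\beta])$ with $\phi_\delta \phi_\delta' = \phi\phi' = \tfrac{1}{2}(\phi^2)'$. Substituting into \eqref{eqn:general_ode} and dividing by $\phi_\delta > 0$ gives
\begin{equation*}
\phi_\delta'(t) \;\leq\; \chi(t)\,\frac{\phi(t)}{\phi_\delta(t)} \;-\; \kappa\,\frac{\phi(t)^2}{\phi_\delta(t)} \;\leq\; \chi(t) \;-\; \kappa\,\frac{\phi(t)^2}{\phi_\delta(t)},
\end{equation*}
where the second bound uses $0 \leq \phi \leq \phi_\delta$ together with $\chi \geq 0$. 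This is the workhorse inequality.

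\textbf{Step toward \eqref{eqn:l1_ode_result}.} Integrating the workhorse inequality directly from $\alpha$ to $t$ yields
\begin{equation*}
\phi_\delta(t) \,+\, \kappa\int_\alpha^t \frac{\phi(\tau)^2}{\phi_\delta(\tau)}\,d\tau \;\leq\; \phi_\delta(\alpha) \,+\, \int_\alpha^t \chi(\tau)\,d\tau.
\end{equation*}
As $\delta \to 0^+$, $\phi_\delta \to \phi$ uniformly on $[\alpha,\beta]$ (since $\phi_\delta - \phi \leq \delta$), and $\phi^2/\phi_\delta \to \phi$ pointwise with $\phi^2/\phi_\delta \leq \phi$. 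Dominated convergence on the integral term then recovers \eqref{eqn:l1_ode_result}.

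\textbf{Step toward \eqref{eqn:basic_ode_result}.} For the exponential form, compute
\begin{equation*}
\frac{d}{dt}\bigl(e^{\kappa(t-\alpha)}\phi_\delta(t)\bigr) \;=\; e^{\kappa(t-\alpha)}\bigl(\phi_\delta'(t) + \kappa\phi_\delta(t)\bigr) \;\leq\; e^{\kappa(t-\alpha)}\!\left(\chi(t) + \kappa\,\frac{\phi_\delta(t)^2 - \phi(t)^2}{\phi_\delta(t)}\right) \;=\; e^{\kappa(t-\alpha)}\!\left(\chi(t) + \kappa\,\frac{\delta^2}{\phi_\delta(t)}\right).
\end{equation*}
Because $\phi_\delta \geq \delta$, the error term is bounded by $\kappa\delta\, e^{\kappa(t-\alpha)}$. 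Integrating from $\alpha$ to $t$, dividing through by $e^{\kappa(t-\alpha)}$, and sending $\delta \to 0^+$ (so that $\phi_\delta(\alpha) \to \phi_\alpha$ and the $O(\delta)$ remainder vanishes) yields \eqref{eqn:basic_ode_result}.

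The only real obstacle is the regularization argument itself; once $\phi_\delta$ is in hand, both conclusions follow from a single differential inequality by integrating directly or after multiplying by the integrating factor $e^{\kappa(t-\alpha)}$. I do not expect any sign or measurability subtleties since $\chi,\phi \in C([\alpha,\beta])$ and everything inside the integrals is nonnegative and bounded, so dominated (or monotone) convergence applies cleanly in the final limit.
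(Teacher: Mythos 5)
Your proof is correct and follows essentially the same strategy as the paper's: both regularize to avoid dividing by a possibly vanishing $\phi$ and then send $\delta\to 0^{+}$ (the paper divides by $\phi+\delta$ and absorbs the resulting logarithmic remainders, while you use $\sqrt{\phi^{2}+\delta^{2}}$, which gives slightly cleaner error terms). The only cosmetic difference is that the paper obtains the exponential bound by reducing to the $\kappa=0$ case via the substitution $\Phi=e^{\kappa t}\phi$, whereas you apply the integrating factor directly to the regularized inequality; both routes implicitly use $\kappa\geq 0$, as does the paper.
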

\begin{proof}
We prove first \eqref{eqn:l1_ode_result}. Since $\phi$ and $\chi$ are non-negative functions, it follows that for any arbitrary $\delta>0$, the  following differential inequality holds
\begin{equation}
\phi(t)\phi'(t)+\kappa \phi(t)^2\leq \chi(t)(\phi(t)+\delta),\end{equation}
dividing both sides of the inequality by $\phi+\delta$, and integrating in time, we arrive at 
\begin{align*}
    \phi(t)+\kappa\int_{\alpha}^t \phi(\tau)\,d\tau&\leq \phi_{\alpha}+\int_{\alpha}^t\chi(\tau)\,d\tau+\delta\ln\left|\frac{\phi(t)+\delta}{\phi_{\alpha}+\delta}\right|+\kappa\delta\int_{\alpha}^t \frac{\phi(\tau)}{\phi(\tau)+\delta}\,d\tau,\\
    &\leq \phi_{\alpha}+\int_{\alpha}^t\chi(\tau)\,d\tau+\delta\ln\left|\frac{\phi(t)+\delta}{\phi_{\alpha}+\delta}\right|+\kappa\delta(t-\alpha),
\end{align*}
the conclusion follows taking $\delta\to 0^+$.   

We next prove \eqref{eqn:basic_ode_result}. When $\kappa=0$ the result follows immediately from \eqref{eqn:l1_ode_result}:  
\begin{equation}
\label{eqn:kappa_zero}
    \phi(t)\leq \phi_{\alpha}+\int_{\alpha}^t\chi(\tau)\,d\tau.
\end{equation}
For the general case we multiply \eqref{eqn:general_ode} by $e^{2\kappa t}$, obtaining
\begin{equation}
    \frac{1}{2}\left[\Phi(t)^2\right]'\leq e^{\kappa t}\chi(t)\Phi(t), 
\end{equation}
where $\Phi(t)=e^{\kappa t}\phi(t)$.  Applying \eqref{eqn:kappa_zero} to $\Phi$ and undoing the transformation yields \eqref{eqn:basic_ode_result}.
\end{proof}


\end{document}